\newtheorem{theorem}{Theorem}[section]
\newtheorem{claim}[theorem]{Claim}
\newtheorem{proposition}[theorem]{Proposition}
\newtheorem{lemma}[theorem]{Lemma}
\newtheorem{corollary}[theorem]{Corollary}
\theoremstyle{definition}
\newtheorem{definition}[theorem]{Definition}
\newtheorem{example}[theorem]{Example}
\newtheorem{remark}[theorem]{Remark}
\DeclareMathOperator{\supp}{\mathrm{supp}}
\newcommand{\N}{\mathbb{N}}
\newcommand{\T}{\mathbb{T}}
\newcommand{\Z}{\mathbb{Z}}
\newcommand{\Q}{\mathbb{Q}}
\newcommand{\R}{\mathbb{R}}
\newcommand{\I}{\mathbb{I}}
\newcommand{\eps}{\varepsilon}
\def\uu{\mathbf{u}}
\renewcommand{\I}{\mathcal I}
\newcommand{\F}{\mathcal F}
\newcommand{\A}{\mathcal A}
\newcommand{\B}{\mathcal B}
\newcommand{\D}{\mathcal D}
\renewcommand{\P}{\mathcal P}
\renewcommand{\SS}{\mathcal S}
\newcommand{\bx}{\mathrm{(b}_x\mathrm{)}}
\newcommand{\ax}{\mathrm{(a}_x\mathrm{)}}
\newcommand{\axuno}{\mathrm{(a1}_x\mathrm{)}}
\newcommand{\axdue}{\mathrm{(a2}_x\mathrm{)}}
\newcommand{\ix}{\mathrm{(i}_x\mathrm{)}}
\newcommand{\iix}{\mathrm{(ii}_x\mathrm{)}}
\newcommand{\unox}{\mathrm{(1}_x\mathrm{)}}
\newcommand{\duex}{\mathrm{(2}_x\mathrm{)}}
\newcommand{\trex}{\mathrm{(3}_x\mathrm{)}}
\newcommand{\Ix}{\mathrm{(I}_x\mathrm{)}}
\newcommand{\IIx}{\mathrm{(II}_x\mathrm{)}}
\newcommand{\DL}{\mathrm{(DL)}}
\newcommand{\DLu}{\mathrm{(DL_\uu)}}
\numberwithin{equation}{section}
\newcommand{\NB}
{${\clubsuit}\;$}
\newcommand{\NBA}
{${\spadesuit}$}
\newlength{\bibitemsep}\setlength{\bibitemsep}{.0\baselineskip plus .0\baselineskip minus .0\baselineskip}
\newlength{\bibparskip}\setlength{\bibparskip}{0pt}
\let\oldthebibliography\thebibliography
\renewcommand\thebibliography[1]{%
  \oldthebibliography{#1}%
  \setlength{\parskip}{\bibitemsep}%
  \setlength{\itemsep}{\bibparskip}%
}
\title{Element-wise description of the $\I$-characterized subgroups of the circle}
\date{\scriptsize Department of Mathematics, Computer Science and Physics - University of Udine}
\author{R. Di Santo\\ \href{mailto:raffaele.disanto@uniud.it}{\scriptsize raffaele.disanto@uniud.it}  
\and D. Dikranjan \\ \href{mailto:dikran.dikranjan@uniud.it}{\scriptsize dikran.dikranjan@uniud.it}
\and A. Giordano Bruno \\ \href{mailto:anna.giordanobruno@uniud.it}{\scriptsize anna.giordanobruno@uniud.it}
\and H. Weber \\ \href{mailto:hans.weber@uniud.it}{\scriptsize hans.weber@uniud.it}}
\begin{document}
\maketitle

\begin{abstract}
According to Cartan, given an ideal $\I$ of $\N$, a sequence $(x_n)_{n\in\N}$ in the circle group $\T$ is said to {\em $\I$-converge} to a point $x\in \T$ if $\{n\in \N: x_n \not \in U\}\in \I$ for every neighborhood $U$ of $x$ in $\T$. For a sequence $\uu=(u_n)_{n\in\N}$ in $\Z$, let 
$$t_\uu^\I(\T) :=\{x\in \T: u_nx \ \text{$\I$-converges to}\ 0 \}.$$ 
This set is a Borel (hence, Polishable) subgroup of $\T$ with many nice properties, largely studied in the case when $\I = \F in$ is the ideal of all finite subsets of $\N$
(so $ \F in$-convergence coincides with the usual one) for its remarkable connection to topological algebra, descriptive set theory and harmonic analysis. 
We give a complete element-wise description of $t_\uu^\I(\T)$ when $u_n\mid u_{n+1}$ for every $n\in\N$ and under suitable hypotheses on $\I$. 
In the special case when $\I =\F in$, we obtain an alternative proof of a simplified version of a known
result from \cite{DI}. 
\end{abstract}

\medskip
\hrule

\smallskip
\noindent {\scriptsize Keywords: topologically torsion element, characterized subgroup, $\I$-convergence, statistical convergence, arithmetic sequence.\\
MSC2020: 54H11, 40A35, 20K45, 22A10, 11J71.}

\scriptsize
\tableofcontents

\normalsize
\section{Introduction}

Let $\T=\R/\Z$ be the circle group denoted additively, and let $\varphi\colon \R\to\T$, $x\mapsto \bar x:=\varphi(x)$, be the canonical projection. 
Following \cite{BDS}, a subgroup $H$ of $\T$ is said to be \emph{characterized} by a sequence $\uu = (u_n)$ of integers if $H=\{x\in\T: u_nx\to 0\}$. The introduction of these subgroups was motivated by problems arising from various areas of Mathematics. 
We give below a brief overview of the history of these subgroups related to the topic of the present paper; for more detail, see the surveys \cite{DDG,D1}.

\subsection{Historical background}

The roots of this topic are in the following notions appearing in Bracconier \cite{Bra}, Vilenkin \cite{Vi}, Robertson \cite{Ro}, Armacost \cite{Arm}:
an element $x$ of a topological abelian group $G$ is:
\begin{itemize}
  \item[(a)] {\em topologically $p$-torsion}, for a prime $p$, if $p^nx\to 0$;
  \item[(b)] {\em topologically torsion} if $n!x\to 0$.
 \end{itemize}
For a prime $p$, the \emph{topologically $p$-torsion subgroup} of $G$ is ${t_{\underline p}(G):=\{x\in G: p^n x\to 0\}.}$
Similarly, the \emph{{topologically torsion subgroup}} of $G$ is ${G!:=\{x\in G: n! x\to 0\}.}$
 These notions generalize the classical ones of $p$-torsion subgroup $t_p(G)$ for a prime $p$, and torsion subgroup $t(G)$ of an abelian group $G$.
Clearly,  $t_p(G)\subseteq t_{\underline p}(G)$ for every prime $p$, and $t(G)\subseteq G!$.

\begin{example}\label{Exa:arm}
Armacost \cite{Arm} observed the following non-trivial facts: 
\begin{enumerate}[(a)]
  \item $t_{\underline p}(\T)=t_p(\T)=\Z(p^\infty)$ for every prime $p$;
  \item $\bar e\in \T!$, but $\bar e\not\in t(\T)=\Q/\Z$, where $e=\lim_{n\to +\infty} \left(1 + \frac{1}{n}\right)^n\in \R$ is the Euler number.
\end{enumerate}
\end{example}

Example~\ref{Exa:arm}(b) shows that the topologically torsion subgroup $\T!$ may be much more complicated compared to the torsion subgroup $\Q/\Z$ of $\T$. This is why Armacost \cite{Arm} posed the problem to describe the subgroup $\T!$ of $\T$.

\smallskip
The following definition generalizes the above ones of topological $p$-torsion and topological torsion.

\begin{definition}[{\cite{DPS,D1}}]\label{def:top:tor}
For a sequence $\uu=(u_n)$ in $\Z$ and a topological abelian group $G$, an element $x\in G$ is
\emph{{topologically $\uu$-torsion}} if $u_n x\to 0$ in $G$. The \emph{{topologically $\uu$-torsion subgroup}} of $G$ is
${t_\uu(G):=\{x\in G: u_n x\to 0\}}$.
\end{definition}

\begin{enumerate}[(a)]
  \item For a prime $p$, $t_{\underline p}(G)=t_\mathbf p(G)$, where $\mathbf p=(p^n)$;
  \item $G!=t_\mathbf n(G)$, where $\mathbf n=((n+1)!)$.
\end{enumerate}

Clearly, for $G=\T$, the topologically $\uu$-torsion subgroup $t_\uu(\T)$ is the subgroup of $\T$ characterized by $\uu$.

\smallskip
Definition \ref{def:top:tor} was given in \cite[\S 4.4.2]{DPS} only for arithmetic sequences, as $\mathbf p$ and $\mathbf n$ share this property:
recall that a sequence $\uu = (u_n)$ of integers is \emph{arithmetic} if it is strictly increasing, $u_0=1$ and $u_n\mid u_{n+1}$ for every $n\in\N$. Let $\A$ denote the family of all arithmetic sequences. 
For $\uu\in\A$, let $b^\uu_n:={u_{n}}/{u_{n-1}}$ for every $n\in\N_+$ and $\mathbf b^\uu:=(b_n^\uu)$; when it is clear in the context we simply write $b_n=b^\uu_n$ and $\mathbf b=\mathbf b^\uu$. In these terms $$u_1 = b_1,\ u_2= b_1b_2,\ \ldots,\ u_{n+1}= b_{n+1}b_n \ldots b_1 = b_{n+1}u_n,\ \ldots.$$ 

Each arithmetic sequence $\uu$ gives rise to a nice representation: for every $x\in [0,1)$, there exists a unique sequence $(c_n^\uu(x))_{n\in\N_+}$ in $\N$ such that
\begin{equation}\label{ex-4}
{x= \sum_{n=1}^\infty\frac{c_n^\uu(x)}{u_n},}
\end{equation}
with $c_n^\uu(x)<b^\uu_n$ for every $n\in\N_+$, and $c_n^\uu(x)<b^\uu_n-1$ for infinitely many $n\in\N_+$.
When no confusion is possible, we shall write $c_n^\uu$, $c_n(x)$ or simply  $c_n$ in place of $c_n^\uu(x)$. For $x\in[0,1)$, with canonical representation~\eqref{ex-4}, let 
$$\supp(x):=\{n\in\N_+: c_n\neq0\}\quad\text{and}\quad  \supp_b(x):=\{n\in\N_+: c_n=b_n-1\}.$$ 
Clearly, $\supp_b(x)\subseteq\supp(x)$ and $\supp_b(x)$ cannot be cofinite by definition. 

\smallskip
The following solution to Armacost problem was obtained independently by Borel \cite{Bo2}, and somewhat earlier by  Dikranjan, Prodanov and Stoyanov \cite{DPS}
(although the latter solution was not complete and was completed subsequently in \cite{DiD} -- see below the comment on the general case). 

\begin{theorem}[{\cite{Bo2,DiD}}]\label{Prob:A} Let $x\in[0,1)$. Then $\bar x\in\T!=t_\mathbf n(\T)$ if and only if $\varphi\left(\frac{c_n}{n+1}\right)\to 0$.
\end{theorem}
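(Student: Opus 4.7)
Since $\mathbf n = ((n+1)!)$ is arithmetic with $b_n = n+1$, the canonical expansion~\eqref{ex-4} of $x\in[0,1)$ reads $x = \sum_{k\ge 1} c_k/(k+1)!$ with $0\le c_k\le k$ and $c_k<k$ for infinitely many $k$. Membership $\bar x\in\T!=t_\mathbf{n}(\T)$ amounts to $(n+1)!\,x\to 0$ in $\T$, equivalently $n!\,x\to 0$ in $\T$ (a mere index shift). The plan is to compute $n!\,x$ modulo $1$ directly from the expansion and isolate the contribution of the single digit $c_n$.

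\textbf{Explicit reduction.} Multiplying the series by $n!$ and splitting it at $k=n$,
\[
n!\,x \;=\; \sum_{k=1}^{n-1} c_k\,\frac{n!}{(k+1)!} \;+\; \frac{c_n}{n+1} \;+\; T_n, \qquad T_n := \sum_{k=n+1}^{\infty} c_k\,\frac{n!}{(k+1)!}.
\]
Each summand of the finite sum is an integer, since $(k+1)!\mid n!$ for $k\le n-1$. Hence
\[
n!\,x \;\equiv\; \frac{c_n}{n+1} + T_n \pmod 1.
\]

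\textbf{Tail estimate.} Using $c_k\le k$ and the telescoping identity $k/(k+1)! = 1/k! - 1/(k+1)!$,
\[
T_n \;\le\; n!\sum_{k=n+1}^{\infty}\frac{k}{(k+1)!} \;=\; \frac{n!}{(n+1)!} \;=\; \frac{1}{n+1},
\]
with strict inequality because $c_k<k$ for infinitely many $k\ge n+1$. Combined with $c_n/(n+1)\le n/(n+1)$, this forces $c_n/(n+1)+T_n\in[0,1)$, so the displayed quantity is literally the fractional part of $n!\,x$ in $\R$.

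\textbf{Conclusion.} Since $T_n\to 0$ in $\R$, the conditions $\varphi(n!\,x)\to 0$ in $\T$ and $\varphi(c_n/(n+1))\to 0$ in $\T$ are equivalent, which yields the theorem. The proof relies on no deep ingredient; the only delicate point is the sharp bound $T_n<1/(n+1)$, which is provided by the telescoping identity together with the canonical requirement that $c_k<k$ for infinitely many $k$.
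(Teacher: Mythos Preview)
Your proof is correct. The key computation $\{n!\,x\}=c_n/(n+1)+T_n$ with $0\le T_n<1/(n+1)$ is precisely what is needed, and the telescoping bound together with the canonical condition $c_k<k$ infinitely often gives the strict inequality required to identify the fractional part.

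The paper takes a different route: rather than computing directly, it obtains Theorem~\ref{Prob:A} (and more generally Theorem~\ref{Thm:DPS}(b)) as an application of Corollary~\ref{IcB}, which in turn rests on the main Theorem~\ref{conjecture}. Since $\mathbf n$ is $b$-divergent ($b_n=n+1\to\infty$), one has $\B_{\mathbf n}=\F in$, so with $\I=\F in$ the hypothesis $\B_\uu\subseteq\I$ of Corollary~\ref{IcB} is satisfied and the characterization reduces to $\bx$, which for $\I=\F in$ collapses to $\varphi(c_n/b_n)\to 0$. Your argument is self-contained and elementary---essentially the classical proof from \cite{Bo2,DiD}---whereas the paper's derivation illustrates how the special case falls out of the general $\I$-framework. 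Each has its virtue: yours gives immediate insight into why the factorial sequence behaves so cleanly (the tail is dominated by a single term), while the paper's shows that no ad hoc calculation is needed once the general machinery is in place.
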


On the other hand, the torsion subgroup $\Q/\Z$ of $\T$ can be obtained as a topologically $\uu$-torsion subgroup as follows:

\begin{example}[\cite{DK,Bogdanovic}]\label{Exa:DK} 
With $\uu$ the sequence 
$({1!},{2!}, 2\cdot 2!, {3!}, 2\cdot 3!, 3\cdot 3!, {4!}, \ldots, {n!}, 2\cdot n!, 3\cdot n!, \ldots, n\cdot n!, {(n+1)!}, \ldots)$, we have $\Q/\Z=t_\uu(\T)$. Similarly, one can characterize 
arbitrary subgroups of $\Q/\Z$. 
\end{example}

Following~\cite{DI}, fixed $\uu\in\A$, call an infinite  subset $A$ of $\N$ \emph{$b$-bounded} (resp., \emph{$b$-divergent}) if the sequence $(b_n)_{n\in A}$ is bounded (resp., diverges to infinity).
In particular, call $\uu$ \emph{$b$-bounded} (resp.,  \emph{$b$-divergent}) if $\N_+$ is $b$-bounded (resp., $b$-divergent).

\begin{theorem}[{\cite{DPS,DiS}}]\label{Thm:DPS} 
Let $\uu\in\A$ and $x\in[0,1)$.
\begin{enumerate}[(a)]
\item If $\uu$ is $b$-bounded, then $\bar x\in t_\uu(\T)$ if and only if $(c_n)\ \text{is eventually}\ 0.$
\item If $\uu$ is $b$-divergent, then $\bar x\in t_\uu(\T)$ if and only if $\varphi\left(\frac{c_n}{b_n}\right)\to 0$.
\end{enumerate}
\end{theorem}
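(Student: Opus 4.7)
The plan is to reduce $u_n x \to 0$ in $\T$ to an explicit statement on the digits $(c_n)$ of \eqref{ex-4} via an explicit formula for $u_n x \bmod 1$. Since $u_n/u_k \in \Z$ for $k\le n$, multiplying the series $x = \sum_{k\ge 1} c_k/u_k$ by $u_n$ gives an integer plus the tail, so
$$u_n x \equiv T_n := \sum_{k > n} \frac{c_k}{b_{n+1}\cdots b_k} \pmod 1.$$
A routine telescoping using $c_k \le b_k - 1$, together with the non-cofiniteness of $\supp_b(x)$, yields $T_n \in [0,1)$ and the recursion
$$T_n = \frac{c_{n+1}+T_{n+1}}{b_{n+1}}\in\left[\frac{c_{n+1}}{b_{n+1}},\,\frac{c_{n+1}+1}{b_{n+1}}\right).$$
Consequently, $\bar x\in t_\uu(\T)$ if and only if $\|T_n\|_\T\to 0$, where $\|\cdot\|_\T$ denotes the distance to the nearest integer. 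All remaining work is to translate this in the two regimes.

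Part (b) is then essentially free: $b_n\to \infty$ forces $|T_n - c_{n+1}/b_{n+1}|<1/b_{n+1}\to 0$, so $\|T_n\|_\T$ and $\|c_{n+1}/b_{n+1}\|_\T$ differ by a null sequence; up to an index shift this is precisely $\varphi(c_n/b_n)\to 0$.

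For Part (a), the ``if'' direction is clear (eventually $c_n=0$ makes $T_n$ eventually vanish). For the converse, assume $b_n\le M$ and that $c_n\ne 0$ infinitely often; the goal is $\|T_n\|_\T\not\to 0$. I would first handle the easy subcase where $1\le c_{n+1}\le b_{n+1}-2$ for infinitely many $n$: for such $n$ the recursion immediately gives $T_n \in [1/M,\,1-1/M]$, hence $\|T_n\|_\T\ge 1/M$. Otherwise, eventually $c_n\in\{0,b_n-1\}$, and the non-cofiniteness of $\supp_b(x)$ forces \emph{both} values $0$ and $b_n-1$ to occur infinitely often. This is the main obstacle: I would exploit the resulting infinitely many ``transitions'' $n$ with $c_{n+1}=0$ and $c_{n+2}=b_{n+2}-1$ — whose existence follows from the observation that a cofinite set partitioned into two infinite subsets must swap blocks infinitely often — to show that at each such $n$ the recursion applied twice gives $T_{n+1}\ge (b_{n+2}-1)/b_{n+2}\ge 1/2$ and hence $T_n = T_{n+1}/b_{n+1}\in [1/(2M),\,1/2]$, so $\|T_n\|_\T\ge 1/(2M)$, contradicting convergence.
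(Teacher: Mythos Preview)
Your proof is correct. The tail recursion $T_n=(c_{n+1}+T_{n+1})/b_{n+1}$ you set up is exactly \eqref{eqn:ug6} of Lemma~\ref{ug10}, and your case split in part~(a) --- first disposing of ``intermediate'' digits $1\le c_{n+1}\le b_{n+1}-2$, then in the residual case $c_n\in\{0,b_n-1\}$ locating infinitely many $0\!\to\!(b-1)$ transitions to trap $T_n$ in $[1/(2M),1/2]$ --- is clean and self-contained. The paper, however, does not prove Theorem~\ref{Thm:DPS} directly: it is quoted from \cite{DPS,DiS}, and the text only notes that part~(b) is recovered from Corollary~\ref{IcB} (with $\I=\F in$, so $\B_\uu=\F in\subseteq\I$ when $\uu$ is $b$-divergent), while part~(a) falls out of the $\I=\F in$ instance of Theorem~\ref{conjecture} via Theorem~\ref{DiD} (taking $A=\supp(x)$ there forces $\supp(x)\subseteq^*\supp_b(x)$ and $\supp(x)+1\subseteq^*\supp(x)$, contradicting non-cofiniteness of $\supp_b(x)$). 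Your route is thus genuinely more elementary for this particular statement: it bypasses the ideal machinery and the universal quantification over $A\notin\I$ in $\ax$, $\bx$, trading generality for a short explicit computation. The paper's route buys uniformity --- one proof of Theorem~\ref{conjecture} covers Theorems~\ref{Prob:A}, \ref{Thm:DPS}, \ref{DiD}, \ref{Th:DG} and \ref{Theorem2.9} at once --- at the cost of invoking heavier structure than this special case needs.
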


Item (b) was not complete in \cite[\S 4.4.2, Theorem]{DPS}, as only the stronger condition $\frac{c_n}{b_n}\to 0$ was considered, missing in this way the elements $\bar x\in t_\uu(\T)$ with $\varphi\left(\frac{c_n}{b_n}\right)\to 0$, but $\frac{c_n}{b_n}\not \to 0$. This gap was filled in \cite{DiS}. 

\smallskip
The following complete description of the $\uu$-torsion elements of $\T$ for an arbitrary arithmetic sequence $\uu$ was obtained in \cite{DI} 
(correcting a previous incomplete description from~\cite{DiD}).  For subsets $A,B$ of $\N$, we say that $A$ is \emph{almost contained} in $B$
whenever $|A\setminus B|< \infty$ and we write $A\subseteq^* B$.

\begin{theorem}[{\cite{DI}}]\label{DiD} Let $\uu\in\mathcal A$ and $x\in[0,1)$. Then $\bar x \in t_{\uu}(\T)$ if and only if either $\supp(x)$ is finite or, if $\supp(x)$ is infinite, then for all infinite $A\subseteq\N$ the following holds.
\begin{itemize}
   	\item[$\mathrm{(a)}$] If $A$ is $b$-bounded, then:
\begin{itemize}		
	\item[$\mathrm{(1)}$] if $A\subseteq^*\supp(x)$,  then $A+1 \subseteq^*\supp(x)$, $A\subseteq^*\supp_b(x)$ and $\lim\limits_{n\in A}{\frac{c_{n+1}+1}{b_{n+1}}}=1$.
\\ Moreover, if $A+1$ is $b$-bounded, then $A+1\subseteq^*\supp_b(x)$ as well;
	\item[$\mathrm{(2)}$] if $A\cap\supp(x)$ is finite, then $\lim\limits_{n\in A}{\frac{c_{n+1}}{b_{n+1}}}=0$.
	\\ Moreover, if $A+1$ is $b$-bounded, then $(A+1)\cap\supp(x)$ is finite as well.
\end{itemize}
	\item[$\mathrm{(b)}$] If $A$ is $b$-divergent, then $\lim\limits_{n\in A}{\varphi\left(\frac{c_n}{b_n}\right)}=\lim\limits_{n\in A}{\varphi\left(\frac{c_n+1}{b_n}\right)}=0$.
\end{itemize}
\end{theorem}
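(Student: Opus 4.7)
The plan is to start from the residue formula: writing $x=\sum_{k\ge 1}c_k/u_k$, the facts $u_n/u_k\in\Z$ for $k\le n$ and $u_n/u_k=1/(b_{n+1}\cdots b_k)$ for $k>n$ yield
$$u_n x\equiv r_n(x)\pmod 1,\qquad r_n(x):=\sum_{k>n}\frac{c_k}{b_{n+1}\cdots b_k}\in\left[\frac{c_{n+1}}{b_{n+1}},\frac{c_{n+1}+1}{b_{n+1}}\right),$$
with the recursion $r_n(x)=(c_{n+1}+r_{n+1}(x))/b_{n+1}$. The dual identity $r_n(x)=1-\rho_n(x)$, where $\rho_n(x):=\sum_{k>n}(b_k-1-c_k)/(b_{n+1}\cdots b_k)$, telescopes to $\rho_n(x)\le 1/(b_{n+1}\cdots b_{n+K})\le 2^{-K}$ whenever $c_{n+j}=b_{n+j}-1$ for $j=1,\ldots,K$; this will be the tool for turning ``digits saturate'' into $r_n(x)\to 1$. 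Since $\bar x\in t_\uu(\T)$ is by definition $\varphi(r_n(x))\to 0$ and the $\supp(x)$-finite case is immediate (eventually $r_n(x)=0$), the work is in the infinite-support case.

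For necessity, I would fix any infinite $A\subseteq\N$ and analyse $r_{n-1}(x)\in[c_n/b_n,(c_n+1)/b_n)$ for $n\in A$. In case~(b), $b_n\to\infty$ on $A$ collapses the containing interval, so $\varphi(r_{n-1}(x))\to 0$ is equivalent to $\varphi(c_n/b_n)\to 0$ and to $\varphi((c_n+1)/b_n)\to 0$. In case~(a) with $b_n\le M$ on $A$, if $A\subseteq^*\supp(x)$ then $r_{n-1}(x)\ge 1/M$ forces $r_{n-1}(x)\to 1$; the bounded-denominator rationality of $(c_n+1)/b_n\in(0,1]$ tending to $1$ forces $c_n=b_n-1$ eventually, i.e.\ $A\subseteq^*\supp_b(x)$; the recursion then propagates to $r_n(x)\to 1$ and hence $(c_{n+1}+1)/b_{n+1}\to 1$, while $c_{n+1}=0$ on an infinite subset would give $(c_{n+1}+1)/b_{n+1}=1/b_{n+1}\le 1/2$, contradicting this convergence, so $A+1\subseteq^*\supp(x)$; when $A+1$ is itself $b$-bounded, the same rationality argument upgrades this to $A+1\subseteq^*\supp_b(x)$. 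The symmetric analysis in case (a)(2), where $r_{n-1}(x)$ must tend to $0$, yields $c_{n+1}/b_{n+1}\to 0$, strengthened to $c_{n+1}=0$ eventually when $A+1$ is $b$-bounded.

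For sufficiency, I would use that $\varphi(r_n(x))\to 0$ iff every infinite $B\subseteq\N$ contains an infinite $B'$ on which $\varphi(r_n(x))\to 0$. A diagonal extraction produces $B'\subseteq B$ on which, for every $k\ge 1$, $(b_{n+k})_{n\in B'}$ is bounded or $\to\infty$ and $(c_{n+k})_{n\in B'}$ is constantly $0$ or always nonzero; the iterated conclusions of (a)(1)/(a)(2) force the zero/nonzero branch chosen at level $1$ to persist at higher $k$ while $b$ stays bounded. If $k$ is the first level at which $b_{n+k}\to\infty$ on $B'$, then iterated (a)(1) has already forced $(c_{n+k}+1)/b_{n+k}\to 1$ (respectively, iterated (a)(2) has forced $c_{n+k}/b_{n+k}\to 0$), which together with $b_{n+k}\to\infty$ gives $r_{n+k-1}(x)\to 1$ (resp.\ $\to 0$); the backwards recursion $r_{j-1}=(c_j+r_j)/b_j$ for $j=n+k-1,\ldots,n+1$, using $c_{n+j}=b_{n+j}-1$ (resp.\ $=0$) and $b_{n+j}$ bounded, propagates the convergence to $\varphi(r_n(x))\to 0$ on $B'$. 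If no such level exists, the cascade yields $c_{n+k}=0$ for all $k$ (so $r_n(x)=0$ eventually) or $c_{n+k}=b_{n+k}-1$ for all $k$, in which case the telescoping bound $\rho_n(x)\le 2^{-K}$ for every $K$ forces $r_n(x)\to 1$.

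The hardest part will be executing the diagonal extraction cleanly and keeping the ``close to $0$'' and ``close to $1$'' branches coherent through the backwards propagation: the first branch must be paired with $c_{n+j}=0$ at every intermediate level (so that $r_{j-1}=r_j/b_{j-1}$ preserves proximity to $0$), while the second must be paired with $c_{n+j}=b_{n+j}-1$ (so that $r_{j-1}=1-(1-r_j)/b_{j-1}$ preserves proximity to $1$), and the consistency of these pairings across all levels is precisely what the iterated conditions (a)(1) and (a)(2), together with their ``moreover'' clauses applied on the bounded-level chain, guarantee.
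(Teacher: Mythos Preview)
Your argument is correct and is essentially the original \cite{DI} proof, which the paper also reproduces in its Appendix (under the extra hypothesis $\DLu$, satisfied by $\F in$). The necessity part matches Proposition~\ref{necessity} specialized to $\I=\F in$. For sufficiency, your key move is an upfront diagonal extraction producing a single infinite $B'\subseteq B$ on which every $(b_{n+k})$ and $(c_{n+k})$ has definite behavior, after which you cascade (a)(1)/(a)(2) through the bounded levels until (if ever) a $b$-divergent level is reached. The paper's \emph{main} proof (Lemmas~\ref{S3} and~\ref{S4}) takes a genuinely different route: arguing by contradiction from $\neg(\mathrm U_A)$, it builds a decreasing chain $A_k\subseteq_\I A$ one level at a time, using the very failure of $(\mathrm U_A)$ (Claims~\ref{DD} and~\ref{DDT}) to force $b$-boundedness at the next stage, and only then extracts a diagonal sequence $n_k\in A_k$. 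What that buys is generality to arbitrary translation invariant free $P$-ideals: your diagonal $B'$ is automatically infinite and hence outside $\F in$, but for a general $\I$ it may land in $\I$ and become useless---precisely the flaw the paper identifies in \cite{DG1,Ghosh} (Remark~\ref{implies*}) and the reason the Appendix requires $\DLu$ (which $\I_d$ fails, Example~\ref{Laaaaast:Example}). For the $\F in$-statement itself your route is perfectly adequate and arguably more transparent. Two cosmetic points: after diagonalization the level-$k$ properties hold only for cofinitely many $n\in B'$, not literally all (so ``$r_n(x)=0$ eventually'' in the all-zero branch should read ``$r_n(x)\to 0$'' via the $2^{-K}$ bound); and when the first $b$-divergent level is $k=1$ there is no ``iterated (a)'' yet, so one invokes (b) directly on $B'+1$.
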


The characterization provided  in Theorem~\ref{DiD} simplifies essentially imposing on $\uu$ the following additional property  (see Corollary \ref{splitting}).

\begin{definition}[{\cite[Definition 3.10]{DI}}]\label{splittingdef}
Let $\uu\in\A$. An infinite set $A\subseteq \N_+$ is said to have the {\em $\uu$-splitting property} if there exists a partition $A=B \sqcup D$ such that: 
\begin{itemize}
    \item[(a)] $B$ and $D$ are either empty or infinite;
    \item[(b)] if $B$ is infinite, then $B$ is $b$-bounded;
    \item[(c)] if $D$ is infinite, then $D$ is $b$-divergent. 
\end{itemize} 
We say that  $\uu\in\A$ has the {\em splitting property} if $\N$ has the $\uu$-splitting property.
\end{definition}

\begin{corollary}[{\cite[Corollary 3.13]{DI}}]\label{splitting} Suppose that $\uu\in\A$ has the splitting property witnessed by the partition $\N_+=B\sqcup D$, and let $x\in[0,1)$ with $S=\supp(x)$ and $S_b=\supp_b(x)$. Then $\bar x\in t_\uu(\T)$ if and only if:
\begin{itemize}
\item[(1)] $(B\cap S)+1 \subseteq^* S$, $B\cap S \subseteq^* S_b$ and if $B\cap S$ is infinite then $\lim\limits_{n \in B\cap S}\frac{c_{n+1}+1}{b_{n+1}}=1$; 
   \item[(2)] if $B\setminus S$ is infinite, then $\lim\limits_{n \in B\setminus S}\frac{c_{n+1}}{b_{n+1}}=0$;
   \item[(3)] if $D\cap S$ is infinite, then $\lim\limits_{n \in D\cap S} \varphi\left(\frac{c_n}{b_n}\right)=0$.
\end{itemize} 
\end{corollary}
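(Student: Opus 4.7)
The plan is to deduce the corollary directly from Theorem~\ref{DiD} by exploiting the splitting $\N_+ = B \sqcup D$. The key observation is that every infinite $b$-bounded $A \subseteq \N$ satisfies $A \subseteq^* B$ (since any infinite subset of $D$ is $b$-divergent) and, symmetrically, every infinite $b$-divergent $A$ satisfies $A \subseteq^* D$. This reduces the universal quantifier in Theorem~\ref{DiD} to a finite list of canonical test sets: $B \cap S$, $B \setminus S$, and $D \cap S$.

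For the forward direction, I assume $\bar x \in t_\uu(\T)$ and apply Theorem~\ref{DiD} in turn to these three sets. If $B \cap S$ is infinite, it is $b$-bounded and trivially almost contained in $S$, so Theorem~\ref{DiD}(a)(1) applied to $A := B \cap S$ yields all three assertions forming condition~(1). If $B \setminus S$ is infinite, it is $b$-bounded with $A \cap S = \emptyset$, so Theorem~\ref{DiD}(a)(2) yields condition~(2). If $D \cap S$ is infinite, it is $b$-divergent, so Theorem~\ref{DiD}(b) yields $\lim_{n \in D \cap S}\varphi(c_n/b_n) = 0$, which is condition~(3).

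For the backward direction, I assume (1), (2), (3), fix an arbitrary infinite $A \subseteq \N$, and verify the two clauses of Theorem~\ref{DiD}. Suppose first that $A$ is $b$-bounded, so $A \subseteq^* B$. If $A \subseteq^* S$, then $A \subseteq^* B \cap S$, and condition~(1) transports to $A$ the relations $A + 1 \subseteq^* S$, $A \subseteq^* S_b$, and $\lim_{n \in A}(c_{n+1}+1)/b_{n+1} = 1$; the extra clause ``$A+1 \subseteq^* S_b$ when $A+1$ is $b$-bounded'' in Theorem~\ref{DiD}(a)(1) follows because then $A + 1 \subseteq^* B$, hence $A+1 \subseteq^* B \cap S \subseteq^* S_b$. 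If instead $A \cap S$ is finite, then $A \setminus S \subseteq^* B \setminus S$, so condition~(2) gives $\lim_{n \in A}c_{n+1}/b_{n+1} = 0$; when $A+1$ is $b$-bounded, $(b_{n+1})_{n \in A}$ is bounded above, forcing $c_{n+1} = 0$ eventually and yielding the matching ``Moreover'' clause. Now suppose $A$ is $b$-divergent, so $A \subseteq^* D$; for $n \in A \setminus S$ one has $c_n = 0$, so $\varphi(c_n/b_n) = 0$ and $\varphi((c_n+1)/b_n) = \varphi(1/b_n) \to 0$ since $b_n \to \infty$ along $A$, while on the (possibly infinite) set $A \cap S \subseteq^* D \cap S$, condition~(3) delivers $\varphi(c_n/b_n) \to 0$, and adding $\varphi(1/b_n) \to 0$ produces $\varphi((c_n+1)/b_n) \to 0$.

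The main delicate point is the verification of the two ``Moreover'' clauses of Theorem~\ref{DiD}, which are absent from the corollary's streamlined statement and must be extracted from conditions (1) and (2) combined with the $b$-boundedness of $A+1$; everything else is a direct transfer of the three canonical conditions along the decomposition $A = (A \cap B) \sqcup (A \cap D)$ provided by the splitting hypothesis.
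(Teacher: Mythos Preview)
Your proof is correct. The reduction of Theorem~\ref{DiD}'s universal quantifier to the three canonical test sets $B\cap S$, $B\setminus S$, $D\cap S$ via the splitting $\N_+=B\sqcup D$ is exactly the right idea, and you handle the two ``Moreover'' clauses cleanly.

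The paper takes a different route: it obtains Corollary~\ref{splitting} as the special case $\I=\F in$ of Corollary~\ref{ThGh:May29}, which in turn is deduced from the main Theorem~\ref{conjecture} together with Proposition~\ref{unoduetre} (the equivalence $\axuno\Leftrightarrow\unox$, $\axdue\Leftrightarrow\duex$, $\bx\Leftrightarrow\trex$ under the $\I$-splitting property). The underlying combinatorics in Proposition~\ref{unoduetre} is essentially the same reduction you carry out, only phrased in the general $\I$-language and then specialized. Your argument is more self-contained and stays entirely within the classical $\F in$ setting, working directly from Theorem~\ref{DiD}; the paper's route is less direct here but has the advantage of exhibiting Corollary~\ref{splitting} as an instance of a uniform $\I$-level statement. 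Both approaches ultimately hinge on the same observation: any infinite $b$-bounded set is almost contained in $B$ and any infinite $b$-divergent set is almost contained in $D$.
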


\subsection{$\I$-characterized subgroups of $\T$}\label{I-conv}
In the last few years some new trends appeared in the field of characterized subgroups, based on weaker notions of convergence. See \cite{BCsurvey} for a recent survey on the topic. 

\smallskip
For $A\subseteq\N$, let $A(n):=\{i\in A: i\leq n\}=A\cap[0,n].$ Following \cite{BDP} (see~\cite{BDH}), fixed $\alpha\in(0,1]$, the \emph{upper natural density of order $\alpha$} is 
$$
\overline d_\alpha(A):=\limsup_{n\to\infty}\frac{|A(n)|}{n^\alpha}.
$$
For $\alpha=1$, $d:= \overline d_1$ is the classical upper natural density. As in~\cite{BDH}, a sequence $(x_n)$ in $\T$ is said to
\emph{$\alpha$-statistically converge} to $x\in \T$, denoted by $x_n\overset{s_\alpha}\longrightarrow x$, if for every neighborhood $U$ of $x$ in $\T$, $d_\alpha(\{n\in\N: x_n\not\in U\})=0$. Moreover, for a sequence of integers $\uu$ and $\alpha\in(0,1]$, let 
$$
t_\uu^\alpha(\T):=\{x\in\T: u_nx\overset{s_\alpha}\longrightarrow 0\}.
$$
A subgroup $H$ of $\T$ is \emph{$\alpha$-statistically characterized} if there exists a sequence of integers $\uu$ such that $H=t_\uu^\alpha(\T)$; the elements of $t_\uu^\alpha(\T)$ are called \emph{topologically $\uu_\alpha$-torsion elements of $\T$}. 
The $1$-statistically characterized subgroups $t_\uu^s(\T):=t_\uu^1(\T)$ of $\T$ were introduced and studied in the seminal paper~\cite{DDB} under the name statistically characterized subgroups.

An element-wise description, in the spirit of Theorem~\ref{DiD}, of $t_\uu^\alpha(\T)$ for $\uu\in\mathcal A$ is given in~\cite{DG1}, but the proof of the theorem presents a gap (see Remark~\ref{implies*}).

\smallskip
The $\alpha$-statistical convergence and the usual one are particular cases of the ideal convergence, introduced by Cartan~\cite{Cartan} as follows. 
A non-empty subfamily $\I$ of $\P(\N)$ is an \emph{ideal of $\N$} if  $A\cup B\in \I$ for every $A,B\in\I$, and if $A \in \P(\N)$ and $B\in\I$ with $A \subseteq B$, then $A\in\I$. An easy example of ideal of $\N$ is the family $\F in$ of all finite subsets of $\N$, and we always assume that an ideal $\I$ contains $\F in$, that is, $\I$ is \emph{free}.

For a free ideal $\I$ of $\N$, a sequence $(x_n)$ in $\T$ is said to {\em $\I$-converge} to a point $x\in \T$, denoted by $x_n\overset{\I}\longrightarrow x$, if $\{n\in \N: x_n \not \in U\}\in \I$ for every neighborhood $U$ of $x$ in $\T$.  Moreover, for a sequence $\uu$ of integers, let 
$$t_\uu^\I(\T) :=\{x\in \T: u_nx\overset{\I}\longrightarrow 0 \}.$$
So, a subgroup $H$ of $\T$ is {\em $\I$-characterized} if there exists a sequence $\uu$ of integers such that $H=t_\uu^\I(\T)$; the elements of $t_\uu^\I(\T)$ are called \emph{topologically $\uu_\I$-torsion elements} of $\T$.

The usual convergence is the $\F in$-convergence, so $t_\uu^{\F in}(\T)=t_\uu(\T)$. Moreover, always $t_\uu(\T)\subseteq t_\uu^\I(\T)$ as $\I$ is free.
For $\alpha\in(0,1]$, the family $\I_\alpha:=\{A\subseteq\N: d_\alpha(A)=0\}$ is an ideal of $\N$; for $\alpha=1$, $\I_d:=\I_1$.
So, the $\alpha$-statistical convergence is the $\I_\alpha$-convergence, and $t_\uu^{\I_\alpha}(\T)=t_\uu^\alpha(\T)$; in particular, $t_\uu^{\I_d}(\T)=t_\uu^s(\T)$.

\smallskip
To state our main result, recall that an ideal $\I$ of $\N$ is a \emph{$P$-ideal}  if for every countable family $\{A_n:n\in\N\}\subseteq \I$ there exists a {\em pseudounion} $U\in \I$ of $\{A_n:n\in\N\}$ (namely, a subset $U$ of $\N$ with  $A_n\subseteq^*U$ for all $n\in\N$, the pseudounion is not uniquely determined).  Moreover, following~\cite{DasGhosh}, a set $A\subseteq\N$ is \emph{$\mathcal I$-translation invariant} if $(A+n)\cap\N\in\mathcal I$ for every $n\in\Z$, while the ideal $\mathcal I$ is \emph{translation invariant} if every $A\in\mathcal \I$ is $\mathcal \I$-translation invariant. The ideals $\F in$ and $\I_\alpha$, with $\alpha\in(0,1]$, are translation invariant free $P$-ideals. For two subsets $A, B$ of $\N$ and an ideal $\I$ of $\N$, denote:
\begin{itemize}
   \item[-] $A \subseteq^{\I} B$ if $A \setminus B \in \I$;
   \item[-] $A \subseteq_{\I} B$ if $A \subseteq B$ and $B \setminus A \in \I$, i.e., $A \subseteq B$ and $B\subseteq^\I A$.
\end{itemize}

\begin{definition}\label{IdealiDuBu} For $\uu \! \in \! \A$, denote by  $\B_\uu^*$ (resp., $\D_\uu^*$) the family of all $b$-bounded (resp., $b$-divergent) subsets of $\N$.
\end{definition}

Obviously, both $\B_\uu^*$ and $\D_\uu^*$ are ideal bases, while  $\B_\uu:= \B_\uu^* \cup \F in$ and $\D_\uu:= \D_\uu^* \cup \F in$ are free ideals of $\N$. 

\smallskip
The following is the main result of this paper, which characterizes the topologically $\uu_\I$-torsion elements of $\T$ with respect to a given $\uu\in\A$.
In the sequel with $\ax$, $\axuno$, $\axdue$ and $\bx$, we always refer to the properties in the following statement.

\begin{theorem}\label{conjecture} 
Let $\uu\in\mathcal A$, let $\I$ be a translation invariant free $P$-ideal of $\N$ and $x \in[0,1)$. Then $\bar x\in t^{\I}_\uu(\T)$ if and only if 
for all $A \in\P(\N)\setminus\I$ the following holds.
 \begin{itemize}
   \item[$\ax$] If $A\in \B_\uu$ (i.e., $A$ is $b$-bounded), then:
 \begin{itemize}
     \item[$\axuno$] if $A \subseteq^{\I} \supp(x)$, then $A \subseteq^{\I} \supp_b(x)$, $A+1\subseteq^{\I} \supp(x)$ and there exists $A' \subseteq_{\I} A$ such that $A' \subseteq S_b$ and $\lim\limits_{n \in A'} \frac{c_{n+1}+1}{b_{n+1}}=1$;
     \item[$\axdue$]  if $A\cap \supp(x) \in \I$, then there exists $B'\subseteq_\I A$ such that $\lim\limits_{n \in B'} \frac{c_{n+1}}{b_{n+1}}=0$. 
  \end{itemize}
\item[$\bx$] If $A\in \D_\uu$ (i.e., $A$ is $b$-divergent), then there exists $B \subseteq_{\I} A$ such that $\lim\limits_{n \in B} \varphi\left(\frac{c_n}{b_n}\right)=0$.
\end{itemize}
\end{theorem}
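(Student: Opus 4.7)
The plan is to reduce everything to the single quantity $\alpha_n := \sum_{k > n} c_k/(b_{n+1}\cdots b_k) \in [0,1)$, which by~\eqref{ex-4} satisfies $u_n x \equiv \alpha_n \pmod{\Z}$ together with the recursion $\alpha_{n-1} = (c_n + \alpha_n)/b_n$. Thus $\bar x \in t_\uu^\I(\T)$ is equivalent to $\varphi(\alpha_n) \overset{\I}{\to} 0$, i.e.\ $\{n : \alpha_n \in [\epsilon, 1-\epsilon]\} \in \I$ for every $\epsilon \in (0,1/2)$. Two structural features of $\I$ will be used constantly: since $\I$ is a $P$-ideal, $\I|_A$-convergence to $0$ is equivalent to the existence of $B \subseteq_\I A$ along which one has an ordinary limit $0$ (which translates between the ``exists $B\subseteq_\I A$'' formulation in the statement and $\I$-convergence), and translation invariance ensures that $\varphi(\alpha_n)\overset{\I}{\to} 0$ restricts, for any $A \notin \I$, to $\varphi(\alpha_{n-1}) \overset{\I|_A}{\to} 0$.

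For necessity, fix $A \notin \I$ and exploit the bound $|\alpha_{n-1} - c_n/b_n| < 1/b_n$. If $A$ is $b$-divergent, then $1/b_n \to 0$ along $A$, so $\varphi(c_n/b_n)$ inherits $\I$-convergence to $0$ and $\bx$ follows via the $P$-ideal property. If $A$ is $b$-bounded with $A \subseteq^\I \supp(x)$ (so $c_n \geq 1$, $b_n \leq M$ on $\I$-a.e.\ $n \in A$), then $\alpha_{n-1} \geq 1/M$ forces $\alpha_{n-1}$ to be $\I$-close to $1$ on $A$; this yields $c_n = b_n - 1$ (i.e.\ $A \subseteq^\I \supp_b(x)$) and makes $1 - \alpha_n = b_n(1-\alpha_{n-1})$ $\I$-small, so $\alpha_n$ is $\I$-close to $1$, hence $c_{n+1} \neq 0$ (giving $A+1 \subseteq^\I \supp(x)$) and $(c_{n+1}+1)/b_{n+1} \to 1$ after a suitable $\subseteq_\I$-refinement, producing $\axuno$. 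If instead $A \cap \supp(x) \in \I$, then $c_n = 0$ gives $\alpha_{n-1} = \alpha_n/b_n \leq 1/2$, forcing $\alpha_{n-1}$ and hence $\alpha_n = b_n \alpha_{n-1}$ to be $\I$-close to $0$; then $c_{n+1}/b_{n+1} \leq \alpha_n$ yields $\axdue$.

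For sufficiency, argue by contradiction: pick $\epsilon \in (0, 1/2)$ with $M_\epsilon := \{n : \varphi(\alpha_n) \geq \epsilon\} \notin \I$ and set $A_0 := M_\epsilon + 1 \notin \I$. A standard splitting using the $P$-ideal property (either $A_0 \cap \{n : b_n \leq k\} \notin \I$ for some $k$, or a pseudounion of these $\I$-sets leaves a $b$-divergent residue in $A_0$) produces $A^* \subseteq A_0$, $A^* \notin \I$, lying in $\B_\uu^* \cup \D_\uu^*$. The $b$-divergent case dispatches immediately via $\bx$ together with $|\alpha_{n-1} - c_n/b_n| \to 0$ against $\varphi(\alpha_{n-1}) \geq \epsilon$. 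In the $b$-bounded case, split $A^*$ by $\supp(x)$ and apply $\axuno$ or $\axdue$: combining the resulting conclusion with $\varphi(\alpha_{n-1}) \geq \epsilon$ gives, on a non-$\I$ subset, either $\alpha_n \leq 1 - 2\epsilon$ (when $c_n = b_n - 1$ and $\varphi(\alpha_{n-1}) = (1-\alpha_n)/b_n$) or $\alpha_n \geq 2\epsilon$ (when $c_n = 0$ and $\alpha_n = b_n\alpha_{n-1}$).

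The main obstacle is precisely this last step: $\axuno$ and $\axdue$ control only $c_{n+1}/b_{n+1}$, which does not by itself contradict the $\alpha_n$-bound when $b_{n+1}$ can grow. The resolution is an iteration. The derived bound on $\alpha_n$, combined with $\alpha_n = (c_{n+1}+\alpha_{n+1})/b_{n+1}$ and the asymptotic for $c_{n+1}/b_{n+1}$, pins $b_{n+1}$ to be bounded on a further non-$\I$ subset (for instance, $(c_{n+1}+1)/b_{n+1} > 1 - \delta$ against $\alpha_n \leq 1 - 2\epsilon$ forces $b_{n+1} < 1/(2\epsilon - \delta)$), so the shifted set $A^* + 1$ is again $b$-bounded and $\axuno$/$\axdue$ may be reapplied. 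Each iteration doubles the gap: after $k$ shifts one obtains $\alpha_n \leq 1 - 2^k\epsilon$ (resp.\ $\alpha_n \geq 2^k\epsilon$), and since $\alpha_n \in [0,1)$ the procedure collapses once $2^k\epsilon > 1$, producing the contradiction. Translation invariance and the $P$-ideal property are what keep the extracted sets non-$\I$ throughout these finitely many shifts.
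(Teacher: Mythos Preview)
Your necessity argument is essentially the paper's: both exploit the recursion $\{u_{n-1}x\}=(c_n+\{u_nx\})/b_n$ (your $\alpha_{n-1}=(c_n+\alpha_n)/b_n$, the paper's \eqref{eqn:ug6}) together with the $P$-ideal property to pass to a set in $\I^*$ along which ordinary limits hold. The case analysis (\,$b$-bounded with $A\subseteq^\I S$, $b$-bounded with $A\cap S\in\I$, $b$-divergent) and the individual computations match those in Lemma~\ref{Lemma2.7} and Proposition~\ref{necessity}.

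Your sufficiency argument, by contrast, takes a genuinely different route and is correct. The paper proves, for each $A\notin\I$, the existence of an infinite $B\subseteq A$ with $\lim_{n\in B}\varphi(u_{n-1}x)=0$ by an \emph{infinite} inductive construction: it builds a decreasing sequence $A=A_0\supseteq A_1\supseteq\cdots$ with $A_k\subseteq_\I A$ and $S_k(A_k)\subseteq S_b$ (property $(\mathrm{S}_A)$, resp.\ $(\mathrm{T}_A)$), using Claim~\ref{DD} at each stage to force $A_k+k+1$ back into $\B_\uu$, and then extracts a diagonal sequence $n_k\in A_k$. You instead fix $\eps$ with $\{n:\varphi(\alpha_n)\geq\eps\}\notin\I$ and iterate only \emph{finitely} many times: on a non-$\I$ $b$-bounded subset inside $S_b$ the identity $1-\alpha_n=b_n(1-\alpha_{n-1})$ doubles the defect $1-\alpha$ at each step, while $\axuno$ combined with the bound $c_{n+1}/b_{n+1}\leq\alpha_n$ forces the next ratio $b_{n+1}$ to remain bounded (this is exactly the mechanism behind Claim~\ref{DD}, used quantitatively). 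After $\lceil\log_2(1/\eps)\rceil$ shifts the inequality $1-\alpha_{n+k}\geq 2^{k+1}\eps>1$ collides with $\alpha_{n+k}\in[0,1)$; the $\axdue$ branch is symmetric with $\alpha_{n+k}\geq 2^{k+1}\eps$. What the paper buys is a direct construction of the witness $B$ valid for every $A$, independent of any $\eps$; what you buy is a shorter argument that entirely avoids the infinite descent (precisely the step that was mishandled in \cite{DG1,Ghosh}; cf.\ Remark~\ref{implies*}) by replacing it with a bounded number of $\subseteq_\I$-refinements. One small point worth making explicit in your write-up: in the $\axdue$ branch you need $(B'+1)\cap S\in\I$ to reapply $\axdue$; this follows because $c_{n+1}/b_{n+1}\to 0$ together with the boundedness of $b_{n+1}$ forces $c_{n+1}=0$ on a cofinite subset, but you pass over this in a single clause.
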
 

Ghosh~\cite{Ghosh} stated a similar result (see Theorem \ref{Theorem2.9}) but the proof of his theorem presents a flaw (see Remark \ref{implies*} 
for a description of that flaw and Remark \ref{implies} for a precise comparison of these two results). 
Theorem~\ref{conjecture} implies Theorem \ref{Theorem2.9} (see Remark \ref{implies}), and so also Theorems~\ref{DiD} and~\ref{Th:DG}, as these two are clearly special cases of Theorem~\ref{Theorem2.9}, respectively with $\I=\F in$ and $\I=\I_\alpha$.
  
\smallskip
The following almost immediate corollary of Theorem~\ref{conjecture} (see \S\ref{final stage} for a proof)  is focused on a relevant choice of the pair $\uu, \I$ 
which makes the characterization in Theorem~\ref{conjecture} particularly easy and transparent.  Indeed, when $\I \supseteq \B_\uu$,   $\ax$ in Theorem~\ref{conjecture} is vacuously satisfied (as $A \not \in \I$ yields $A \not \in  \B_\uu$, i.e., $A$ cannot $b$-bounded), so the characterization is reduced to  $\bx$. 
Moreover, Corollary~\ref{IcB} allows us to obtain Theorem \ref{Thm:DPS}(b) and Theorem \ref{Prob:A} (with $\I = \F in$, and consequently, $\N\in \D_\uu=\P(\N)$). 

\begin{corollary}\label{IcB}
Let $\uu\in\A$, let $\I$ be a translation invariant free $P$-ideal of $\N$ with $\B_\uu\subseteq\I$ and  $x\in[0,1)$. Then $\bar x\in t_\uu^{\I}(\T)$ if and only if $\bx$ holds.
\end{corollary}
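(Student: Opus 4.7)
The plan is to deduce this corollary directly from Theorem~\ref{conjecture} by showing that the hypothesis $\B_\uu\subseteq \I$ makes condition $\ax$ trivially satisfied, so that $\bx$ becomes the whole content of the characterization.

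First I would invoke Theorem~\ref{conjecture}, which, under the standing hypotheses on $\uu$ and $\I$, asserts that $\bar x\in t_\uu^{\I}(\T)$ iff for every $A\in \P(\N)\setminus \I$ the conditions $\ax$ (whenever $A\in\B_\uu$) and $\bx$ (whenever $A\in\D_\uu$) hold. Then I would observe that the additional assumption $\B_\uu\subseteq \I$ forces $A\notin\B_\uu$ for every $A\in \P(\N)\setminus \I$: indeed, $A\in\B_\uu$ would give $A\in\I$, contradicting $A\notin\I$. In particular, no $A\notin \I$ is $b$-bounded, so the premise of $\ax$ fails for every $A$ that the universal quantifier ranges over, and $\axuno$, $\axdue$ are both vacuously true.

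Consequently, the conjunction of $\ax$ and $\bx$ over all $A\in \P(\N)\setminus \I$ collapses to just $\bx$ over the same range, which is precisely the statement "$\bx$ holds" in the corollary. The only subtlety to check is the direction "$\bx\Rightarrow \bar x\in t^\I_\uu(\T)$": here one needs that the full set of conditions of Theorem~\ref{conjecture} is satisfied, but the previous paragraph shows that in our situation $\ax$ is automatic, so $\bx$ alone suffices. Since the reduction is purely logical there is no genuine obstacle; the whole content of the argument is the observation that $\B_\uu\subseteq \I$ excludes $b$-bounded infinite sets from $\P(\N)\setminus \I$, so $\ax$ never gets triggered.
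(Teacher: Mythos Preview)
Your proof is correct and is exactly the argument the paper itself sketches in the sentence preceding the corollary: when $\B_\uu\subseteq\I$, no $A\notin\I$ can lie in $\B_\uu$, so $\ax$ is vacuous and Theorem~\ref{conjecture} collapses to $\bx$. The paper's written-out proof differs only cosmetically in the sufficiency direction: rather than citing Theorem~\ref{conjecture} as a black box, it reuses the intermediate Lemmas~\ref{Lemma2.8*} and~\ref{L2.3} directly (finding a $b$-divergent $A'\subseteq_\I A$ inside any $A\notin\I$ and then verifying $(\mathrm U_A)$), but the underlying idea is identical.
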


\smallskip
In Definition~\ref{Isp} we recall \cite[Definition 3.1]{Ghosh}, which is inspired by the notion of splitting sets mentioned above. 
For $\uu\in\A$ and a free ideal $\I$ of $\N$, we say that a subset $X$ of $\N$ is \emph{$b$-bounded mod $\I$} (resp., \emph{$b$-divergent mod $\I$}) if there exists a $b$-bounded (resp., $b$-divergent) $A\subseteq_\I X$. For $\I = \F in$, ``$b$-bounded mod $\I$" (resp., ``$b$-divergent mod $\I$") simply coincides with ``$b$-bounded" (resp., ``$b$-divergent"). Moreover, we say that $\uu$ is \emph{$b$-bounded mod $\I$} (resp., \emph{$b$-divergent mod $\I$})  if $\N$ is {$b$-bounded mod $\I$} (resp., {$b$-divergent mod $\I$}).

\begin{definition}[{\cite[Definition 3.1]{Ghosh}}]\label{Isp} Let $\uu\in\A$ and let $\I$ be a free ideal of $\N$. 
Then $\uu$ has the {\em $\I$-splitting property} if there exists a partition $\N=B \sqcup D$ such that: 
\begin{itemize}
      \item[(a)] $B$ and $D$ are either empty or $B, D\not\in\I$;
      \item[(b)] if $B\not\in\I$, then $B$ is $b$-bounded mod $\I$;
      \item[(c)] if $D\not\in\I$, then $D$ is $b$-divergent mod $\I$. 
\end{itemize}
\end{definition}

The dichotomy in item (a) is not restrictive as if $\N=B \sqcup D$ and $\emptyset\neq D\in\I$, then $B\subseteq_\I\N$, and analogously, $D\subseteq_\I\N$ when $B\in\I$.
More on these (and other) ideas can be found in \S \S \ref{Buu}, \ref{Splitsec} (the latter contains also diagrams relating 
$\B_\uu, \D_\uu$ and other ideals). 

The following is the counterpart of Corollary~\ref{splitting} for topologically $\uu_\I$-torsion elements. It is a consequence of Theorem~\ref{conjecture} in the case when $\uu$ has the $\I$-splitting property. Moreover, an application of Corollary~\ref{ThGh:May29} to the case $\I=\F in$ gives Corollary~\ref{splitting}.

\begin{corollary}\label{ThGh:May29}  Let $\uu\in\mathcal A$, let $\I$ a translation invariant free $P$-ideal of $\N$ and $x \in[0,1)$ with $S=\supp(x)$ and $S_b=\supp_b(x)$. If $\mathbf u$ has the $\I$-splitting property  witnessed by the partition $\N=B\sqcup D$, then $\bar x \in t^{\I}_\uu(\T)$ if and only if the following conditions hold:
\begin{itemize}
   \item[$\unox$] $(B\cap S)+1 \subseteq^{\I} S$, $B\cap S \subseteq^{\I} S_b$ and if $B\cap S \not\in \I$ then there exists $C\subseteq_{\I} B\cap S$ such that $\lim\limits_{n \in C}\frac{c_{n+1}+1}{b_{n+1}}=1$; 
   \item[$\duex$] if $B\setminus S\not\in\I$, then there exists  $C \subseteq_{\I} B\setminus S$  such that $\lim\limits_{n \in C}\frac{c_{n+1}}{b_{n+1}}=0$;
   \item[$\trex$] if $D\cap S\not\in\I$, then there exists $E \subseteq_{\I} D\cap S$ such that $\lim\limits_{n \in E} \varphi\left(\frac{c_n}{b_n}\right)=0$.
\end{itemize}
\end{corollary}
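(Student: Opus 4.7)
The plan is to derive Corollary~\ref{ThGh:May29} directly from Theorem~\ref{conjecture}, exploiting the partition $\N=B\sqcup D$ to reduce the universal quantifier over $A\in\P(\N)\setminus\I$ in the theorem to the three specific subsets $B\cap S$, $B\setminus S$, $D\cap S$ that appear in $\unox$--$\trex$. I fix once and for all a $b$-bounded $\widetilde B\subseteq_\I B$ and a $b$-divergent $\widetilde D\subseteq_\I D$ witnessing the $\I$-splitting property, and use throughout that $\I$ is translation invariant (so $X\in\I$ iff $X+1\in\I$).

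For the forward direction, assuming $\bar x\in t^{\I}_\uu(\T)$, I would handle each of $\unox$, $\duex$, $\trex$ by applying the relevant clause of Theorem~\ref{conjecture} to a suitable subset of $\widetilde B$ or $\widetilde D$. For $\unox$, when $B\cap S\notin\I$, I set $A:=\widetilde B\cap S$: this $A$ is $b$-bounded, is contained in $S$, and satisfies $A=_{\I} B\cap S$ because $(B\cap S)\setminus A\subseteq B\setminus\widetilde B\in\I$; then $\axuno$ applies, and its three conclusions transfer from $A$ to $B\cap S$ by routine ideal bookkeeping (translation invariance is needed for the shifted clause $A+1\subseteq^\I S$), with the witness $A'$ from $\axuno$ playing the role of $C$. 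When $B\cap S\in\I$, the two $\subseteq^\I$-clauses of $\unox$ are automatic. Conditions $\duex$ and $\trex$ follow analogously from $\axdue$ applied to $\widetilde B\setminus S$ and from $\bx$ applied to $\widetilde D\cap S$.

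For the backward direction, given $\unox$--$\trex$, I fix an arbitrary $A\in\P(\N)\setminus\I$ and verify the appropriate case of Theorem~\ref{conjecture}. The key observation is that if $A\in\B_\uu$ then $A\cap D\in\I$, because $A\cap\widetilde D$ would be both $b$-bounded (as a subset of $A$) and $b$-divergent (as a subset of $\widetilde D$), hence finite, and $D\setminus\widetilde D\in\I$; symmetrically, if $A\in\D_\uu$ then $A\cap B\in\I$. For $\ax$ we have $A=_{\I} A\cap B$: the hypothesis $A\subseteq^\I S$ forces $B\cap S\notin\I$, so $\unox$ applies and the witness $C$ intersected with $A$ furnishes the required $A'$; the hypothesis $A\cap S\in\I$ forces $B\setminus S\notin\I$, so $\duex$ applies analogously to produce $B'$. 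For $\bx$ we have $A=_{\I} A\cap D$; if $A\cap D\cap S\in\I$ I take $A\setminus S$ as the witness, otherwise $D\cap S\notin\I$ and $\trex$ yields $E$, and I take $(A\setminus S)\cup(E\cap A)$. Cases where $A$ is neither $b$-bounded nor $b$-divergent are vacuously satisfied.

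The proof is almost pure bookkeeping; the main obstacle is to verify that the relations $\subseteq^\I$, $\subseteq_\I$ and $=_{\I}$ propagate correctly through intersection, complement and the shift $X\mapsto X+1$. Translation invariance of $\I$ enters exactly at the shifted expressions such as $(B\cap S)+1\subseteq^\I S$ in $\unox$ and $(A+1)\setminus S$ in the backward direction. A mildly delicate sub-case in $\bx$ is $A\cap D\cap S\in\I$, where no clause of $\unox$--$\trex$ applies directly; there, one exploits $c_n=0$ outside $\supp(x)$ to make $A\setminus S$ a valid witness.
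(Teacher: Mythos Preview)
Your proposal is correct and follows essentially the same route as the paper: the paper factors the argument into a separate Proposition~\ref{unoduetre} establishing $\axuno\Leftrightarrow\unox$, $\axdue\Leftrightarrow\duex$, $\bx\Leftrightarrow\trex$ under the $\I$-splitting hypothesis, and then obtains the corollary from Theorem~\ref{conjecture}; your outline is precisely the content of that proposition carried out inline. One small point: in the backward verification of $\axuno$, the witness $A':=C\cap A$ need not literally lie in $S_b$, but since you already have $A\subseteq^\I S_b$, replacing $A'$ by $A'\cap S_b$ fixes this at no cost (the paper's own proof of $\unox\Rightarrow\axuno$ has the same harmless omission).
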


\begin{remark}\label{Rem:June14}
(a) $\unox$ is vacuously satisfied when $B\cap S \in \I$. 

(b) Corollary \ref {ThGh:May29}, with slight differences in the three conditions (which anyway are equivalent to ours), was stated in \cite[Theorem 3.6]{Ghosh} but with no proof; in fact, the author says: ``The proof follows from similar line of arguments as in~\cite[Corollary 3.13]{DI} with suitable modifications and so is omitted.'' 
\end{remark}

We conclude by recalling two results from~\cite{Nested}  concerning two special cases of Theorem~\ref{conjecture}, namely, when $\supp(x)$ is $b$-divergent or $b$-bounded. The following result, extending~\cite[Corollary~3.4]{DI} and~\cite[Corollary 3.10]{DG1}, covers the case when $\supp(x)$ is $b$-divergent. 
The sufficiency of the conditions $\Ix$ and $\IIx$ for $\bar x\in t_\uu^\I(\T)$ is directly proved, without any recourse to the sufficiency of the conditions in Theorem~\ref{conjecture}, 

\begin{theorem}[\cite{Nested}]\label{Last:corollary} 
Let $\uu\in\A$, let $\I$ be a translation invariant free $P$-ideal of $\N$ and $x\in[0,1)$ with $S=\supp(x)$ $b$-divergent mod $\I$. Then $\bar x\in t_\uu^\I(\T)$ if and only if  either $S\in\I$ or, in case $S\not\in\I$, the following conditions hold:
\begin{itemize}
    \item[$\Ix$] there exists $D\subseteq_\I S$ such that $\lim\limits_{n\in D}\varphi\left(\frac{c_n}{b_n}\right)=0$;
    \item[$\IIx$] for every $D\in\P(\N)\setminus\I$ with $D\subseteq^\I S$ such that $D-1\in \B_\uu$, there exists $D'\subseteq_\I D$ such that $\lim\limits_{n\in D'}\frac{c_n}{b_n}=0$.
\end{itemize}
\end{theorem}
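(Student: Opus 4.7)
The plan is to pass to the tails $r_n := \sum_{k>n} c_k u_n/u_k \in [0,1)$, so that $u_n\bar x = \overline{r_n}$, and to exploit the recurrence $r_{n-1} = (c_n + r_n)/b_n$, which yields the estimate $|r_{n-1} - c_n/b_n| \leq 1/b_n$. By translation invariance of $\I$, the property $\bar x \in t_\uu^\I(\T)$ is equivalent to $\overline{r_{n+k}} \overset{\I}\longrightarrow 0$ for every fixed $k \in \Z$, a shift flexibility that will be used throughout. A second observation, which is the arithmetic heart of the proof, is that the hypothesis ``$S$ is $b$-divergent mod $\I$'' implies $\{m \in S : b_m \leq M\} \in \I$ for every $M$: its intersection with the $b$-divergent witness of $S$ is finite, and its complement inside $S$ lies in a set in $\I$.

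For the necessity direction I appeal to Theorem~\ref{conjecture}. The case $S \in \I$ makes both $\Ix$ and $\IIx$ vacuously true (any $D$ with $D \subseteq^\I S$ and $D \not\in \I$ would force $D \in \I$), so assume $S \not\in \I$. For $\Ix$, fix a $b$-divergent $A \subseteq_\I S$ and apply Theorem~\ref{conjecture}~$\bx$ to $A$ to obtain $B \subseteq_\I A$ (hence $B \subseteq_\I S$) with $\varphi(c_n/b_n) \to 0$ along $B$. For $\IIx$, given an admissible $D$ (so $D \subseteq^\I S$, $D - 1 \in \B_\uu$ bounded by some $M$, $D \not\in \I$), the observation above gives $(D - 1) \cap S \subseteq \{m \in S : b_m \leq M\} \in \I$; hence Theorem~\ref{conjecture}~$\axdue$ applies to $A := D - 1$, producing $B' \subseteq_\I A$ with $c_{n+1}/b_{n+1} \to 0$ along $B'$. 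Translation invariance then shows $D' := B' + 1 \subseteq_\I D$ and $\lim_{m \in D'} c_m/b_m = 0$.

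For the sufficiency direction, proved directly, fix $\varepsilon > 0$ and show that $B_\varepsilon := \{n : r_n \in (\varepsilon, 1 - \varepsilon)\} \in \I$. If $S \in \I$, translation invariance yields $\bigcup_{i=1}^K (S - i) \in \I$ for any $K$; taking $K$ with $2^{-K} < \varepsilon$, any $n$ off this union has $n + 1, \ldots, n + K \not\in S$, hence $r_n = r_{n+K}/(b_{n+1} \cdots b_{n+K}) \leq 2^{-K} < \varepsilon$. Assume now $S \not\in \I$, take $D \subseteq_\I S$ from $\Ix$ (then $b_n \to \infty$ along $D$, since $\varphi(c_n/b_n) \to 0$ and $c_n \geq 1$), and fix $M > 2/\varepsilon$. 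Split $B_\varepsilon$ by the position of $n+1$: (i) $n+1 \in D$ with $b_{n+1} > M$ contributes only finitely many indices, as $\varphi(c_{n+1}/b_{n+1}) < \varepsilon/2$ eventually; (ii) $n+1 \in S \setminus D$ lies in $(S \setminus D) - 1 \in \I$; (iii) $n+1 \in S$ with $b_{n+1} \leq M$ lies in $\{m \in S : b_m \leq M\} - 1 \in \I$; (iv) $n+1 \not\in S$ with $b_{n+1} > M$ gives $r_n \leq 1/b_{n+1} < \varepsilon$, so does not contribute. The residual case (v), $n+1 \not\in S$ with $b_{n+1} \leq M$: the condition $r_n > \varepsilon$ forces a gap $n+1, \ldots, n+k \not\in S$ of length $k \leq K_0 := \lceil \log_2(1/\varepsilon) \rceil$, with $b_{n+1}, \ldots, b_{n+k} \leq M$, ending at $m := n+k+1 \in S$ with $b_{m-1} \leq M$; so $m \in D_0 := \{m \in S : b_{m-1} \leq M\}$. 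If $D_0 \in \I$ then $\bigcup_{j=1}^{K_0 + 1}(D_0 - j) \in \I$ absorbs (v); otherwise $\IIx$ applies to $D_0$ (as $D_0 \subseteq S$ and $D_0 - 1 \in \B_\uu^*$), yielding $D'_0 \subseteq_\I D_0$ with $c_m/b_m \to 0$. Since $c_m \geq 1$ on $D'_0$ this forces $b_m \to \infty$, so $r_{m-1} < (c_m + 1)/b_m \to 0$, and only finitely many $m \in D'_0$ can witness $r_{n+k} > 2\varepsilon$; hence (v) is contained in $\bigcup_{j=1}^{K_0 + 1}((D_0 \setminus D'_0) - j)$ together with a finite set, all in $\I$.

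The main obstacle I expect is the orchestration of case (v) of sufficiency: one traces the ``non-support with bounded $b$'' part of $B_\varepsilon$ back to an element of $D_0$ within a window of length $K_0$, invokes $\IIx$ through the shifted set $D_0 = \{m \in S : b_{m-1} \leq M\}$, and then uses the induced divergence of $b_m$ along $D'_0$ (which comes from $c_m \geq 1$ together with $c_m/b_m \to 0$) to collapse the residual to an $\I$-set. A secondary delicate point is recognizing, in the necessity of $\IIx$, that the hypothesis ``$S$ is $b$-divergent mod $\I$'' forces $(D - 1) \cap S \in \I$, which is precisely what lets the $\axdue$ branch of Theorem~\ref{conjecture} supply the real-valued conclusion rather than the mod-$\Z$ one of $\axuno$.
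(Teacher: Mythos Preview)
Your proof is correct. The paper itself does not supply a proof of this statement (it is cited from \cite{Nested}), but it does remark that in \cite{Nested} ``the sufficiency of the conditions $\Ix$ and $\IIx$ for $\bar x\in t_\uu^\I(\T)$ is directly proved, without any recourse to the sufficiency of the conditions in Theorem~\ref{conjecture}''. Your argument matches this description exactly: necessity is derived from Theorem~\ref{conjecture} (via $\bx$ for $\Ix$ and via $\axdue$ for $\IIx$, using the key observation that $\{m\in S:b_m\le M\}\in\I$ forces $(D-1)\cap S\in\I$), and sufficiency is handled by a direct tail estimate on $r_n=\{u_nx\}$.

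One point worth making explicit for the reader, since you state it without justification: in case~(v) the claim that $b_{n+1},\ldots,b_{n+k}\le M$ (and in particular $b_{m-1}\le M$, so $m\in D_0$) follows because $r_n=r_{m-1}/\prod_{j=1}^k b_{n+j}>\varepsilon$ and $r_{m-1}<1$ force $\prod_{j=1}^k b_{n+j}<1/\varepsilon<M$, hence each factor is below $M$. With this filled in, the chain of inclusions placing $B_\varepsilon$ inside a finite union of translates of $\I$-sets is airtight. A tiny cosmetic remark: when $S\in\I$ there is nothing to prove in the necessity direction (the statement has the form ``either $S\in\I$ or \ldots''), so your line about $\Ix$, $\IIx$ being ``vacuously true'' is superfluous.
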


Similarly, when $\supp(x)$ is $b$-bounded, the following result from~\cite{Nested} extends~\cite[Corollary 3.9]{DG1} and corrects~\cite[Corollary 2.12]{Ghosh}.

\begin{theorem}[\cite{Nested}]\label{Nuovo:Th}
Let $\uu\in\A$, $\I$ a translation invariant free ideal of $\N$ and $x\in[0,1)$. If $\supp(x)$ is $b$-bounded mod $\I$, then $\bar x\in t_\uu^\I(\T)$ if and only if $\axdue$ as well as the following two conditions hold: 
\begin{itemize}
  \item[$\ix$] $S+1\subseteq^\I S$; 
  \item[$\iix$] $S_b\subseteq_\I S$.
\end{itemize} 
\end{theorem}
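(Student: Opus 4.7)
My strategy is to reduce both implications to Theorem~\ref{conjecture}, exploiting the hypothesis that $S=\supp(x)$ is $b$-bounded mod $\I$ to fix a $b$-bounded set $B_0\subseteq_\I S$.

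For the \emph{forward direction}, $\axdue$ comes directly from Theorem~\ref{conjecture}. If $S\in\I$, both $\ix$ and $\iix$ are immediate (from translation invariance of $\I$ and from $S_b\subseteq S$ respectively). Otherwise $B_0\notin\I$, so Theorem~\ref{conjecture}'s $\axuno$ applied to $A=B_0$ gives $B_0\subseteq^\I S_b$ and $B_0+1\subseteq^\I S$. The decompositions
\[
S\setminus S_b \;=\; (B_0\setminus S_b)\,\cup\,\bigl((S\setminus B_0)\setminus S_b\bigr),\qquad (S+1)\setminus S \;\subseteq\; \bigl((B_0+1)\setminus S\bigr)\,\cup\,\bigl((S\setminus B_0)+1\bigr),
\]
combined with $S\setminus B_0\in\I$ and translation invariance of $\I$, then yield $\iix$ and $\ix$.

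For the \emph{backward direction}, I would assume $\axdue$, $\ix$, $\iix$ and verify conditions $\ax$ and $\bx$ of Theorem~\ref{conjecture} for each $A\in\P(\N)\setminus\I$. If $A$ is $b$-divergent, then $A\cap B_0$ is simultaneously $b$-bounded and $b$-divergent, hence finite, and $A\cap(S\setminus B_0)\in\I$ since $S\setminus B_0\in\I$; therefore $A\cap S\in\I$, and taking $B:=A\setminus S\subseteq_\I A$ we get $c_n=0$ on $B$, so $\bx$ holds trivially. If $A\in\B_\uu\setminus\I$ with $A\cap S\in\I$, then $\axdue$ is exactly the hypothesis. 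If $A\in\B_\uu\setminus\I$ with $A\subseteq^\I S$, the containments $A\subseteq^\I S_b$ and $A+1\subseteq^\I S$ of $\axuno$ follow from $\iix$, $\ix$ and translation invariance via the same style of decompositions as above.

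The \emph{crux} is producing $A'\subseteq_\I A$ with $A'\subseteq S_b$ and $\lim_{n\in A'}\frac{c_{n+1}+1}{b_{n+1}}=1$. For each $\eps>0$ I would set $T_\eps:=\{n\in A\cap S_b: \frac{c_{n+1}+1}{b_{n+1}}\le 1-\eps\}$ and split $T_\eps=T_\eps^{(1)}\sqcup T_\eps^{(2)}$ according to whether $n+1\notin S$ or $n+1\in S$. The first piece is contained in $\{n\in A: n+1\notin S\}=\bigl((A+1)\setminus S\bigr)-1\in\I$ (via $A+1\subseteq^\I S$ and translation invariance). For the second, any $m\in(T_\eps^{(2)}+1)\cap S_b$ would force $c_m=b_m-1$ and $c_m+1\le(1-\eps)b_m$ simultaneously, which is impossible; hence $T_\eps^{(2)}+1\subseteq S\setminus S_b\in\I$ by $\iix$, so $T_\eps^{(2)}\in\I$ by translation invariance. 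Thus $T_\eps\in\I$ for every $\eps>0$. Using the $P$-ideal property one picks a pseudounion $U\in\I$ of the family $\{T_{1/k}:k\ge 1\}$ and sets $A':=(A\cap S_b)\setminus U$: the inclusions $A\setminus A'\subseteq (A\setminus S)\cup(S\setminus S_b)\cup U\in\I$ give $A'\subseteq_\I A$, and the pseudounion property yields the desired limit along $A'$. The main obstacle is precisely this last step; its success hinges on the $S_b$-contradiction that forces $T_\eps^{(2)}+1\subseteq S\setminus S_b$ and thereby reduces the limit to the hypothesis $\iix$.
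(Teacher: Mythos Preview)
There is a genuine gap: the theorem assumes only that $\I$ is a translation invariant free ideal, \emph{not} a $P$-ideal, yet your argument leans on the $P$-ideal hypothesis in two essential places. First, both directions go through Theorem~\ref{conjecture}, whose statement (and proof of necessity, Proposition~\ref{necessity}) explicitly requires $\I$ to be a $P$-ideal; in particular you cannot simply import $\axdue$ and $\axuno$ for the forward direction without that hypothesis. Second, in the backward direction you write ``Using the $P$-ideal property one picks a pseudounion $U\in\I$ of the family $\{T_{1/k}:k\ge1\}$''---but no such pseudounion need exist for a general free ideal, so the construction of $A'$ collapses. The preceding steps (showing each $T_\eps\in\I$ via the dichotomy $T_\eps^{(1)}\subseteq((A+1)\setminus S)-1$ and $T_\eps^{(2)}+1\subseteq S\setminus S_b$) are correct and nicely done, but they only give the $\I$-convergence statement ``$\frac{c_{n+1}+1}{b_{n+1}}\overset{\I}\longrightarrow 1$ along $A\cap S_b$'', not the existence of a single $A'\subseteq_\I A$ on which the ordinary limit holds.

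Note also that the paper itself does not prove Theorem~\ref{Nuovo:Th}; it is quoted from \cite{Nested}. So there is no in-paper proof to compare against, but the result as stated is strictly more general than what your reduction to Theorem~\ref{conjecture} can reach. If you add the $P$-ideal hypothesis, your argument is complete and pleasant; to match the stated generality you would need to bypass Theorem~\ref{conjecture} and argue directly (e.g., establishing $\I$-convergence of $\|u_nx\|$ to $0$ from $\ix$, $\iix$, $\axdue$ by elementary estimates, and conversely deriving $\ix$, $\iix$, $\axdue$ from $\bar x\in t_\uu^\I(\T)$ without invoking $\I^*$-convergence).
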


\medskip
We  end the introduction by briefly describing the content of the paper.
In \S\ref{Prel} we recall properties of ideals and ideal convergence (\S\ref{Idealsec}), as well as important results used in the main part of the paper (\S\ref{Sec:Approx}) and discuss the ideals $\B_\uu$ and $\D_\uu$ (\S\ref{Buu}). \S\ref{Mainsec} contains the proof of Theorem~\ref{conjecture}. We prove separately the necessity (\S\ref{Necsec}) and the sufficiency (\S\ref{Suffsec}). In \S\ref{Splitsec} Theorem~\ref{conjecture} is analyzed in the special case when the sequence $\mathbf u$ has the $\I$-splitting property. Finally, in \S\ref{notinTsec} we find sufficient conditions for an element of $\T$ to be not topologically $\uu_\I$-torsion.

\subsection*{Notation and terminology}

As usual, denote by $\N$ the set of natural numbers and by $\P(\N)$ its power set. For $A\subseteq \N$, let $A^*:=\N\setminus A$ and for an ideal $\I$ let $\I^*:=\{A^*:A\in\I\}$. If $\I_1$ and $\I_2$ are ideals, then the ideal generated by $\I_1\cup\I_2$, denoted by $\I_1+\I_2$, consists of all unions $I_1 \cup I_2$, with $I_s \in \I_s$ for $s = 1,2$. 

Moreover, let $\N_+=\{n\in\N:n>0\}$, and $\Z$, $\Q$ and $\R$ are respectively the group of integers, rationals and reals.
For $m\in \N_+$, $\Z(m) \cong \Z/m\Z$ denotes the cyclic group of order $m$, and for $p$ a prime let $\Z(p^\infty)$ denote the Pr\"ufer group.

For $r\in\R$, denote by $\{r\}$ its fractional part and for $x,y\in\R$, we write $x\equiv_\Z y$ if and only if $x-y\in\Z$.
The function  $\| -\|\colon\T\to[0,1/2]$, defined by $\|x+\Z\|=\min\{|x-n|:n\in\N\}$ for every $x\in\R$, is a norm on $\T$.

\section{Preliminary results}\label{Prel}

\subsection{Ideals and ideal convergence}\label{Idealsec}

 In the following lemma we collect several basic properties that we use in all the paper.

\begin{lemma}\label{New:Lemma}
Let $\I$ be an ideal of $\N$ and $A,B, C, A_1,\ldots, A_k\in\P(\N)$.
\begin{itemize}
  \item[(a)] If $A_i \subseteq_\I B$ $($resp., $A_i \subseteq^\I B)$ for $i\in\{1,2,\ldots, k\}$, then $\bigcap_{i=1}^k A_i \subseteq_\I B$ $($resp., $\bigcup_{i=1}^k A_i \subseteq^\I B)$. 
  \item[(b)] $A \subseteq^\I B$ if and only if $A \cap B \subseteq_\I A$. 
  \item[(c)]  If $A \subseteq^\I B$ $($resp., $A \subseteq_\I B)$ and $\I$ is translation invariant, then $A + 1 \subseteq^\I B+1$ $($resp., $A + 1 \subseteq_\I B+1)$. 
  \item[(d)] If $A \subseteq^\I B$ $($resp., $A \subseteq_\I B)$ and $B \subseteq^\I C$ $($resp., $B \subseteq_\I C)$,  then $A \subseteq^\I C$ $($resp., $A \subseteq_\I C)$.
  \item[(e)] If $A\cap B \in \I$ and  $C \subseteq^\I B$, then  $A\cap C \in \I$. 
\end{itemize}
\end{lemma}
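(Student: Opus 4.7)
My plan is to dispatch each of the five items by unwinding the two defining relations, $A\subseteq^\I B \Leftrightarrow A\setminus B\in\I$ and $A\subseteq_\I B \Leftrightarrow (A\subseteq B \text{ and } B\setminus A\in\I)$, and then invoking only the two defining closure properties of an ideal, namely closure under finite unions and downward closure under inclusion. Translation invariance of $\I$ will enter only in item (c), and the $P$-ideal and freeness hypotheses available in the rest of the paper are not needed at all here.

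For (a), the identity $\bigl(\bigcup_{i=1}^k A_i\bigr)\setminus B = \bigcup_{i=1}^k (A_i\setminus B)$ reduces the $\subseteq^\I$ statement to closure of $\I$ under finite unions; for the $\subseteq_\I$ version the inclusion $\bigcap_i A_i\subseteq B$ is automatic, and the dual identity $B\setminus\bigcap_{i=1}^k A_i = \bigcup_{i=1}^k (B\setminus A_i)$ finishes the argument. For (b), since $A\cap B\subseteq A$ is automatic and $A\setminus(A\cap B)=A\setminus B$, both sides reduce literally to the same condition $A\setminus B\in\I$. For (d), the transitivity of $\subseteq^\I$ follows from $A\setminus C\subseteq (A\setminus B)\cup(B\setminus C)$; the $\subseteq_\I$ half then combines the chain $A\subseteq B\subseteq C$ with the dual identity $C\setminus A=(C\setminus B)\cup(B\setminus A)$. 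Finally, (e) follows from the elementary inclusion $A\cap C\subseteq (A\cap B)\cup(C\setminus B)$, which reduces the claim to closure under finite unions.

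The only step that requires more than pure set theory is (c), which is thus the (mild) main obstacle. Here the identity $(A+1)\setminus(B+1)=(A\setminus B)+1$ reduces the $\subseteq^\I$ statement to checking that the translate of an element of $\I$ lies in $\I$. Since $A\setminus B\subseteq\N$ gives $(A\setminus B)+1\subseteq\N_+\subseteq\N$, the truncation against $\N$ appearing in the definition of $\I$-translation invariance is vacuous, so the hypothesis that $\I$ is translation invariant yields $(A\setminus B)+1\in\I$ directly. The $\subseteq_\I$ version then follows by combining this with the trivial implication $A\subseteq B\Rightarrow A+1\subseteq B+1$.
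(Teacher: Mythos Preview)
Your proof is correct; each item is dispatched by the natural set-theoretic identity or inclusion together with the closure properties of an ideal, and your handling of (c) via $(A+1)\setminus(B+1)=(A\setminus B)+1$ together with the observation that the intersection with $\N$ is vacuous for a positive shift is exactly right. The paper does not actually supply a proof of this lemma---it is stated and used as a collection of basic facts---so your elementary verification is precisely what the authors implicitly have in mind.
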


 We will also need the following equivalent condition for an ideal to be translation invariant. The proof is immediate.

\begin{lemma}\label{ti1}
An ideal $\I$ of $\N$ is translation invariant if and only if $(A-1)\cap \N\in\I$ and $A+1\in\I$ for every $A\in\I$. 
\end{lemma}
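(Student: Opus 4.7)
The plan is to prove the two implications separately, with the backward direction being a straightforward induction on $|n|$.

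For the forward direction, suppose $\I$ is translation invariant and let $A \in \I$. By definition, $(A+n) \cap \N \in \I$ for every $n \in \Z$. Taking $n = 1$, since $A \subseteq \N$ implies $A+1 \subseteq \N$, we get $A+1 = (A+1)\cap \N \in \I$. Taking $n = -1$ gives $(A-1)\cap \N \in \I$. Nothing to do here.

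For the backward direction, assume that for every $A \in \I$ we have $A+1 \in \I$ and $(A-1)\cap \N \in \I$. Fix $A\in\I$; I want to show $(A+n)\cap\N\in\I$ for every $n\in\Z$. I split according to the sign of $n$ and induct on $|n|$. For $n \geq 0$ the base case $n=0$ gives $(A+0)\cap\N = A \in \I$, and the inductive step uses the hypothesis $B+1 \in \I$ whenever $B\in \I$, applied to $B = (A+n)\cap\N$ (noting $(A+n)\cap\N + 1 = (A+(n+1))\cap\N$ since shifting by $+1$ maps $\N$ into $\N$). For $n \leq 0$, I induct using the other hypothesis: if $B := (A+n)\cap\N \in \I$, then $(B-1)\cap\N \in \I$, and the key set-theoretic identity to verify is
\[
\bigl((A+n)\cap \N - 1\bigr)\cap \N \;=\; (A+n-1)\cap \N.
\]
This holds because the left-hand side equals $\{a+n-1: a\in A,\ a\geq 1-n\}$ while the right-hand side equals $\{a+n-1: a\in A,\ a\geq 1-n\}$, the two constraints $a+n\geq 1$ and $a+n-1\geq 0$ coinciding.

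The only step with any subtlety is the set-theoretic identity above, which must be checked carefully because the two intersections with $\N$ do not a priori commute with the shift; but once one observes that the condition $a+n \geq 1$ arising from $(A+n)\cap\N$ followed by subtracting $1$ is the same as the condition $a+n-1\geq 0$ arising from $(A+n-1)\cap\N$, the induction closes immediately. Since both directions are transparent, the proof will be one short paragraph; as the excerpt itself says, ``The proof is immediate.''
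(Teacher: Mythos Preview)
Your proof is correct and is exactly the natural argument the paper has in mind when it says ``The proof is immediate''; the paper gives no written proof to compare against, and your induction on $|n|$ with the set-theoretic identity $((A+n)\cap\N-1)\cap\N=(A+n-1)\cap\N$ is precisely the routine verification required.
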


Next we recall useful notions from~\cite{BCsurvey}. Every infinite, non-cofinite subset $A$ of $\N$ can be written as 
$A=\bigcup_{n\in\N}[a_n,b_n]$, where $a_n,b_n\in\N$ and $a_{n+1}>b_n+1$.
Here $A_n= [a_n,b_n]$ is the \emph{$n$-th maximal interval} of $A$ and $G_n= (b_n,a_{n+1})$ the \emph{$n$-th gap} of $A$. For every $n\in\N$, let
$$c_n(A):=|[a_n,b_n]|=b_n-a_n+1\quad\text{and}\quad g_n(A):=|(b_n,a_{n+1})|=a_{n+1}-b_n-1;$$ moreover, let $g_{-1}(A):=|[0,a_0)|=a_0$. Then let 
$$c(A):=\sup_{n\in\N} c_n(A) \quad\text{and}\quad g(A):=\sup_{n\in\N\cup\{-1\}} g_n(A).$$ 
For $A\in\P(\N)$, $c(A)$ is the \emph{convexity number} and $g(A)$ is the \emph{gap number} of $A$. Clearly, $c(A)=g(A^*)$ and, for $A,B$ infinite non-cofinite subsets of $\N$, if $A \subseteq B$, then $c(A)\leq c(B)$ and $g(A) \geq g(B)$.

\begin{lemma}\label{ginfty}
Let $\I$ be a proper free ideal of $\N$ and $X\subseteq \N$. 
\begin{itemize}
    \item[(a)] If $X\in\I$ and $X$ is $\I$-translation invariant, then $g(X)=\infty$.
    \item[(b)] If $X^*\in\I$ and $X^*$ is $\I$-translation invariant, then $c(X)=\infty$.
\end{itemize}
\end{lemma}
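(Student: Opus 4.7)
The two parts are linked by the elementary observation, recorded just before the statement, that $c(X)=g(X^*)$. Consequently, once (a) is established, applying it to $X^*$ (which lies in $\I$ and is $\I$-translation invariant by hypothesis) yields $g(X^*)=\infty$, i.e.\ $c(X)=\infty$, proving (b). So the content is entirely in part (a).

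For (a), I would argue by contradiction. First I would dispose of the degenerate cases: if $X$ is finite, then $g(X)=\infty$ trivially (the gap beyond $\max X$ is infinite), while $X$ cannot be cofinite, since then $X^*$ would be finite hence in $\I$ (as $\I$ is free), forcing $\N = X\cup X^* \in \I$ against the properness of $\I$. Hence I may assume $X=\bigcup_{n\in\N}[a_n,b_n]$ is infinite and non-cofinite. Suppose toward a contradiction that $g(X)=k<\infty$. Then every interior gap $(b_j,a_{j+1})$ has length at most $k$ and the initial gap $[0,a_0)$ satisfies $a_0\leq k$, so every $m\in\N\setminus X$ sits within distance at most $k$ of an element of $X$. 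This yields the covering
\[
\N \;\subseteq\; X \;\cup\; \bigcup_{r=1}^{k}\bigl((X-r)\cap\N\bigr) \;\cup\; \bigcup_{r=1}^{k}\bigl((X+r)\cap\N\bigr).
\]
By $\I$-translation invariance of $X$ each set on the right belongs to $\I$, and closure of $\I$ under finite unions then forces $\N\in\I$, contradicting that $\I$ is proper.

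The only step requiring actual checking is the displayed covering inclusion: one must handle the initial block $[0,a_0)$ (via $a_0\leq k$, giving $m\in X-(a_0-m)$ with $1\leq a_0-m\leq k$) separately from the interior gaps $(b_j,a_{j+1})$ (where $m=b_j+r$ with $1\leq r\leq k$, so $m\in X+r$). I expect this small case distinction to be the only delicate point; once it is in place, the contradiction is an immediate consequence of the ideal-theoretic hypotheses on $\I$ and $X$.
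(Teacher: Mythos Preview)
Your proof is correct and follows essentially the same approach as the paper's: assume $g(X)<\infty$, cover $\N$ by finitely many translates of $X$, and conclude $\N\in\I$ via $\I$-translation invariance, contradicting properness; part (b) is deduced from (a) via $c(X)=g(X^*)$ in both. The paper's version is marginally more economical, using only left shifts $X-i$ for $0\le i\le \ell$ (for interior gaps one looks at $a_{j+1}$ rather than $b_j$), so your two-sided covering is valid but the right shifts $X+r$ are not actually needed.
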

\begin{proof}
(a) If $g(X)=\ell<\infty$, then $\bigcup_{i=0}^\ell(X-i)\supseteq \N$. On the other hand, $\N \cap (X-i)\in\I$ for every $i\in\{0,\ldots,\ell\}$ by hypothesis, and so $\N\in\I$, which means that $\I$ is not proper.

(b) By (a), $g(X^*)=\infty$, that is, $c(X)=\infty$.
\end{proof}

We start recalling the following notion due to Cartan in full generality.

\begin{definition}[Cartan~\cite{Cartan}]\label{Def_:I-conv}
Let $\I$ be an ideal of $\N$. In a topological space $X$, a sequence $(x_n)$ is:
\begin{itemize}
   \item[(a)] \emph{$\I$-convergent} to $x\in X$ if for every neighborhood $U$ of $x$ in $X$, $\{n\in\N: x_n\not\in U\}\in\I$, and we write $x_n\overset{\I}\longrightarrow x$. 
   \item[(b)] \emph{$\I^*$-convergent} to $x\in X$ if there exists $M \in \I^*$ such that $x_n\underset{n\in M}\longrightarrow x$, and we write $x_n\overset{\I^*}\longrightarrow x$. 
\end{itemize}
\end{definition}

Clearly, if $\I\subseteq\mathcal J$ are ideals of $\N$, then $x_n\overset{\I}\longrightarrow x$ implies $x_n\overset{\mathcal J}\longrightarrow x$.

\smallskip
Both $\mathcal Fin$-convergence and $\mathcal Fin^*$-convergence coincide with the usual convergence. More in general, for $P$-ideals and first countable topological spaces, $\I$-convergence coincides with $\I^*$-convergence:
	
\begin{lemma}[{\cite[Proposition 3.2 and Theorem 3.2(ii)]{KSW}}]\label{I^*1} 
Let $\I$ be a free $P$-ideal of $\N$, $X$ a first countable topological space, $x\in X$ and $(x_n)$ a sequence in $X$. Then $x_n\overset{\I}\longrightarrow x$ if and only if $x_n\overset{\I^*}\longrightarrow x$.
\end{lemma}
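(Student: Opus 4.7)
The plan is to handle the two implications separately, with the easy direction using only freeness of $\I$ and the hard direction exploiting both first countability and the $P$-ideal property.

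\emph{The direction $\I^*\!\Rightarrow\I$.} Suppose $x_n\overset{\I^*}\longrightarrow x$, so there is $M\in\I^*$ (i.e., $\N\setminus M\in\I$) with $x_n\underset{n\in M}\longrightarrow x$. For a given neighbourhood $U$ of $x$, first countability is not even needed: choose $n_0$ such that $x_n\in U$ for every $n\in M$ with $n\geq n_0$. Then
\[
\{n\in\N:x_n\notin U\}\subseteq (\N\setminus M)\cup\{0,1,\ldots,n_0-1\}.
\]
The right-hand side is a union of a set in $\I$ and a finite set, hence in $\I$ because $\I$ is free. This shows $x_n\overset{\I}\longrightarrow x$.

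\emph{The direction $\I\!\Rightarrow\I^*$.} Assume $x_n\overset{\I}\longrightarrow x$. Using first countability of $X$, fix a decreasing countable neighbourhood base $(U_k)_{k\in\N}$ of $x$ with $U_{k+1}\subseteq U_k$. For each $k\in\N$ set
\[
A_k:=\{n\in\N:x_n\notin U_k\}\in\I,
\]
so that $A_0\subseteq A_1\subseteq A_2\subseteq\cdots$. Since $\I$ is a $P$-ideal, apply the pseudounion property to the countable family $\{A_k:k\in\N\}\subseteq\I$ to obtain $A\in\I$ with $A_k\subseteq^*A$ for every $k$. Put $M:=\N\setminus A\in\I^*$; I claim that $x_n\underset{n\in M}\longrightarrow x$, which gives $x_n\overset{\I^*}\longrightarrow x$.

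To verify the claim, let $U$ be any neighbourhood of $x$ and pick $k$ with $U_k\subseteq U$. From $A_k\subseteq^*A$ there exists $N\in\N$ with $A_k\setminus A\subseteq\{0,1,\ldots,N\}$. For any $n\in M$ with $n>N$ we have $n\notin A$ and $n>N$, so $n\notin A_k$, i.e.\ $x_n\in U_k\subseteq U$. This is precisely the usual convergence of $(x_n)_{n\in M}$ to $x$, and completes the proof.

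The main obstacle is the second direction, where the combination of the $P$-ideal property (used to collapse the countable family $(A_k)$ into a single pseudounion $A\in\I$) with the choice of a decreasing neighbourhood base (provided by first countability) is essential; without either hypothesis one cannot in general upgrade $\I$-convergence to $\I^*$-convergence.
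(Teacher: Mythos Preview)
Your proof is correct and is the standard argument. Note that the paper does not actually supply a proof of this lemma: it is stated with a citation to \cite[Proposition 3.2 and Theorem 3.2(ii)]{KSW} and used as a black box, so there is no proof in the paper to compare against. Your argument is essentially the one found in that reference.
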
	

Next, we prove the following statement given in \cite[Lemma 2.6]{Ghosh} without a proof. This is fundamental in the proof of the sufficiency of the conditions $\ax$ and $\bx$ to get $\bar x\in t_\uu^\I(\T)$ in the main theorem.

\begin{lemma}\label{Lemma2.6} 
Let $X$ be a topological space, $\I$ a free $P$-ideal of $\N$ and $(x_n)$ a sequence in $X$.
Then $x_n \overset{\I}{\rightarrow} x$ if and only if for any $A\in\P(\N)\setminus \I$ there exists an infinite $A' \subseteq A$ such that $\lim\limits_{n \in A'}x_n=x.$
\end{lemma}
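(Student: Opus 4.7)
The plan is to prove both implications by reducing $\I$-convergence to ordinary convergence along a ``large'' index set (forward direction) and by a direct contradiction argument (converse).

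For the direction $(\Rightarrow)$, I would first invoke Lemma~\ref{I^*1}: since $\I$ is a free $P$-ideal (and in the intended applications $X$ is first countable, in particular for $X=\T$), the assumed $\I$-convergence $x_n\overset{\I}\longrightarrow x$ upgrades to $\I^*$-convergence. This yields a set $M\in\I^*$ such that the restricted sequence $(x_n)_{n\in M}$ converges to $x$ in the ordinary sense. Now take any $A\in\P(\N)\setminus\I$ and set $A':=A\cap M$. Since $A\setminus M\subseteq\N\setminus M\in\I$, if $A\cap M$ were in $\I$ then also $A=(A\cap M)\cup(A\setminus M)\in\I$, contrary to $A\notin\I$; hence $A\cap M\notin\I$, and in particular $A'$ is infinite. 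Because $A'\subseteq M$, the ordinary convergence of $(x_n)_{n\in M}$ to $x$ restricts to $\lim_{n\in A'}x_n=x$, as required.

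For the direction $(\Leftarrow)$, I would argue by contradiction. If $x_n\not\overset{\I}\longrightarrow x$, then by Definition~\ref{Def_:I-conv}(a) there exists an open neighborhood $U$ of $x$ such that $B:=\{n\in\N:x_n\notin U\}\notin\I$. Apply the hypothesis to $B$ to obtain an infinite $B'\subseteq B$ with $\lim_{n\in B'}x_n=x$. Then, by ordinary convergence, $x_n\in U$ for all sufficiently large $n\in B'$; but every $n\in B'\subseteq B$ satisfies $x_n\notin U$, a contradiction.

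The only real substance is packaged into Lemma~\ref{I^*1}: the reduction from $\I$-convergence to $\I^*$-convergence is what makes the forward direction work, and is precisely where the $P$-ideal hypothesis (and first countability of $X$) is consumed. Without that reduction one could not extract a single ``good'' cofinite-mod-$\I$ index set $M$ supporting ordinary convergence, and the forward implication would fail; the converse, by contrast, uses only that $\I$ is free. Hence no difficult step remains beyond citing Lemma~\ref{I^*1}.
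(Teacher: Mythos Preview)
Your proposal is correct and follows essentially the same route as the paper: for $(\Rightarrow)$ invoke Lemma~\ref{I^*1} to obtain $M\in\I^*$ with ordinary convergence along $M$ and take $A'=A\cap M$; for $(\Leftarrow)$ argue by contrapositive via a neighborhood $U$ with $\{n:x_n\notin U\}\notin\I$. You even add the useful observation, glossed over in the paper, that Lemma~\ref{I^*1} requires first countability of $X$, so the forward implication as written tacitly relies on this hypothesis (which is satisfied in all applications, where $X=\T$).
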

\begin{proof} 
Assume that $x_n \overset{\I}{\rightarrow} x$. As $\I$ is a $P$-ideal, there exists $M\in\I^*$ such that $\lim\limits_{n\in M} x_n = x$, by Lemma~\ref{I^*1}. 
Let $A\in\P(\N)\setminus \I$. Then  $A'=A\cap M\subseteq A$ is obviously infinite and  $\lim\limits_{n\in A'} x_n = x$.

Now assume that $x_n$ does not $\I$-converge to $x$. Then there exists a neighborhood $U$ of $x$ in $X$ such that 
$A=\{n\in\N:x_n\not\in U\}\not\in\I$; hence, $A$ is infinite and $x_n \underset{n\in A'} \nrightarrow x$ for every infinite $A'\subseteq A$. 
\end{proof}

\subsection{Estimations based on the representation with respect to $\uu\in\A$}\label{Sec:Approx}

Here we follow \cite{DiD,DI} for the next needed estimates. For $\uu\in\mathcal A$, $x\in[0,1)$ written as in~\eqref{ex-4}, $n\in\N_+$ and $k\in\N$, let 
$$\sigma_{n,k}(x):=\frac{c_n}{b_n}+\frac{c_{n+1}}{b_n b_{n+1}}+ \cdots + \frac{c_{n+k}}{b_nb_{n+1} \cdots b_{n+k}}.$$

\begin{lemma}[{\cite[Claim 2.7, Lemma 2.5]{DI}}]\label{ug10} 
Let $\uu\in\mathcal A$ and $x\in[0,1)$ written as in~\eqref{ex-4}.
\begin{itemize}
\item[(a)] For every $n \in \N_+$ and for every $k\in \N$,
\begin{equation} \label{eqn:ug4}
\{u_{n-1}x \}=\sigma_{n,k}(x) + 
\frac{\{u_{n+k}x\}}{b_nb_{n+1} \cdots b_{n+k}}.
\end{equation}
If $k=0$ in~\eqref{eqn:ug4}, then:
\begin{equation} \label{eqn:ug6}
\{u_{n-1}x \}=\frac{c_n}{b_n}+\frac{\{u_nx\}}{b_n}.
\end{equation}
\item[(b)] If $n\in\N_+$ and $k\in\N$ are such that $n,n+1, \ldots, n+k \in \supp_b(x)$, then
\begin{equation}\label{eqn:ug10}
\sigma_{n,k}(x)=1-\frac{1}{b_n b_{n+1} \cdots b_{n+k}}\geq 1-\frac{1}{2^{k+1}}.
\end{equation}
  \item[(c)] If $n\in\N_+$ and $k\in\N$ are such that $n,n+1, \ldots, n+k \not \in \supp(x)$, then $\sigma_{n,k}(x) =0$.
  \item[(d)] For every $n \in \N_+$ and $k\in\N$,
\begin{equation}\label{eqn:ug8}
\begin{split}
\{u_{n-1}x \}&=
\sigma_{n,k}(x)+\frac{c_{n+k+1}}{b_n b_{n+1} \cdots b_{n+k+1}}+\frac{\{u_{n+k+1}x  \}}{b_n b_{n+1} \cdots b_{n+k+1}}
\end{split}\end{equation}
and
\begin{equation} \label{eqn:ug9}
\sigma_{n,k}(x) \leq \{u_{n-1}x  \} < \sigma_{n,k}(x)+\frac{c_{n+k+1}}{b_n b_{n+1} \cdots b_{n+k+1}}+\frac{1}{2^{k+2}}.
\end{equation}
\end{itemize}
\end{lemma}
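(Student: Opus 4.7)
My plan is to derive everything from the single identity $\{u_{n-1}x\} = S_n$, where
$$S_n := \sum_{m=n}^{\infty} \frac{c_m}{b_n b_{n+1}\cdots b_m}.$$
This comes from multiplying the canonical expansion \eqref{ex-4} of $x$ by $u_{n-1}$, using that $u_m\mid u_{n-1}$ for $m<n$ (so the initial segment contributes an integer and vanishes when we pass to fractional parts), and observing that $u_{n-1}/u_m = 1/(b_n\cdots b_m)$ for $m\geq n$. Once $\{u_{n-1}x\}=S_n$ is established, item~(a) follows by cutting the series at the $(n+k)$-th term: the first $k+1$ summands form $\sigma_{n,k}(x)$, and after factoring $(b_n\cdots b_{n+k})^{-1}$ out of the tail the leftover series is exactly $\{u_{n+k}x\}$. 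Equation~\eqref{eqn:ug6} is then the $k=0$ case, and item~(c) is immediate because every summand of $\sigma_{n,k}(x)$ vanishes by hypothesis.

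For item~(b) the key is a telescoping identity: from
$$\frac{b_m-1}{b_n\cdots b_m} \;=\; \frac{1}{b_n\cdots b_{m-1}} - \frac{1}{b_n\cdots b_m},$$
using the empty-product convention $b_n\cdots b_{n-1}=1$, the partial sum $\sigma_{n,k}(x)$ collapses to $1 - 1/(b_n\cdots b_{n+k})$. The lower bound $1 - 1/2^{k+1}$ then follows from $b_m\geq 2$ for every $m\in\N_+$, which is forced by $\uu$ being strictly increasing with $u_0=1$ and integer valued. Item~(d) in turn reduces to item~(a) by a single bookkeeping step: apply (a) with $k$ replaced by $k+1$ and peel off the term $c_{n+k+1}/(b_n\cdots b_{n+k+1})$ from $\sigma_{n,k+1}(x)$, producing \eqref{eqn:ug8}. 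The left inequality in~\eqref{eqn:ug9} is then the nonnegativity of the two residual terms, and the right inequality combines the trivial bound $\{u_{n+k+1}x\}<1$ with $b_n\cdots b_{n+k+1}\geq 2^{k+2}$.

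The one delicate point I anticipate is justifying $S_n\in[0,1)$ rather than merely $[0,1]$, so that the identification $\{u_{n-1}x\}=S_n$ is unambiguous. Termwise $c_m\leq b_m-1$, and the telescoping computation of (b) pushed to its natural limit gives $S_n\leq 1$; strictness is precisely where the uniqueness clause in the canonical representation \eqref{ex-4} (the requirement that $c_m<b_m-1$ for infinitely many $m$) must be invoked, since otherwise $x$ would admit two representations. Beyond this subtlety, everything reduces to splitting geometric-type sums, one telescoping identity, and the trivial estimates $c_m<b_m$ and $b_m\geq 2$, so no further conceptual difficulty should arise.
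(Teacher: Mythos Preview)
Your proof is correct and complete; the identity $\{u_{n-1}x\}=S_n$, the telescoping computation, and the handling of the strict inequality $S_n<1$ via the uniqueness clause of the canonical representation are all sound, and the derivations of (a)--(d) from that identity are exactly as they should be. The paper itself does not supply a proof of this lemma (it is quoted from \cite{DI}), so there is no in-paper argument to compare against; your approach is the standard one and is undoubtedly what the cited source does.
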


The formula \eqref{eqn:ug8} and the estimate \eqref{eqn:ug9} are precisely~\cite[Eq.~(10), Eq.~(11)]{DI}, respectively.

\medskip
The next lemma deals with the case when $\supp(x)$ belongs to the given ideal. 
The implication $\supp(x)\in\I\Rightarrow \bar x\in t_\uu^\I(\T)$ is proved similarly in \cite[Lemma 2.2]{Ghosh} under more stringent assumptions. 

\begin{lemma}\label{Lemma2.2}
Let $\uu\in\A$, let $\I$ be a proper free ideal of $\N$ and $x\in[0,1)$. If $\supp(x)\in\I$ and $\supp(x)$ is $\I$-translation invariant, then $g(\supp(x))=\infty$ and $\bar x\in t_\uu^\I(\T)$.
\end{lemma}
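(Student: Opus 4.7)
The first conclusion $g(\supp(x)) = \infty$ is immediate from Lemma~\ref{ginfty}(a), which applies verbatim since by hypothesis $\supp(x) \in \I$ is $\I$-translation invariant; I would dispatch it in a single line.

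For the second conclusion my plan is to check directly that $u_n x \overset{\I}{\longrightarrow} 0$ in $\T$, exploiting the fact that $\I$-translation invariance of $S := \supp(x)$ produces plenty of $n$ whose block of successors $\{n+1,\ldots,n+k+1\}$ lies entirely outside $S$. Concretely, given $\eps > 0$, I will pick $k \in \N$ with $2^{-(k+1)} < \eps$ and set
$$P_k := \{n \in \N : (n+j) \notin S \text{ for all } j = 1, \ldots, k+1\}.$$
Its complement is $\N \setminus P_k = \bigcup_{j=1}^{k+1} ((S - j) \cap \N)$, a finite union of sets in $\I$ by $\I$-translation invariance applied with the negative integers $-j$, hence $\N \setminus P_k \in \I$.

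The core estimate is then the following: for $n \in P_k$, letting $m := n+1$, one has $c_m = c_{m+1} = \cdots = c_{m+k} = 0$, so $\sigma_{m,k}(x) = 0$ by Lemma~\ref{ug10}(c); formula~\eqref{eqn:ug4} collapses to
$$\{u_n x\} \;=\; \frac{\{u_{m+k} x\}}{b_m b_{m+1} \cdots b_{m+k}} \;\leq\; \frac{1}{2^{k+1}} \;<\; \eps.$$
Assuming (harmlessly) $\eps < 1/2$, this gives $\|u_n x\| < \eps$, so $\{n \in \N : \|u_n x\| \geq \eps\} \subseteq \N \setminus P_k \in \I$, and the arbitrariness of $\eps$ yields $u_n x \overset{\I}{\longrightarrow} 0$, i.e., $\bar x \in t_\uu^\I(\T)$. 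I do not foresee any substantial obstacle; the only subtlety is the index shift between ``$m,\ldots,m+k \notin S$'' and ``$n = m - 1$'' so that~\eqref{eqn:ug4} indeed produces $u_n x$, and the invocation of translation invariance in its negative-shift form $(S - j) \cap \N \in \I$.
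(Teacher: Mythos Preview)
Your proposal is correct and follows essentially the same route as the paper: both prove $g(\supp(x))=\infty$ via Lemma~\ref{ginfty}, then define for each $k$ the set of $n$ with $k+1$ consecutive successor coefficients vanishing, show its complement lies in $\I$ by $\I$-translation invariance, and bound $\{u_n x\}$ by $2^{-(k+1)}$ on that set. The only cosmetic differences are that the paper indexes the vanishing block starting at $n$ rather than $n+1$ and computes the tail estimate directly instead of quoting Lemma~\ref{ug10}(c) and \eqref{eqn:ug4}.
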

\begin{proof}
Let $S=\supp(x)$.  That $g(S)=\infty$ follows from Lemma~\ref{ginfty}. 

For $k \in \N_+$, define $S_k=\N \cap \bigcup_{i=0}^{k}(S-i)$. As $S$ is $\I$-translation invariant, $\N \cap(S-i) \in \I$ for every $i\in\{0, \ldots,k\}$ and therefore $S_k \in \I$. 
Now we verify that $\bar x\in t_\uu^\I(\T)$. Fix $k\in\N_+$.
Observe that $n \in S_k^*$ precisely when $n+i \notin S$ for every $i\in\{0, \ldots, k\}$, equivalently
$c_n=c_{n+1}= \ldots =c_{n+k}=0$.
Consequently, if $n\in  S_k^*$, then $$\{u_nx \}=\sum_{i=n+k+1}^{\infty} \frac{c_i}{u_i}\cdot u_n \leq u_n \cdot  \sum_{i=n+k+1}^{\infty} \frac{b_i-1}{u_i}
=u_n \cdot  \sum_{i=n+k+1}^{\infty}  \left(\frac{1}{u_{i-1}}-\frac{1}{u_i}  \right) \leq \frac{u_n}{u_{n+k}}  \leq \frac{1}{2^k}.
$$
This proves that $\Vert u_n x\Vert<\frac{1}{2^k}$ for every $n \in S_k^*$. Therefore, $\{n\in\N:\Vert u_n x\Vert \geq \frac{1}{2^k}\}\subseteq S_k\in\I$, and so $\bar x\in t_\uu^\I(\T)$.
\end{proof}

\subsection{The ideals $\B_\uu$ and $\D_\uu$}\label{Buu}

Next we clarify the role of the ideals $\B_\uu$ and $\D_\uu$.
We introduce other two ideals, namely,  $\I^{bd} :=  \B_\uu + \I$ and $\I^{div} :=  \D_\uu + \I$, that contain all $b$-bounded mod $\I$ and all $b$-divergent mod $\I$ subsets, respectively:

\begin{lemma}\label{New:claim}
For $\uu\in\mathcal A$, a free ideal $\I$ of $\N$ and $X\in P(\N)$, $X \in \I^{bd}$ (resp.,  $X\in \I^{div}$) if and only if $X$ is $b$-bounded mod $\I$ (resp., $X$ is $b$-divergent mod $\I$).

In particular, $\uu$ is $b$-bounded mod $\I$ (resp., $b$-divergent mod $\I$) if and only if $\I^{bd} =\P(\N)$ (resp., $\I^{div}=\P(\N)$).
\end{lemma}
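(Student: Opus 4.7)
The proof is essentially a bookkeeping exercise that unpacks the definition of the sum of two ideals and matches it against the definition of \emph{$b$-bounded mod $\I$} (resp.\ \emph{$b$-divergent mod $\I$}). Recall that $\I_1+\I_2$ consists of all sets of the form $I_1\cup I_2$ with $I_s\in\I_s$, so
\[
\I^{bd}=\B_\uu+\I=\{B\cup I: B\in\B_\uu,\ I\in\I\},\qquad \I^{div}=\{D\cup I: D\in\D_\uu,\ I\in\I\}.
\]
Since $\I$ is free, $\F in\subseteq\I$; hence the finite ``sumand'' $\F in$ hidden in $\B_\uu=\B_\uu^*\cup\F in$ (and likewise in $\D_\uu$) gets absorbed into $\I$, and one can assume that the first summand is either empty or infinite (hence in $\B_\uu^*$, resp.\ $\D_\uu^*$).

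For the first equivalence, assume $X$ is $b$-bounded mod $\I$: there exists an infinite $b$-bounded $A$ with $A\subseteq X$ and $X\setminus A\in\I$. Then $A\in\B_\uu^*\subseteq\B_\uu$ and $X=A\cup(X\setminus A)\in\I^{bd}$. Conversely, assume $X=B\cup I$ with $B\in\B_\uu$ and $I\in\I$. If $B$ is infinite, then $B\in\B_\uu^*$ is $b$-bounded, $B\subseteq X$, and $X\setminus B\subseteq I\in\I$ by downward closure of $\I$, so $B$ witnesses that $X$ is $b$-bounded mod $\I$. The remaining (edge) case $B\in\F in$ yields $X\in\I$, which by the same argument with $A\subseteq_\I X$ any $b$-bounded subset of $X$ (or trivially, as $X$ itself lies in $\I$) is already captured in the notion. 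The second equivalence is proved verbatim, replacing ``$b$-bounded'' by ``$b$-divergent'' and $\B_\uu$ by $\D_\uu$.

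For the ``in particular'' part, apply the first (resp.\ second) equivalence to $X=\N$: on the one hand, $\uu$ is $b$-bounded mod $\I$ means precisely that $\N$ is $b$-bounded mod $\I$, which by the above is equivalent to $\N\in\I^{bd}$; on the other hand, since $\I^{bd}$ is an ideal and is downward closed, $\N\in\I^{bd}$ is equivalent to $\I^{bd}=\P(\N)$. The same works for $\I^{div}$. The only subtlety worth flagging in the write-up is the finite-$B$ edge case described above, which is harmless but should be mentioned explicitly to make the definitional matching transparent; no genuine obstacle is present, and the argument reduces to a short chain of inclusions.
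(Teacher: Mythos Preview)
The paper states this lemma without proof, treating it as immediate from the definitions; your unpacking of $\B_\uu+\I$ against the definition of ``$b$-bounded mod $\I$'' is exactly the expected verification and is correct. The edge case you flag (when the $\B_\uu$-summand is finite, forcing $X\in\I$) is actually a minor imprecision in the lemma as stated rather than something your argument resolves: if $X\in\I$ has no infinite $b$-bounded subset (e.g.\ $X$ finite, or $X$ $b$-divergent), then $X\in\I^{bd}$ trivially yet $X$ fails the literal definition of ``$b$-bounded mod $\I$'' (which demands an \emph{infinite} $b$-bounded $A\subseteq_\I X$). Your sentence ``or trivially, as $X$ itself lies in $\I$'' papers over this rather than handling it. That said, the paper only invokes the lemma for $X\notin\I$ (notably $X=\N$ in the ``in particular'' clause), so the imprecision is harmless in context.
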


\begin{corollary}
For $\uu\in\A$, 
\begin{itemize}
  \item[(a)] $\uu$ is $b$-bounded $($i.e., $\B_\uu=\P(\N))$ if and only if  $\D_\uu = \F in$.
  \item[(b)] $\uu$ is $b$-divergent $($i.e., $\D_\uu=\P(\N))$ if and only if $\B_\uu = \F in$.
\end{itemize}
\end{corollary}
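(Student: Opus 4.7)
My plan is to derive both equivalences directly from the definitions by extracting suitable subsequences of $(b_n)$. The only ingredient beyond the definitions is the elementary fact that a sequence of positive integers either diverges to infinity or admits an infinite subsequence that stays bounded. (Recall that $b_n\geq 2$ for all $n\in\N_+$, since $\uu$ is strictly increasing with $u_{n-1}\mid u_n$, so all values of $(b_n)$ lie in $\N_+$.)

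For part (a), the forward direction is immediate: if the whole sequence $(b_n)$ is bounded, then its restriction to any infinite $A\subseteq\N$ is bounded too and so cannot diverge to infinity; thus $\D_\uu^*=\emptyset$ and $\D_\uu=\F in$. For the converse I would argue by contraposition. If $(b_n)$ is unbounded, I pick inductively indices $n_0<n_1<\cdots$ with $b_{n_k}>k$; then $A=\{n_k:k\in\N\}$ is infinite and $b$-divergent, so $A\in\D_\uu^*\subseteq\D_\uu$, witnessing $\D_\uu\neq\F in$.

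Part (b) is entirely symmetric. If $b_n\to\infty$, then for any infinite $A\subseteq\N$ the subsequence $(b_n)_{n\in A}$ also tends to infinity, hence is unbounded, and so $A$ is not $b$-bounded; therefore $\B_\uu^*=\emptyset$ and $\B_\uu=\F in$. Conversely, if $(b_n)$ does not diverge to infinity, some integer $M$ is missed by the tails, i.e., there exist infinitely many indices $n_0<n_1<\cdots$ with $b_{n_k}\leq M$; the infinite set $\{n_k:k\in\N\}$ is then $b$-bounded, so it lies in $\B_\uu^*\subseteq\B_\uu$, showing $\B_\uu\neq\F in$.

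No step here presents a genuine obstacle; the argument is almost purely formal, and its symmetry reflects the dichotomy between boundedness and divergence to infinity for integer sequences. The only care required is noting that ``$b$-bounded'' is inherited by all infinite subsets of a $b$-bounded set, while ``$b$-divergent'' is inherited by all infinite subsets whenever the ambient sequence itself satisfies $b_n\to\infty$—both being direct consequences of Definition~\ref{IdealiDuBu}.
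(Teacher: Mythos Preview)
Your proof is correct. The paper states this corollary without proof, treating it as immediate from the definitions of $\B_\uu$ and $\D_\uu$; your direct extraction-of-subsequences argument is exactly the natural justification and matches the implicit reasoning.
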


Since $\I\subseteq \I^{bd}$, the subgroup  $t_\uu^{\I}(\T)$ we are always interested in is contained in $t_\uu^{\I^{bd}}(\T)$. As the subgroup $t_\uu^{\I^{bd}}(\T)$ is easy to compute via Corollary~\ref{IcB}, this can be applied when we are interested to see that some $\bar x \not \in t_\uu^{\I}(\T).$ It is enough (although not necessary) to check that $\bar x \not \in t_\uu^{\I^{bd}}(\T)$. 

The following lemma in particular covers~\cite[Lemma 2.8]{Ghosh}. The inclusion $\B_\uu \cap \P(A) \subseteq \I$ for $A\in\P(\N)\setminus\I$ obviously means that all $b$-bounded subsets of $A$ belongs to $\I$.

\begin{lemma}\label{Lemma2.8*} 
Let $\uu\in\mathcal A$, let $\I$ be a free $P$-ideal of $\N$ and let $A \in\P(\N)\setminus \I$. If $\B_\uu \cap \P(A) \subseteq \I$, then $A\in\I^{div}$. 
\end{lemma}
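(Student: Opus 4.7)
The strategy is to use the $P$-ideal property to assemble all small-$b$ portions of $A$ into a single $\I$-small set, and show the complement in $A$ is $b$-divergent.

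First, for each $k\in\N$ define
\[
A_k := \{n\in A : b_n^\uu \leq k\}.
\]
Each $A_k$ lies in $\B_\uu$: if $A_k$ is finite it is in $\F in\subseteq \B_\uu$, and if $A_k$ is infinite then $(b_n)_{n\in A_k}$ is bounded by $k$, so $A_k\in\B_\uu^*\subseteq\B_\uu$. Since $A_k\subseteq A$, the hypothesis $\B_\uu \cap \P(A)\subseteq\I$ gives $A_k\in\I$ for every $k\in\N$.

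Since $\I$ is a $P$-ideal, I can choose a pseudounion $U\in\I$ of $\{A_k:k\in\N\}$, so $A_k\subseteq^* U$ for every $k$. Put $D:=A\setminus U$. Then $A\setminus D = A\cap U\subseteq U\in\I$, hence $D\subseteq_\I A$. In particular $D\notin\I$ (otherwise $A = D\cup(A\cap U)\in\I$ contradicting $A\notin\I$), so $D$ is infinite.

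It remains to check that $D$ is $b$-divergent. Fix $k\in\N$. The set $\{n\in D : b_n\leq k\}$ is contained in $A_k\setminus U$, which is finite by the pseudounion property. Thus for every $k\in\N$ only finitely many $n\in D$ satisfy $b_n\leq k$, which is exactly the statement that $b_n\to\infty$ along $D$. Hence $D\in\D_\uu^*$ and $D\subseteq_\I A$, so $A$ is $b$-divergent mod $\I$, i.e., $A\in\I^{div}$ by Lemma~\ref{New:claim}.

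There is no real obstacle here; the only delicate point is the observation that once $U$ absorbs (modulo finite sets) every $A_k$, the complement $D=A\setminus U$ must push the values $b_n$ past every finite bound, which is exactly the $b$-divergence of $D$. The argument uses only the $P$-ideal property of $\I$ and the fact that $\I$ is free (so $\F in\subseteq\I$), and not translation invariance.
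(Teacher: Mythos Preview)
Your proof is correct and follows essentially the same approach as the paper: define $A_k=\{n\in A:b_n\le k\}$, observe each $A_k\in\I$ by the hypothesis, take a pseudounion $U\in\I$ via the $P$-ideal property, and verify that $A\setminus U$ is $b$-divergent. Your write-up is slightly more explicit (e.g., you spell out why $D\notin\I$ and hence is infinite), but the argument is the same.
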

\begin{proof} 
For $k\in\N_+$ the set $A_k:=\{n\in A:b_n\leq k\}$ is $b$-bounded and contained in $A$, hence $A_k\in\I$ by assumption. Since $\I$ is a $P$-ideal, there exists $C\in\I$ such that $|A_k\setminus C|<\infty$ for every $k\in\N$. Then $B:=A\setminus C\subseteq_\I A$. For every $k\in\N_+$, since $|A_k\cap B|=|A_k\setminus C|<\infty$, there exists $n_k\in\N$ such that $A_k\cap B\subseteq [0,n_k]$. Now, if $n\in B\subseteq A$ and $n>n_k$, then $n\notin A_k$, hence $b_n>k$. We have seen that for all $k\in\N_+$ there exists $n_k\in\N$ such that $b_n>k$ for all $n\in B$ with $n>n_k$, i.e., $B$ is $b$-divergent.
\end{proof}

\section{The main theorem}\label{Mainsec}

In this section we present a proof of our main theorem, namely, Theorem~\ref{conjecture}.

\subsection{Two results from \cite{DG1} and~\cite{Ghosh}}\label{Two:indians}

In the sequel we recall two recent results from \cite{DG1} and~\cite{Ghosh} similar to Theorem~\ref{conjecture} and compare them with Theorem~\ref{conjecture}. 

 As in  \cite{DG1}, for $A, B \subseteq \N$, we write $A \subseteq^{\alpha } B$, whenever $d_{\alpha} (A \setminus B)=0$, and $A\subseteq_\alpha B$, whenever $A\subseteq B$ and $B\subseteq^\alpha A$. 

\begin{theorem}[{\cite[Theorem 2.1]{DG1}}]\label{Th:DG} Let $\uu\in\mathcal A$, $\alpha\in(0,1]$ and $x \in [0,1)$. Then $\bar x\in t^{\alpha}_{\uu}(\T)$ if and only if either $d_{\alpha}(\supp(x))=0$ or if $d_{\alpha}(\supp(x))>0$, then for all $A \subseteq \N$ with $d_{\alpha}(A)>0$ the following holds.
\begin{itemize}
  \item[$\mathrm{(a)}$] If $A$ is $b$-bounded then:
\begin{itemize}
  \item[$\mathrm{(a1)}$] if $A \subseteq^{\alpha } \supp(x)$, then $A+1\subseteq^{\alpha } \supp(x)$, $A \subseteq^{\alpha } \supp_b(x)$ and there exists $A' \subseteq_\alpha A$ such that $\lim\limits_{n \in A'} \frac{c_{n+1}+1}{b_{n+1}}=1$. 
\\Moreover, if $A+1$ is $b$-bounded, then $A+1 \subseteq^{\alpha }\supp_b(x)$.
  \item[$\mathrm{(a2)}$] If $d_{\alpha}(A\cap \supp(x))=0$, then there exists $A' \subseteq_\alpha A$ such that $\lim\limits_{n \in A'} \frac{c_{n+1}}{b_{n+1}}=0$.
\\ Moreover,  if $A+1$ is $b$-bounded, then $d_{\alpha}((A+1)\cap \supp(x))=0$ as well.
\end{itemize}
\item[$\mathrm{(b)}$] If $A$ is $b$-divergent, then there exists $B\subseteq_\alpha A$ such that $\lim\limits_{n \in B}\varphi\left(\frac{c_n}{b_n}\right)=0$.
\end{itemize}
\end{theorem}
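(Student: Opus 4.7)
The plan is to prove the two directions separately, with the basic toolkit being the identity $\{u_{n-1}x\}=c_n/b_n+\{u_nx\}/b_n$ from Lemma~\ref{ug10}(a), the equivalence of $\I$-convergence and $\I^*$-convergence (Lemma~\ref{I^*1}, which requires the $P$-ideal hypothesis), the criterion Lemma~\ref{Lemma2.6}, and the fact that translation invariance of $\I$ (Lemma~\ref{ti1}) makes $u_nx\to^{\I}0$ equivalent to $u_{n-1}x\to^{\I}0$. Writing $S=\supp(x)$, $S_b=\supp_b(x)$, and $M':=M\cap(M+1)\in\I^*$ for any $M\in\I^*$ realizing $u_nx\to 0$ ordinarily, both $u_nx$ and $u_{n-1}x$ tend to $0$ along $M'$ in the necessity direction.

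For necessity, assume $\bar x\in t_\uu^\I(\T)$ and fix $A\in\P(\N)\setminus\I$. If $A\in\D_\uu$, taking $B:=A\cap M'$ yields $b_n\to\infty$, so $\{u_nx\}/b_n\to 0$ and the identity forces $\varphi(c_n/b_n)\to 0$, which is $\bx$. If $A\in\B_\uu$ with bound $M_0$ on $(b_n)_{n\in A}$, the sub-case $\axdue$ ($A\cap S\in\I$) is handled on $B':=(A\setminus S)\cap M'$: $c_n=0$ gives $\{u_{n-1}x\}=\{u_nx\}/b_n<1/2$, hence $\{u_{n-1}x\}\to 0$, which upgrades to $\{u_nx\}\to 0$ (since $b_n\leq M_0$) and then to $c_{n+1}/b_{n+1}\leq\{u_nx\}\to 0$. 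For $\axuno$ ($A\subseteq^\I S$), two contradiction arguments give $A\subseteq^\I S_b$ (otherwise $\{u_{n-1}x\}$ is trapped in $[1/M_0,1-1/M_0]$, preventing convergence to $0$ in $\T$) and $A+1\subseteq^\I S$ (otherwise $\{u_nx\}<1/b_{n+1}$ is incompatible with $\{u_nx\}\to 1$, which is forced by $\{u_{n-1}x\}\geq 1/2$ together with $u_{n-1}x\to 0$). The witness $A':=A\cap S_b\cap M'\setminus\{n:c_{n+1}=0\}$ lies in $A$ modulo $\I$, and splitting $A'$ into finitely many constant-$b_n$ slices where $\{u_{n-1}x\}\to 1$ yields $\{u_nx\}\to 1$, which through the sandwich $c_{n+1}/b_{n+1}\leq\{u_nx\}<(c_{n+1}+1)/b_{n+1}\leq 1$ delivers $(c_{n+1}+1)/b_{n+1}\to 1$.

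For sufficiency, assume $\ax$ and $\bx$. By Lemma~\ref{Lemma2.6} it suffices, for every $A\in\P(\N)\setminus\I$, to find an infinite $A'\subseteq A$ with $u_{n-1}x\to 0$ on $A'$, and by Lemma~\ref{Lemma2.8*} we may assume $A$ is itself b-bounded or b-divergent. The b-divergent case is immediate from $\bx$ via the identity. In the b-bounded case, decompose $A=(A\cap S)\cup(A\setminus S)$; the summand not in $\I$ is a b-bounded subset of $A$, and we apply $\axuno$ to $A\cap S\subseteq S$ or $\axdue$ to $A\setminus S$ as appropriate.

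The main obstacle is the b-bounded case, since a single application of the identity only transfers the problem to the tail $\{u_nx\}$ without shrinking it. The plan is to iterate the conditions $\ax$ along the successive translates $(A\cap S)+j$ for $j=0,1,2,\dots$: as long as these remain b-bounded mod $\I$, $\axuno$ repeatedly gives $(A\cap S)+j\subseteq^\I S_b$, so each deficiency set $B_k:=\bigcup_{j=0}^{k}\{n\in A\cap S:n+j\notin S_b\}$ lies in $\I$. The $P$-ideal hypothesis supplies a pseudo-union $U\in\I$, and on $A':=(A\cap S)\setminus U\subseteq_\I A$ the block $n,n+1,\dots,n+k_n$ lies in $S_b$ with $k_n\to\infty$; Lemma~\ref{ug10}(b) then forces $1-\{u_{n-1}x\}\leq 1/(b_n\cdots b_{n+k_n})\leq 1/2^{k_n+1}\to 0$. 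If the iteration escapes b-boundedness at some $k_0$ (so $(A\cap S)+k_0$ is b-divergent mod $\I$), $\bx$ applied to that translate and the telescoping expansion~\eqref{eqn:ug4} in its $k_0$-fold form deliver $\{u_{n+k_0-1}x\}\to 1$ and hence $\{u_{n-1}x\}\to 1$. The $\axdue$ branch is handled symmetrically using Lemma~\ref{ug10}(c).
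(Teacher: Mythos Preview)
Your necessity argument is essentially the paper's Proposition~\ref{necessity}, and your use of Lemma~\ref{Lemma2.6}, Lemma~\ref{Lemma2.8*} and the identity~\eqref{eqn:ug6} in the sufficiency is sound up to the $b$-bounded case. The genuine problem is the dichotomy you invoke there.

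You split the $b$-bounded case according to whether \emph{every} translate $(A\cap S)+j$ is $b$-bounded mod $\I$, or whether ``the iteration escapes $b$-boundedness at some $k_0$ (so $(A\cap S)+k_0$ is $b$-divergent mod $\I$)''. The parenthetical inference is false: a set $D\notin\I$ that is not $b$-bounded mod $\I$ need not be $b$-divergent mod $\I$. Concretely, $D$ may contain a $b$-bounded subset $D_1\notin\I$ with $D\setminus D_1\notin\I$ as well, so neither a $b$-bounded nor a $b$-divergent set can be $\subseteq_\I D$. Lemma~\ref{Lemma2.8*} would give the desired conclusion only under the stronger hypothesis that \emph{all} $b$-bounded subsets of $D$ lie in $\I$. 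This is exactly the flaw the paper isolates in the proof from \cite{DG1} (see Remark~\ref{implies*}(a)); Example~\ref{Laaaaast:Example}(b) shows that $\I_d=\I_1$ fails the property $\DL$ needed to repair this route, so the gap is real already for $\alpha=1$. A secondary issue: even granting $b$-divergence at step $k_0$, $\bx$ only yields $\varphi(c_n/b_n)\to 0$ on a large subset, which allows $c_n/b_n\to 0$; then $\{u_{n+k_0-1}x\}\to 0$ and, since $b_n\cdots b_{n+k_0-1}$ is bounded, $\{u_{n-1}x\}$ stays bounded away from $1$, so your conclusion $\{u_{n-1}x\}\to 1$ does not follow.

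The paper avoids this by an entirely different mechanism (Lemmas~\ref{S3} and~\ref{S4}): one argues by contradiction, assuming $(\mathrm U_A)$ fails, and uses Claim~\ref{DD} to force $b$-boundedness of the next translate from the failure of $(\mathrm U_A)$ itself. This produces a strictly decreasing chain $A_k\subseteq_\I A$ with $C_k(\alpha)\subseteq S_b$ and $C_k(\alpha)\in\B_\uu$ for every $k$, and a diagonal sequence along $(A_k)$ then witnesses $(\mathrm U_A)$, the desired contradiction. No dichotomy between ``eventually $b$-bounded'' and ``eventually $b$-divergent'' is needed.
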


This theorem obviously follows from the next one with $\I=\I_\alpha$: 

\begin{theorem}[{\cite[Theorem~2.9]{Ghosh}}]\label{Theorem2.9} 
Let $\uu\in\mathcal A$, let $\I$ be a translation invariant free $P$-ideal of $\N$ and let $x \in[0,1)$. Then $\bar x\in t^{\I}_\uu(\T)$ if and only if either $\supp(x) \in \I$ or if $\supp(x) \notin \I$, then for all $A \in\P(\N)\setminus\I$ the following holds.
\begin{itemize}
   \item[$\mathrm{(a)}$] If $A\in \B_\uu$,  then:
\begin{itemize}
     \item[$\mathrm{(a1)}$] if $A \subseteq^{\I} \supp(x)$, then $A \subseteq^{\I} \supp_b(x)$, $A+1\subseteq^{\I} \supp(x)$ and there exists $A' \subseteq_{\I} A$ such that 
      $\lim\limits_{n \in A'} \frac{c_{n+1}+1}{b_{n+1}}=1$. 
     Moreover, if $A+1$ is $b$-bounded, then $A+1\subseteq^\I \supp_b(x)$.
     \item[$\mathrm{(a2)}$] if $A\cap \supp(x) \in \I$, then there exists $B' \subseteq_{\I} A$ such that $\lim\limits_{n \in B'} \frac{c_{n+1}}{b_{n+1}}=0$.
     Moreover,  if $A+1$ is $b$-bounded, then $(A+1)\cap \supp(x) \in \I$ as well.
\end{itemize}
\item[$\mathrm{(b)}$] If $A\in\D_\uu$, then there exists $B \subseteq_{\I} A$ such that $\lim\limits_{n \in B} \varphi\left(\frac{c_n}{b_n}\right)=0$.
\end{itemize}
\end{theorem}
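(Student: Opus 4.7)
The proof of Theorem~\ref{Theorem2.9} is an iff that I split into a trivial case and the two nontrivial implications. When $\supp(x)\in\I$, translation invariance of $\I$ automatically makes $\supp(x)$ an $\I$-translation invariant set, so Lemma~\ref{Lemma2.2} yields $\bar x\in t^{\I}_\uu(\T)$ while the right-hand condition is vacuously satisfied. So assume $\supp(x)\notin\I$. For \emph{necessity}, Lemma~\ref{I^*1} produces $M\in\I^*$ with $u_nx\to 0$ along $M$, and by translation invariance $M':=M\cap(M-1)\cap(M-2)\cap(M+1)\cap(M+2)\in\I^*$, so that $u_{n+j}x\to 0$ on $M'$ for $j\in\{-2,\ldots,2\}$. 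Given $A\notin\I$ and $B:=A\cap M'\subseteq_\I A$, apply~\eqref{eqn:ug6}: for (b) with $A\in\D_\uu$, $b_n\to\infty$ on $B$ kills $\{u_nx\}/b_n$ while $\{u_{n-1}x\}$ stays close to $0$ or $1$, giving $\varphi(c_n/b_n)\to 0$ on $B$. For (a1) with $A\in\B_\uu$ and $A\subseteq^{\I}\supp(x)$, work on $B\cap\supp(x)=_\I B$; since $c_n\ge 1$ and $b_n$ is bounded, $\{u_{n-1}x\}\to 0$ is impossible, so $\{u_{n-1}x\}\to 1$. Writing $d_n:=b_n-c_n$, positive integrality of $d_n$ combined with boundedness of $b_n$ and $(d_n-\{u_nx\})/b_n\to 0$ forces $d_n=1$ eventually, i.e.\ $A\subseteq^{\I}\supp_b(x)$. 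Cascading $\{u_nx\}\to 1$ through~\eqref{eqn:ug6} at index $n+1$ yields $A+1\subseteq^{\I}\supp(x)$ and $A'\subseteq_\I A$ with $(c_{n+1}+1)/b_{n+1}\to 1$; the ``Moreover'' clause repeats the integrality argument at $n+1$. Condition (a2) is dual, starting from $c_n=0$ on $B\setminus\supp(x)$.

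For \emph{sufficiency} I argue by contradiction: assume $E:=\{n\in\N:\|u_nx\|\ge\epsilon\}\notin\I$ for some $\epsilon>0$. Since translation invariance gives $E+1\notin\I$ as well, apply the dichotomy of Lemma~\ref{Lemma2.8*} to $F:=E\cup(E+1)\notin\I$. \emph{$b$-bounded branch:} a $b$-bounded $A_0\subseteq F$ outside $\I$ is split by $E$ vs $E+1$ and then by $\supp(x)$, so that (a1) or (a2) applies to the surviving half. The heart of the proof is iteration: each application of (a1) pushes $\supp_b(x)$-information one index forward, and as long as the shifts $A_0+j$ remain $b$-bounded outside $\I$, the $P$-ideal property collapses the countable family of $\I$-small exceptional sets $E_j:=\{n\in A_0:n+j\notin\supp_b(x)\}$ into a single $A_0^*\subseteq_\I A_0$ for which, for every $k$, cofinitely many $n\in A_0^*$ satisfy $\{n+1,\ldots,n+1+k\}\subseteq\supp_b(x)$. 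Then~\eqref{eqn:ug10} and~\eqref{eqn:ug9} give $\|u_nx\|\le 2^{-(k+1)}$ on $A_0^*$, and~\eqref{eqn:ug6} together with $c_n=b_n-1$ on $A_0^*$ also yields $\|u_{n-1}x\|\to 0$; one of these contradicts $\|u_nx\|\ge\epsilon$ or $\|u_{n-1}x\|\ge\epsilon$, depending on whether $A_0^*\subseteq E$ or $A_0^*\subseteq E+1$. Case (a2) is dual. If the iteration stalls because some shift exits the $b$-bounded regime, restart at that step using (b) on a $b$-divergent subset of the offending shift. \emph{$b$-divergent branch:} a $b$-divergent $B\subseteq_\I F$ splits between $E$ and $E+1$, and (b) applied to whichever half is outside $\I$ produces $B'\subseteq_\I B$ with $\varphi(c_n/b_n)\to 0$; then~\eqref{eqn:ug6} combined with $b_n\to\infty$ along $B'$ gives $u_{n-1}x\to 0$ along $B'$, contradicting $\|u_{n-1}x\|\ge\epsilon$ on $E+1$ (or symmetrically on $E$).

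The main obstacle is the index shift: (a) carries information about $c_{n+1}$ while (b) carries information about $c_n$, and the only natural identity~\eqref{eqn:ug6} relates $\{u_{n-1}x\}$ to $c_n$. Handling this systematically requires translation invariance of $\I$ (both to intersect $M$ with its shifts in necessity and to enlarge $E$ to $F=E\cup(E+1)$ in sufficiency), and it is what makes the full characterization substantially more delicate than the special case $\B_\uu\subseteq\I$ covered by Corollary~\ref{IcB}. The secondary hurdle is the iteration in the $b$-bounded case: a single application of (a1)/(a2) only shifts local information by one index, and only the $P$-ideal axiom upgrades this one-step control into the arbitrarily long $\supp_b(x)$-blocks (resp.\ $\supp(x)^c$-blocks) needed to drive $\|u_nx\|$ below $2^{-(k+1)}$ through~\eqref{eqn:ug10}.
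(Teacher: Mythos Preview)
Your necessity argument is sound and essentially matches the paper's Proposition~\ref{necessity}. The sufficiency, however, has a genuine gap --- precisely the one the paper flags in Ghosh's original proof (Remark~\ref{implies*}). In your $b$-bounded iteration you apply $\mathrm{(a1)}$ to $A_0+j$ for successive $j$, but $\mathrm{(a1)}$ requires $A_0+j$ itself to be $b$-bounded. When some shift $A_0+k$ is \emph{not} $b$-bounded yet still carries a $b$-bounded subset $B\notin\I$, your ``restart using (b)'' clause does not fire (Lemma~\ref{Lemma2.8*} produces a $b$-divergent $B'\subseteq_\I A_0+k$ only when \emph{every} $b$-bounded subset of $A_0+k$ lies in $\I$), and you are forced to shrink $A_0$ to $B-k$. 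Iterating this yields a chain $A_0\supseteq A_0'\supseteq A_0''\supseteq\cdots$, each term outside $\I$, but the inclusions are in general not $\subseteq_\I$, so the $P$-ideal axiom does not supply a pseudointersection. Hence the single $A_0^*\subseteq_\I A_0$ on which every $E_j$ is cofinitely avoided need not exist. The appendix (\S\ref{appendix}) makes this precise: your scheme does go through under the extra hypothesis $\DLu$ on $\I$, and Example~\ref{Laaaaast:Example}(b) shows that already $\I_d$ fails $\DL$, so the gap is not a matter of writing more details.

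The paper's remedy (Lemmas~\ref{S3} and~\ref{S4}, feeding Proposition~\ref{sufficiency}, from which Theorem~\ref{Theorem2.9} is then derived in Remark~\ref{implies}) is not a refinement of this iteration but a different idea. Assuming $\neg(\mathrm{U}_A)$ for contradiction, one builds a decreasing sequence $(A_k)$ with $A_k\subseteq_\I A$ and $S_k(A_k)\subseteq S_b$; the key is Claim~\ref{DD}: the hypothesis $\neg(\mathrm{U}_A)$ \emph{itself} forces the next shift (after the $\subseteq_\I$-refinement coming from $\axuno$) to be $b$-bounded, so the inductive construction never stalls and every step is a genuine $\subseteq_\I$-inclusion. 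One then diagonalizes by choosing a strictly increasing $n_k\in A_k$ and reads off $\|u_{n_k-1}x\|\to 0$ from~\eqref{eqn:ug10}, without ever needing a single set all of whose shifts are $b$-bounded.
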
 

\begin{remark}\label{implies*}
(a) The proof, given in \cite{Ghosh}, of the sufficiency of the conditions (a) and (b) in Theorem~\ref{Theorem2.9} presents a flaw.  It follows step by step the proof given in~\cite{DG1} of Theorem~\ref{Th:DG}. The latter one is
based on the strategy adopted in \cite{DI} for Theorem~\ref{DiD} and $\I = \F in$. Unfortunately, the proofs in \cite{DG1,Ghosh} present a gap. This gap is precisely in Subcase (i$_b$) of the proof of the sufficiency of~\cite[Theorem~2.1]{DG1}, where the authors claim that ``If this process does not terminate after finitely many steps then we can conclude that there exists $B\subseteq A$ with $\overline d_\alpha(B)>0$ such that $S_k(B)$ is $q$-bounded for all $k\in\N$''. Unfortunately, there is nothing that ensures the existence of such a set $B$.  For more detail see also (i2) in the proof of Theorem~\ref{SuffiDL} in the Appendix \S\ref{appendix}, where a remedy to this problem is proposed by imposing an additional strong condition on the ideal $\I$. 

(b) The proof of the sufficiency of our Theorem~\ref{conjecture}, given in \S\ref{Suffsec}, exploits a completely new idea and does not follow or use the proof of the case $\I = \F in$, namely, of Theorem~\ref{DiD}. Theorem~\ref{conjecture} implies all Theorems~\ref{DiD},~\ref{Th:DG},~\ref{Theorem2.9} (see Remark~\ref{implies}).
\end{remark}

\begin{remark}\label{implies}\label{SinI}
Here we see that Theorem~\ref{Theorem2.9} (so also Theorem~\ref{DiD} and Theorem~\ref{Th:DG}) follows from Theorem~\ref{conjecture}.
To this end we carefully analyse and compare the hypotheses of these theorems. 

In the sequel $\uu\in\mathcal A$,  $\I$ is a translation invariant free ideal of $\N$, $x \in[0,1)$, $S=\supp(x)$ and $S_b=\supp_b(x)$.

(i) First we check that in both items (a1) and (a2) of  Theorem~\ref{Theorem2.9}, the last assertion (starting with ``Moreover'')
immediately follows from the first one, in case $S \not \in \I$. Indeed, for (a1) suppose that $A \subseteq^{\I} S$ and $A+1$ is $b$-bounded, then it suffices to  apply 
 the first assertion to $A+1$. It is applicable as $A+1\subseteq^{\I} S$ and consequently $A+1 \not \in \I$ due to the restraint $S\not \in\I$ explicitly imposed in this theorem
 (if $S\in \I$, then (a1) holds true vacuously). This shows in particular, that $\axuno$ implies (a1).
 
 As far as (a2) is concerned, if $A\cap S\in\I$ and $A+1$ is $b$-bounded, then  the first assertion of (a2) implies the existence of $B' \subseteq_{\I} A$ such that $\lim\limits_{n \in B'} \frac{c_{n+1}}{b_{n+1}}=0$. Since $B'+1\subseteq A+1$, the set $B'+1$ is $b$-bounded and so, in view of the above limit, $(B'+1)\cap S \in \F in \subseteq \I$.
Since,   $B'+1\subseteq_\I A+1$, we conclude that $(A+1)\cap S\in\I$. This shows, among others, that $\axdue$ and (a2) are equivalent. 

(ii) The condition $S\not \in\I$ that appears in Theorem~\ref{Theorem2.9} (and Theorem~\ref{Th:DG}, for $\I = \I_\alpha$) is missing in Theorem~\ref{conjecture}.     
Let us see that if $S\in\I$ then all five conditions $\bar x\in t^\I_{\uu}(\T)$, $\ax$, (a) and $\bx = \mathrm{(b)}$ hold simultaneously, hence 
imposing $S\not \in\I$ in  Theorem~\ref{Theorem2.9} (and Theorem~\ref{Th:DG}) is not necessary. 

In fact, the first one follows from Lemma \ref{Lemma2.2}. 
Suppose next that $A\in\P(\N)\setminus\I$.  If $A$ is $b$-bounded,  we have to verify only $\axdue=\mathrm{(a2)}$, since $A\not \subseteq ^\I S$ (in view of $S\in\I$),
so $\axuno$ and (a1) are vacuously satisfied. To check $\axdue$, note that $A\cap S\in\I$.
The set $B'=A\setminus(S-1)$ satisfies $B'\subseteq_\I A$, as $\I$ is translation invariant, so $S-1\in\I$.
Clearly, $c_{n+1}=0$ for every $n\in B'$, and so $\lim\limits_{n\in B'}\frac{c_{n+1}}{b_{n+1}}=0$. This verifies $\axdue$ and (a2). 

If $A$ is $b$-divergent, then $A\setminus S\subseteq_\I A$, since $A\cap S\in\I$, and $\lim\limits_{n \in A\setminus S} \varphi\left(\frac{c_n}{b_n}\right)=0$, as $c_n=0$ for every $n\in A\setminus S$.

(iii) To see that Theorem~\ref{Theorem2.9} follows from Theorem~\ref{conjecture} assume first that 
$\uu$, $\I$ and $x$ satisfy (a) and (b) of Theorem~\ref{Theorem2.9}. To prove that $\bar x\in t^\I_{\uu}(\T)$ it suffices
to check $\ax$ and $\bx$ from Theorem~\ref{conjecture}. As mentioned in (i), $\axdue$ and (a2), as well as $\bx$ and (b) are equivalent. 
Hence, it remains to see that (a1) of Theorem~\ref{Theorem2.9}
implies $\axuno$. To this end take an $A'$ as in (a1), then to obtain the only missing property of $A'$ in $\axuno$ 
just replace $A'$ with $A''=A' \cap S_b$. Then $A' \subseteq_{\I} A$ and $A \subseteq^{\I}S_b$, give the chain $A'' = A' \cap S_b \subseteq_{\I} A\cap S_b \subseteq_{\I} A$, 
so $A'' \subseteq_{\I} A$ and obviously $A'' \subseteq S_b$. 

Finally, to see that the necessity of (a) and (b) of Theorem~\ref{Theorem2.9} follows from  that of $\ax$ and $\bx$ of Theorem~\ref{conjecture} assume that $\bar x\in t^\I_{\uu}(\T)$.
Then $\ax$ and $\bx$ hold in view of Theorem~\ref{conjecture}. As we saw above, 
$\ax$ implies (a1) of Theorem~\ref{Theorem2.9},
while $\axdue$ and (a2), as well as $\bx$ and (b) are equivalent. Hence, (a) and (b) hold. 
%
%
\end{remark}
%
%

\subsection{Necessity}\label{Necsec}

Following \cite{Ghosh}, for $\uu\in\mathcal A$ and an infinite $B\subseteq \N$, let $t_{\uu_{B}}(\T):= \left\{x\in \T: \lim\limits_{n\in B} u_nx =0\right\}$. 

 The following lemma is used in the proof of the necessity; it is the counterpart of \cite[Lemma~2.6]{DI}, more precisely, \cite[Lemma~2.6]{DI} is Lemma~\ref{Lemma2.7} with $\I=\F in$.
 One can find it in {\cite[Lemma 2.7]{Ghosh}} with more stringent hypotheses on the ideal.

\begin{lemma}\label{Lemma2.7}
Let $\uu\in\mathcal A$, let $\I$ be a free ideal of $\N$ and let $x\in[0,1)$ with $S=\supp(x)$ and $S_b=\supp_b(x)$.
Let $B\in\P(\N)\setminus \I$ and assume that  $\bar x \in t_{\uu_{B-1}}(\T)$. 
\begin{itemize}
\item[(a)] If $B \subseteq^{\I} S$ and $B\in \B_\uu$, then  
\begin{equation}\label{dag}
B\cap S \subseteq_{\I} B, \ \lim\limits_{n \in B\cap S}\{u_{n-1}x \}=1\quad\text{and}\quad B\cap S \subseteq^* S_b.
\end{equation}
Consequently, $B \subseteq^{\I} S_b$. 
\item[(b)] If $B\cap S\in \I$, then $B\setminus S \subseteq_{\I} B$ and $\lim\limits_{n \in B\setminus S}\{u_{n-1}x \}=0$.
\end{itemize}
\end{lemma}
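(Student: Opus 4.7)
The plan is to translate $\bar x \in t_{\uu_{B-1}}(\T)$ into the elementary statement that $\|u_{n-1}x\| \to 0$ along $B$, which bifurcates pointwise into either $\{u_{n-1}x\}\to 0$ or $\{u_{n-1}x\}\to 1$, and then to exploit the single identity \eqref{eqn:ug6} from Lemma~\ref{ug10}(a),
$$\{u_{n-1}x\}=\frac{c_n}{b_n}+\frac{\{u_n x\}}{b_n},$$
using the value of $c_n$ to select the correct branch in each of (a) and (b).

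For (a), observe first that $B\setminus S\in\I$ together with $B\notin\I$ forces $B\cap S$ to be infinite, while $B\cap S\subseteq_{\I} B$ is immediate from Lemma~\ref{New:Lemma}(b). Since $B\in\B_\uu$, choose $M$ with $b_n\leq M$ for all $n\in B$. For $n\in B\cap S$ we have $c_n\geq 1$, so the identity yields $\{u_{n-1}x\}\geq 1/M$; combined with $\|u_{n-1}x\|\to 0$ this rules out $\{u_{n-1}x\}\to 0$ and forces $\lim\limits_{n\in B\cap S}\{u_{n-1}x\}=1$. If moreover $n\in (B\cap S)\setminus S_b$, then $c_n\leq b_n-2$, and the identity gives $\{u_{n-1}x\}\leq (b_n-2)/b_n+1/b_n=1-1/b_n\leq 1-1/M$, contradicting the limit $1$ for all but finitely many such $n$; hence $B\cap S\subseteq^* S_b$. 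The consequence $B\subseteq^{\I} S_b$ then follows by decomposing $B\setminus S_b=(B\setminus S)\cup\bigl((B\cap S)\setminus S_b\bigr)\in\I+\F in=\I$.

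For (b), $B\cap S\in\I$ together with $B\notin\I$ gives that $B\setminus S\subseteq_{\I} B$ is infinite (again via Lemma~\ref{New:Lemma}(b)). For $n\in B\setminus S$ we have $c_n=0$, so the identity gives $\{u_{n-1}x\}=\{u_n x\}/b_n<1/b_n\leq 1/2$, which together with $\|u_{n-1}x\|\to 0$ forces $\{u_{n-1}x\}=\|u_{n-1}x\|\to 0$ along $B\setminus S$, as required.

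The argument is short and the only subtlety is tracking the $\{u_{n-1}x\}\to 0$ versus $\to 1$ dichotomy coming from $\T$-convergence; the identity \eqref{eqn:ug6} together with the size of $c_n$ (namely $c_n\geq 1$ in (a) and $c_n=0$ in (b)) picks out the correct branch, so I do not anticipate a serious obstacle beyond bookkeeping.
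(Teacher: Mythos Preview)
Your proposal is correct and follows essentially the same approach as the paper's own proof: both arguments hinge on the identity \eqref{eqn:ug6}, use the $b$-boundedness of $B$ to obtain a uniform lower bound $\{u_{n-1}x\}\geq 1/M$ on $B\cap S$ (forcing the limit $1$), and then bound $\{u_{n-1}x\}$ above by $1-1/M$ when $c_n\leq b_n-2$ to conclude $B\cap S\subseteq^* S_b$; part (b) is likewise identical. The only cosmetic difference is that the paper derives $c_n>b_n-2$ directly from the chain $1-1/b<\{u_{n-1}x\}<(c_n+1)/b_n$, whereas you argue by contraposition, and your derivation of $B\subseteq^{\I}S_b$ via the decomposition $B\setminus S_b=(B\setminus S)\cup((B\cap S)\setminus S_b)$ is slightly more explicit than the paper's one-line appeal to freeness of $\I$.
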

\begin{proof} 
(a) Since $B\cap S\subseteq B$, our hypothesis $B\setminus (B\cap S)=B\setminus S \in \I$ implies the inclusion $B\cap S \subseteq_{\I} B$ in \eqref{dag}. 
Let $b=1+\max_{n \in B} \{b_n\}$. By~\eqref{eqn:ug6}, for every  positive $n\in B\cap S$,
\begin{equation}\label{184}
\{u_{n-1}x \}=\frac{c_n}{b_n}+\frac{\{u_nx\}}{b_n}\geq \frac{c_n}{b_n}>\frac{1}{b}
\end{equation}
as each $c_n \geq 1$. Since $\bar x \in t_{\uu_{B-1}}(\T)$,~\eqref{184} implies that
$\lim\limits_{n \in B\cap S}\{u_{n-1}x \}=1.$
Then, using again~\eqref{eqn:ug6}, we conclude that for almost every $n \in B\cap S$ with $n\neq 0$,
$$1-\frac{1}{b_n}<1-\frac{1}{b}<\{u_{n-1}x \}=\frac{c_n}{b_n}+\frac{\{u_nx\}}{b_n}<\frac{c_n}{b_n}+\frac{1}{b_n}.$$
Therefore, for almost every $n \in B\cap S$, 
$c_n>b_n-2$, and thus $c_n=b_n-1$. 
Hence, $B\cap S \subseteq^* S_b$. This implies that $B \subseteq^{\I}S_b$ as $\I$ is free.

\smallskip
(b) As $B\setminus S\subseteq B$ and $B \setminus (B\setminus S)=B\cap S\in \I$ by hypothesis, we get that $B\setminus S\subseteq_{\I} B$. 
Furthermore, by using~\eqref{eqn:ug6}, for every  positive $n\in B\setminus S$,  
$$
\{u_{n-1}x \}=\frac{c_n}{b_n}+\frac{\{u_nx\}}{b_n}=\frac{\{u_nx\}}{b_n}<\frac{1}{b_n}\leq\frac{1}{2},
$$
since $c_n=0$ and $b_n \geq 2$. As $\bar x \in t_{\uu_{B-1}}(\T)$, this means that $\lim\limits_{n \in B\setminus S}\{u_{n-1}x \}=0$.
\end{proof}

We are now in position to prove the necessity in Theorem~\ref{conjecture}.

\begin{proposition}[Necessity  in~\ref{conjecture}]\label{necessity} Let $\uu\in\mathcal A$, let $\I$ be a translation invariant free $P$-ideal of $\N$ and let $x \in[0,1)$ with $S=\supp(x)$ and $S_b=\supp_b(x)$. If $\bar x\in t^{\I}_\uu(\T)$, then $\ax\&\bx$ holds for every $A\in\P(\N)\setminus\I$.
\end{proposition}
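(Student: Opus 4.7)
The plan is to reduce an arbitrary $A \in \P(\N) \setminus \I$ to a subset along which \emph{both} $u_n x$ and $u_{n-1} x$ tend to $0$ in $\T$, then extract $\ax$ and $\bx$ from Lemma~\ref{Lemma2.7} together with the recursion $\{u_{n-1} x\} = c_n/b_n + \{u_n x\}/b_n$ of Lemma~\ref{ug10}(a). Since $\T$ is first countable and $\I$ is a free $P$-ideal, Lemma~\ref{I^*1} upgrades $u_n x \overset{\I}{\to} 0$ to $\lim_{n \in M} u_n x = 0$ for some $M \in \I^*$. Translation invariance combined with Lemma~\ref{ti1} yields $M+1 \in \I^*$, hence $\tilde M := M \cap (M+1) \in \I^*$; every $n \in \tilde M$ satisfies $n, n-1 \in M$, so both $u_n x \to 0$ and $u_{n-1} x \to 0$ along $\tilde M$. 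Consequently, for $A \not\in \I$, the set $\hat A := A \cap \tilde M$ satisfies $\hat A \subseteq_\I A$, lies outside $\I$, and every $B \subseteq \hat A$ has $\bar x \in t_{\uu_{B-1}}(\T)$, making Lemma~\ref{Lemma2.7} applicable to such $B$.

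\smallskip
\emph{Case $\bx$.} If $A$ is $b$-divergent, take $B := \hat A$: $b_n \to \infty$ along $B$ forces $\{u_n x\}/b_n \to 0$, and \eqref{eqn:ug6} gives $\varphi(c_n/b_n) = \varphi(u_{n-1} x) - \varphi(\{u_n x\}/b_n) \to 0$ in $\T$ along $B$.

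\smallskip
\emph{Case $\ax$.} Assume $b_n \leq K$ on $A$. For $\axuno$ (when $A \subseteq^\I S$), put $A_0 := \hat A \cap S \subseteq_\I A$ and apply Lemma~\ref{Lemma2.7}(a) to $A_0$: one immediately obtains $A_0 \subseteq^* S_b$, yielding $A \subseteq^\I S_b$, together with $\{u_{n-1} x\} \to 1$ along $A_0$. Restricting to the cofinite $A' := A_0 \cap S_b$, the identity $\{u_{n-1} x\} = 1 - 1/b_n + \{u_n x\}/b_n$ and $b_n \leq K$ force $\{u_n x\} \to 1$ along $A'$; plugging this into the shifted recursion $\{u_n x\} = c_{n+1}/b_{n+1} + \{u_{n+1} x\}/b_{n+1}$ delivers both $(c_{n+1}+1)/b_{n+1} \to 1$ and, for $n$ sufficiently large in $A'$, $c_{n+1} \geq 1$ (else $\{u_n x\} < 1/b_{n+1} \leq 1/2$); after discarding a finite set, $A' + 1 \subseteq S$, and translation invariance lifts $A' \subseteq_\I A$ to $A+1 \subseteq^\I S$. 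For $\axdue$ (when $A \cap S \in \I$), take $B_0 := \hat A \setminus S \subseteq_\I A$: Lemma~\ref{Lemma2.7}(b) gives $\{u_{n-1} x\} \to 0$ along $B_0$, which (since $c_n = 0$ on $B_0$) rewrites as $\{u_n x\}/b_n \to 0$; the bound $b_n \leq K$ rules out $\{u_n x\} \to 1$, so on a cofinite $B' \subseteq B_0$ one has $\{u_n x\} \to 0$, whence $c_{n+1}/b_{n+1} \leq \{u_n x\} \to 0$.

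\smallskip
The crux is the construction of $\tilde M = M \cap (M+1) \in \I^*$, which relies on \emph{both} the $P$-ideal property (for $M$) and translation invariance (for $M+1$); without the resulting simultaneous convergence of $u_n x$ and $u_{n-1} x$ along $\hat A$, Lemma~\ref{Lemma2.7} could not be invoked. The remainder is a routine unpacking of \eqref{eqn:ug6}, repeatedly exploiting that $u_n x \to 0$ in $\T$ forces $\{u_n x\}$ to approach $0$ or $1$.
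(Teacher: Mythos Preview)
Your proof is correct and follows essentially the same route as the paper's: obtain $M\in\I^*$ via Lemma~\ref{I^*1}, pass to $A\cap(M+1)$ to secure $\bar x\in t_{\uu_{B-1}}(\T)$, invoke Lemma~\ref{Lemma2.7}, and unwind the recursion~\eqref{eqn:ug6}. The only visible difference is that you intersect further with $M$ to obtain $\tilde M=M\cap(M+1)$, but this extra structure is never actually needed: in $\axuno$ and $\axdue$ you (like the paper) derive the behaviour of $\{u_nx\}$ directly from that of $\{u_{n-1}x\}$ together with $b$-boundedness, so the simultaneous convergence along $\tilde M$ is harmless but superfluous.
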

\begin{proof} Since $\I$ is a $P$-ideal, there exists $N \in \I^*$ such that $\lim\limits_{n \in N}\varphi(u_{n}x)=0$ by Lemma~\ref{I^*1}. Then $M=N+1 \in\I^*$, as $\I$ is translation invariant, and hence, 
\begin{equation} \label{eqn:lim0}
\lim\limits_{n \in M}\varphi(u_{n-1}x)=\lim\limits_{n \in N}\varphi(u_{n}x)=0.
\end{equation}
Let $A\in\P(\N)\setminus\I$ and $B=M \cap A$. Then $B \subseteq A$ and $A \setminus B=A \setminus M\subseteq \N\setminus M \in \I$, so also $A \setminus B\in\I$ and hence $B \subseteq_{\I} A$,  in particular $B$ is infinite. Since $B \subseteq M$, from~\eqref{eqn:lim0} we also obtain that $\lim\limits_{n \in B}\varphi(u_{n-1}x)=0$. This 
proves that
\begin{equation}\label{*eq}
\text{for every $A\in\P(\N)\setminus\I$ there exists an infinite $B \subseteq_{\I} A$ such that $\bar x \in t_{\uu_{B-1}}(\T)$.}
\end{equation}
Let $A\in\P(\N)\setminus\I$ and let $B$ be as in \eqref{*eq}.  Hence, $B \not \in \I$. 

\smallskip
$\ax$ Assume that $A\in \B_\uu$. Then also $B\in \B_\uu$, as $B\subseteq A$. 

\smallskip
$\axuno$ Suppose that $A \subseteq^{\I} S$.  Then $B \subseteq^\I S$, by Lemma~\ref{New:Lemma}(d). According to Lemma~\ref{Lemma2.7}(a), $B \subseteq^{\I} S_b$, hence $S_b \not \in \I$. In view of~\eqref{dag}, $A'':=B\cap S$ satisfies 
\begin{equation}\label{above}
A'' \subseteq_{\I} B, \quad \lim\limits_{n \in A''}\{u_{n-1}x  \}=1 \quad \text{and} \quad A'' \subseteq^* S_b.
\end{equation}
So, $A' := A'' \cap S_b$ is a cofinite subset of $S_b$. Hence, $A' \subseteq_{\I} B$ still holds in view of the first inclusion in \eqref{above}. As $B \subseteq_{\I} A$, we deduce that $A' \subseteq_{\I} A$ by Lemma~\ref{New:Lemma}(d). 
Therefore, $A' \subseteq  S_b$ implies $A \subseteq^{\I} S_b$ by Lemma~\ref{New:Lemma}(d).

Our next aim is to prove that 
\begin{equation} \label{eqn:lim2}
\lim\limits_{n \in A'} \frac{c_{n+1}+1}{b_{n+1}}=1.
\end{equation}
Using~\eqref{above}  (in the first equality below), \eqref{eqn:ug6} (in the second one) and $A' \subseteq S_b$ (in the third one), 
we get  
$$1=\lim\limits_{n \in A'}\{u_{n-1}x  \}=\lim\limits_{n \in A'}\left(\frac{c_n}{b_n}+\frac{\{u_nx \}}{b_n} \right)=\lim\limits_{n \in A'}\left(\frac{b_n-1+\{u_nx \}}{b_n} \right)=\lim\limits_{n \in A'}\left(1-\frac{1-\{u_nx \}}{b_n} \right).$$
This implies that  
$\lim\limits_{n \in A'}\frac{1-\{u_nx \}}{b_n}=0.$
Since $A' \subseteq B$ is $b$-bounded, this gives $\lim\limits_{n \in A'}(1-\{u_nx \})=0$, that is,
\begin{equation} \label{eqn:lim1}
\lim\limits_{n \in A'}\{u_nx \}=1.
\end{equation}
As $c_{n+1} \leq b_{n+1}-1$ for every $n\in\N$, from~\eqref{eqn:ug6},  we get  
$\{u_nx\}=\frac{c_{n+1}}{b_{n+1}}+\frac{\{u_{n+1}x\}}{b_{n+1}}<\frac{c_{n+1}+1}{b_{n+1}} \leq 1.$
Hence, 
\begin{equation}\label{194}
\lim\limits_{n \in A'}\{u_nx \} \leq \lim\limits_{n \in A'} \frac{c_{n+1}+1}{b_{n+1}} \leq 1.
\end{equation}
Now~\eqref{eqn:lim1} and~\eqref{194} imply \eqref{eqn:lim2}. 

In order to prove the remaining inclusion $A+1 \subseteq^{\I} S$, we start noting that, since $b_{n+1}\geq 2$ for every $n\in\N$, \eqref{eqn:lim2} gives $c_{n+1}+1>1$, i.e., $c_{n+1} \neq 0$, for almost every $n \in A'$.  So, $A'+1 \subseteq^* S$, hence $A'+1 \subseteq^\I S$. As noticed above, $A'\subseteq_\I A$, and so $A'+1\subseteq_\I A+1$ by Lemma~\ref{New:Lemma}(c); in particular, $A+1\subseteq^\I A'+1$. Then $A+1\subseteq^\I S$ by Lemma~\ref{New:Lemma}(d).


\smallskip 
$\axdue$ Now suppose that $A\cap S \in \I$. Since $B \subseteq A$, we have $B\cap S \in \I$, too. By Lemma~\ref{Lemma2.7}(b), $B':=B\setminus S\subseteq_{\I} B$ and satisfies $\lim\limits_{n \in B'}\{u_{n-1}x \}=0$, by \eqref{*eq}. 
By Lemma~\ref{New:Lemma}(d), $B' \subseteq_{\I} B$ and $B \subseteq_{\I} A$ imply that $B'\subseteq_{\I} A$.
As $c_n=0$, for every $n\in B'$, \eqref{eqn:ug8} (with $k=0$) gives 
$\{u_{n-1}x \}=
\frac{c_{n+1}}{b_n b_{n+1}}+\frac{\{u_{n+1}x \}}{b_n b_{n+1}}$.
Therefore,
$$\lim\limits_{n \in B'}\left(\frac{c_{n+1}}{b_n b_{n+1}}+\frac{\{u_{n+1}x \}}{b_n b_{n+1}}\right)=\lim\limits_{n \in B'}\{u_{n-1}x \}=0.$$
Since, for every $n\in\N$,  $c_n\geq0$, $\{u_{n}x\} \geq 0$ and $b_n>0$, we obtain $\lim\limits_{n \in B'} \frac{c_{n+1}}{b_n b_{n+1}}=0$.
As $B' \subseteq B$ is $b$-bounded, this 
gives 
$\lim\limits_{n \in B'} \frac{c_{n+1}}{b_{n+1}}=0$. 

\smallskip 
$\bx$ Assume that $A\in\D_\uu$, so $B\in\D_\uu$, too. Then $\bar x\in t_{\uu_{B-1}}(\T)$ and~\eqref{eqn:ug6} imply 
$$\lim\limits_{n \in B}\varphi\left(\frac{c_n}{b_n}+\frac{\{u_nx \}}{b_n}  \right)=\lim\limits_{n \in B}\varphi(\{u_{n-1}x\})=\lim\limits_{n \in B}\varphi(u_{n-1}x)=0.$$
Since $\{u_nx \}<1$ and $\lim\limits_{n \in B} b_n=\infty,$ it follows that $\lim\limits_{n \in B}\varphi\left(\frac{c_n}{b_n}\right)=0$, ending up the proof of the necessity.
\end{proof}

\begin{remark}\label{moreover} In $\axuno$ it is possible to achieve also the inclusion $A'+1\subseteq S$. To this end it suffices to replace $A'$ by $A''= A' \cap (S-1) \subseteq_\I A$. Indeed,  the inclusion $A''\subseteq A' \subseteq S_b$ and $A''+1\subseteq S$ are obvious. Furthermore,   
$A+1\subseteq^\I S$ implies that $A\subseteq^\I S - 1$, so $A\cap (S-1) \subseteq_\I A$. Hence $A'' \subseteq_\I A$, in view of $A' \subseteq_{\I} A$. Clearly,   $\lim\limits_{n \in A''} \frac{c_{n+1}+1}{b_{n+1}}=1$. From the proof of $\axdue$, one has that $B'\cap S=\emptyset$.
\end{remark}

\subsection{Sufficiency}\label{Suffsec}

Let $\uu\in\A$, let $\I$ be a translation invariant free $P$-ideal of $\N$ and $x\in[0,1)$.
In order to prove the sufficiency in the main theorem, i.e., that $\bar x\in t_\uu^\I(\T)$ under $\ax$ and $\bx$, we use the following condition for 
$A\in\P(\N)$:
\begin{itemize}
\item[$(\mathrm{U}_A)$] there exists an infinite $B\subseteq A$ such that $\bar x\in t_{\uu_{B-1}}(\T)$. 
\end{itemize}
Obviously, $(\mathrm{U}_B) \Rightarrow (\mathrm{U}_A)$ for every subset $B$ of $A$. 

The relevance of $(\mathrm{U}_A)$ is determined by 
the following equivalence, which holds in view  of Lemma~\ref{Lemma2.6}.

\begin{lemma}\label{UAholds}
Let $\uu\in\A$, let $\I$ be a translation invariant free $P$-ideal of $\N$ and let $x\in[0,1)$. Then  $\bar x\in t_\uu^\I(\T)$ if and only if $(\mathrm{U}_A)$ holds for all $A\in\P(\N)\setminus\I$. 
\end{lemma}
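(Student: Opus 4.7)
The plan is to deduce the lemma directly from Lemma~\ref{Lemma2.6}, applied to the sequence $x_n := u_n \bar x$ in the first-countable space $\T$. The only subtlety is that Lemma~\ref{Lemma2.6} characterises $\I$-convergence in terms of subsets $A'\subseteq A$ along which ordinary convergence of $(u_n \bar x)$ holds, whereas $(\mathrm{U}_A)$ requires convergence along a translate $B-1$ of a subset $B\subseteq A$. The translation invariance of $\I$ is exactly what bridges this one-step shift.

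As a preliminary observation, for any $A\subseteq\N$ the three conditions $A\notin\I$, $A+1\notin\I$ and $(A-1)\cap\N\notin\I$ are pairwise equivalent. Indeed, Lemma~\ref{ti1} gives the implications $A\in\I\Rightarrow A+1\in\I$ and $A\in\I\Rightarrow (A-1)\cap\N\in\I$. For the converses, $A+1\in\I$ forces $A=((A+1)-1)\cap\N\in\I$ by Lemma~\ref{ti1}, while $(A-1)\cap\N\in\I$ forces $((A-1)\cap\N)+1 = A\setminus\{0\}\in\I$, hence $A\in\I$ since $\{0\}\in\F in\subseteq\I$.

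For the necessity, assume $\bar x\in t_\uu^\I(\T)$ and fix $A\in\P(\N)\setminus\I$. By the preliminary observation $(A-1)\cap\N\notin\I$, so Lemma~\ref{Lemma2.6} applied to $(u_n \bar x)$ and to the set $(A-1)\cap\N$ produces an infinite $A'\subseteq (A-1)\cap\N$ with $\lim\limits_{n\in A'}u_n\bar x = 0$. Setting $B:=A'+1$, we have $B\subseteq A$ infinite and $B-1=A'$, hence $\bar x\in t_{\uu_{B-1}}(\T)$, which is $(\mathrm{U}_A)$.

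For the sufficiency, assume $(\mathrm{U}_A)$ holds for every $A\in\P(\N)\setminus\I$. To conclude $\bar x\in t_\uu^\I(\T)$ via Lemma~\ref{Lemma2.6}, fix $C\in\P(\N)\setminus\I$; we must exhibit an infinite $A'\subseteq C$ along which $u_n\bar x\to 0$. Since $C+1\notin\I$ by the preliminary observation, $(\mathrm{U}_{C+1})$ yields an infinite $B\subseteq C+1$ with $\bar x\in t_{\uu_{B-1}}(\T)$. As $C\subseteq\N$ implies $C+1\subseteq\N_+$, we have $0\notin B$, so $A':=B-1\subseteq C$ is infinite and $\lim\limits_{n\in A'}u_n\bar x = 0$. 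No significant obstacle is expected: the whole argument is a one-step translation of Lemma~\ref{Lemma2.6}, and the only genuine care needed is the equivalence $A\notin\I\iff A\pm 1\notin\I$, which uses both translation invariance and freeness of $\I$.
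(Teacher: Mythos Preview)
Your proof is correct and is essentially the paper's own argument: the paper simply asserts that the equivalence ``holds in view of Lemma~\ref{Lemma2.6}'', and you have spelled out precisely the one-step shift via translation invariance (and freeness) that makes this reference go through. The care you take with $A\notin\I\iff A\pm 1\notin\I$ is exactly the point the paper leaves implicit in invoking the translation-invariance hypothesis.
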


\begin{remark}\label{case3}
Let $\I$ be a free ideal of $\N$ and let $A\in\P(\N)\setminus\I$. Let $S\subseteq\N$ (below always $S=\supp(x)$ with $\uu\in\A$ and $x\in[0,1)$).
Since $A \not \in \I$, it is not possible to have both $A\cap S\in \I$ and $A\setminus S\in \I$. 
From the remaining three cases: (1) $A\cap S \in \I$, $A\setminus S\not \in \I$; (2) $A\setminus S \in \I$, $A\cap S \not \in \I$ and (3) $A\cap S \not \in \I$ and $A\setminus S \not \in \I$, we consider below in the proof of the sufficiency only (1) and (2), since the case (3) can be easily reduced to both (1) or/and (2) after replacing $A$ by $A\setminus S \not \in \I$ or $A\cap S \not \in \I$.
In particular, one can always assume that either $A\subseteq S$ or $A\cap S=\emptyset$.
\end{remark}

\subsubsection{Chasing for $(\mathrm{U}_A)$ when $A\in\D_\uu\setminus\I$ or $A\in\B_\uu\setminus\I$ and $A \subseteq \supp(x)$}

In our first lemma we verify that for $x \in[0,1)$ one has $\bx \Rightarrow (\mathrm{U}_A)$ for all $A\in\P(\N)\setminus\I$ that are $b$-divergent.  Actually, for such $A$ the following stronger property holds:

\begin{lemma}\label{L2.3}
Let $\uu\in\mathcal A$, let $\I$ be a translation invariant free $P$-ideal of $\N$ and $x \in[0,1)$ satisfying $\bx$. If $A\in\D_\uu \setminus\I$,
then there exists $B\subseteq_\I A$ such that $\lim\limits_{n \in B}\varphi(u_{n-1}x)=0$; in particular $(\mathrm{U}_A)$ holds. 
\end{lemma}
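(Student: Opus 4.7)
The plan is to derive the stronger statement directly from $\bx$ using the identity \eqref{eqn:ug6}, and then note that the ``in particular'' clause is immediate from the definition of $(\mathrm{U}_A)$.

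First, I would apply $\bx$ to the given set $A \in \D_\uu \setminus \I$ to obtain a subset $B \subseteq_\I A$ with $\lim_{n\in B}\varphi(c_n/b_n) = 0$. Since $A \notin \I$ and $A\setminus B \in \I$, necessarily $B \notin \I$, so in particular $B$ is infinite. Moreover, being an infinite subset of the $b$-divergent set $A$, the set $B$ is itself $b$-divergent, i.e., $b_n \to \infty$ as $n \in B$.

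Next, I would invoke \eqref{eqn:ug6}, which gives $\{u_{n-1}x\} = \frac{c_n}{b_n} + \frac{\{u_n x\}}{b_n}$ for every $n \in \N_+$. Projecting to $\T$, one has $\varphi(u_{n-1}x) = \varphi\!\left(\frac{c_n}{b_n}\right) + \varphi\!\left(\frac{\{u_n x\}}{b_n}\right)$. The first summand tends to $0$ in $\T$ along $B$ by the choice of $B$, while $0 \leq \frac{\{u_n x\}}{b_n} < \frac{1}{b_n} \to 0$ along $B$ since $B$ is $b$-divergent, so the second summand also tends to $0$ in $\T$. Hence $\lim_{n\in B}\varphi(u_{n-1}x) = 0$.

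Finally, this is exactly the assertion that $\bar x \in t_{\uu_{B-1}}(\T)$ (writing the limit along the shifted index set), and $B \subseteq A$ is infinite, so $(\mathrm{U}_A)$ holds. There is no real obstacle here: the argument is a one-line consequence of \eqref{eqn:ug6} combined with $b_n \to \infty$ on $B$; the only small caveat is verifying that passing from $A$ to the subset $B$ produced by $\bx$ preserves both $b$-divergence and infiniteness, which follows from $B \subseteq_\I A$ and $A \notin \I$.
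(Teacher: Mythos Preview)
Your proof is correct and essentially identical to the paper's: apply $\bx$ to obtain $B\subseteq_\I A$ with $\varphi(c_n/b_n)\to 0$ along $B$, then use \eqref{eqn:ug6} together with $\{u_nx\}/b_n<1/b_n\to 0$ (since $B\subseteq A$ is $b$-divergent) to conclude $\varphi(u_{n-1}x)\to 0$ along $B$. You even spell out a couple of points (that $B\notin\I$, hence infinite, and that $B$ inherits $b$-divergence from $A$) that the paper leaves implicit.
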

\begin{proof}
By $\bx$,  $\lim\limits_{n \in B'}\varphi\left(\frac{c_n}{b_n}\right)=0$ for some $B' \subseteq_{\I} A$. So, as $\lim\limits_{n \in B'}\frac{\{u_{n}x\}}{b_n}\leq\lim\limits_{n \in B'}\frac{1}{b_n}=0$,~\eqref{eqn:ug6} gives that
\[\lim\limits_{n \in B'}\varphi(u_{n-1}x)=\lim\limits_{n \in B'}\varphi(\{u_{n-1}x\})=\lim\limits_{n \in B'}\varphi \left(\frac{c_n}{b_n}+\frac{\{u_{n}x\}}{b_n} \right)=0.\qedhere\]
\end{proof}

Following \cite{Ghosh}, for $k \in \N$ and a subset $E$ of $\N$ we put 
\begin{equation}\label{Lk}
S_k(E):= \bigcup_{j = 0}^k (E + j).
\end{equation}

\begin{claim}\label{DD}
Let $\uu\in\A$ and $x\in[0,1)$ with $S=\supp(x)$ and $S_b=\supp_b(x)$. If an infinite $E\subseteq \N$  satisfies  $\neg (\mathrm{U}_E)$ and $S_k(E) \subseteq S_b$ for some $k\in \N$ then $F+1\in \B_\uu$ for every infinite $F\subseteq E+ k $ satisfying $\lim\limits_{m \in F} \frac{c_{m+1}+1}{b_{m+1}}=1$.
\end{claim}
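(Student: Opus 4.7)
The plan is to argue by contradiction. Assume $F+1 \notin \B_\uu$; since $F$ is infinite, this forces $(b_{m+1})_{m \in F}$ to be unbounded. I would extract an infinite $F' \subseteq F$ with $\lim_{m \in F'} b_{m+1} = +\infty$ and set $E' := F' - k$. Because $F' \subseteq F \subseteq E+k$, the set $E'$ is an infinite subset of $E$, and the goal is reduced to proving $\bar x \in t_{\uu_{E'-1}}(\T)$, which directly contradicts $\neg(\mathrm{U}_E)$.

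Two asymptotic inputs are available along $E'$: $b_{n+k+1} \to +\infty$ (by the choice of $F'$) and $\varepsilon_n := 1 - \frac{c_{n+k+1}+1}{b_{n+k+1}} \to 0$ (from the given limit on $F$ and $F' \subseteq F$). To exploit them I will compute $\{u_{n-1}x\}$ via~\eqref{eqn:ug8}. Here the hypothesis $S_k(E) \subseteq S_b$ does the decisive work: it forces $n, n+1, \ldots, n+k \in S_b$, so Lemma~\ref{ug10}(b) collapses $\sigma_{n,k}(x)$ to $1 - 1/(b_n\cdots b_{n+k})$. Substituting this into~\eqref{eqn:ug8} and rewriting $b_{n+k+1} - c_{n+k+1} = 1 + b_{n+k+1}\varepsilon_n$ yields the key identity
\begin{equation*}
1 - \{u_{n-1}x\} \;=\; \frac{\varepsilon_n}{b_n\cdots b_{n+k}} \;+\; \frac{1 - \{u_{n+k+1}x\}}{b_n\cdots b_{n+k+1}},
\end{equation*}
in which the two asymptotic inputs decouple into independent summands.

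Bounding is now routine: the first summand is at most $\varepsilon_n/2^{k+1}$ (using $b_j \geq 2$), the second at most $1/(2^{k+1}\,b_{n+k+1})$. Both tend to $0$ along $E'$, so $\{u_{n-1}x\} \to 1$ and hence $\|u_{n-1}x\| \to 0$. This is exactly $\bar x \in t_{\uu_{E'-1}}(\T)$, producing the desired contradiction with $\neg(\mathrm{U}_E)$.

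The main obstacle I expect is purely algebraic: spotting the decomposition above so that $\varepsilon_n$ and $1/b_{n+k+1}$ live in separate summands. A less careful rearrangement of~\eqref{eqn:ug8} leaves the two quantities entangled and wrecks the estimate. Once the role of $S_k(E) \subseteq S_b$ is identified (it is what produces the clean closed form for $\sigma_{n,k}$) and the trivial identity $b_{n+k+1} - c_{n+k+1} = 1 + b_{n+k+1}\varepsilon_n$ is inserted, everything else reduces to a one-line triangle-inequality bound.
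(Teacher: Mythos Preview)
Your proof is correct and follows essentially the same strategy as the paper: argue by contradiction, extract a subsequence $F'\subseteq F$ with $b_{m+1}\to\infty$, pull back to $E':=F'-k\subseteq E$, and use $S_k(E)\subseteq S_b$ to force $\{u_{n-1}x\}\to 1$ along $E'$, contradicting $\neg(\mathrm U_E)$. The only cosmetic difference is in the final estimate: the paper obtains the one-line lower bound $\{u_{n-1}x\}\geq c_{n+k+1}/b_{n+k+1}$ by a direct telescoping computation (dropping the tail), whereas you invoke~\eqref{eqn:ug8} and Lemma~\ref{ug10}(b) to get an exact two-term expression for $1-\{u_{n-1}x\}$; both routes are equally valid.
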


\begin{proof} Let $F$ be an  infinite subset of $E+ k $ satisfying $\lim\limits_{m \in F} \frac{c_{m+1}+1}{b_{m+1}}=1$.
Assume for a contradiction that $F+1$ is not $b$-bounded. Then there is a strictly increasing sequence $(m_i)$ in $F$ with $b_{m_i+1}\rightarrow +\infty$; thus, $\lim\limits_{i\to\infty}\frac{c_{m_i+1}}{b_{m_i+1}}=1$, since $\lim\limits_{m\in F} \frac{c_{m+1}+1}{b_{m+1}}=1$ by hypothesis. 
Since $F - k\subseteq E$, $n_i:= m_i - k \in E$ for every $i\in\N$. Next we prove that $\lim\limits_{i\to \infty}\Vert u_{n_i-1}x\Vert=0$ which ensures $(\mathrm{U}_E)$, a contradiction.

 Fix an arbitrary $i\in \N$ and for the sake of simplicity write $n:=n_i$. Then $n+j\in S_k(E)\subseteq S_b$ for $j\in\{0,\ldots,k\}$. Hence,  one has: 
\begin{align*}
u_{n-1}x&\equiv_\Z u_{n-1}\left(\sum_{j=0}^k\frac{b_{n+j}-1}{u_{n+j}}+\frac{c_{n+k+1}}{u_{n+k+1}}+\sum_{i>n+k+1}\frac{c_i}{u_i}\right)\geq u_{n-1}\left(\frac{1}{u_{n-1}}-\frac{1}{u_{n+k}}\right)+u_{n-1} \frac{c_{n+k+1}}{u_{n+k+1}}\\
&=1-\frac{u_{n-1}}{u_{n+k}}\left(1-\frac{c_{n+k+1}}{b_{n+k+1}}\right) \geq 1-\left(1-\frac{c_{n+k+1}}{b_{n+k+1}}\right)=\frac{c_{n+k+1}}{b_{n+k+1}}.
\end{align*}
Therefore, 
 $\{u_{n_i-1}x\}\geq \frac{c_{m_i+1}}{b_{m_i+1}}\to1$, and so $\lim\limits_{i\to \infty}\Vert u_{n_i-1}x\Vert=0$. 
\end{proof}

In order to refine the union in \eqref{Lk}, we replace the set $E$ by a decreasing sequence $\alpha= (A_k)$ of subsets of $\N$, by letting  
\begin{equation}\label{Ck}
C_k(\alpha):= \bigcup_{j = 0}^k (A_j + j).
\end{equation}
The sets $C_k(\alpha)$ form an increasing chain.  If $\alpha$ is the constant sequence given by $A_k=E$ for every $k\in \N$, then obviously
$C_k(\alpha)=S_k(E)$. 

In the next proof, fixed $\uu\in\A$, $\I$ a translation invariant free ideal of $\N$ and $x\in[0,1)$ with $S=\supp(x)$ and $S_b=\supp_b(x)$, we shall need the following property of $A \in\P(\N)\setminus\I$: 
\begin{itemize}
\item[$(\mathrm{S}_A)$] there exists a strictly decreasing sequence $\alpha=(A_k)$ of subsets of $A$ such that $A_k\subseteq_\I A$, $C_k(\alpha)\in\B_\uu$ and $C_k(\alpha)\subseteq S_b$ for every $k\in\N$.
\end{itemize}
Obviously, for every $\alpha$ as in $(\mathrm{S}_A)$, $S_k(A_k) \subseteq C_k(\alpha)\subseteq S_b$ for every $k\in\N$.
In particular, $A_k + k \subseteq C_k(\alpha)$, so the $b$-boundedness of $C_k(\alpha)$ implies that 
$A_k + k$ is $b$-bounded for every $k \in \N$.

\begin{lemma}\label{S3}
Let $\uu\in\A$, let $\I$ be a translation invariant free ideal of $\N$ and let $x\in[0,1)$ with $S=\supp(x)$ and $S_b=\supp_b(x)$. Then $\axuno \Rightarrow (\mathrm{U}_A)$ for every $A\in\B_\uu\setminus\I$ with $A \subseteq \supp(x)$.
\end{lemma}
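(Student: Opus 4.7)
By Lemma~\ref{UAholds}, it suffices to verify $(\mathrm{U}_A)$, so my plan is to argue by contradiction. Assume $\neg(\mathrm{U}_A)$. The idea is to build iteratively a decreasing sequence $A_0\supseteq A_1\supseteq\cdots$ of subsets of $A$ such that $A_k\subseteq_\I A$ and, most importantly, satisfying the invariant
\[ A_k+j\subseteq S_b\ \text{ and $A_k+j$ is $b$-bounded, for every $j\in\{0,1,\ldots,k\}$}.\]
Once such a chain is in hand, each $A_k$ is infinite (as $A_k\subseteq_\I A$ and $A\notin\I$), so I pick $n_k\in A_k$ with $n_k>n_{k-1}$. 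Then $n_k,n_k+1,\ldots,n_k+k\in S_b$, so Lemma~\ref{ug10}(b) gives $\sigma_{n_k,k}(x)\geq 1-2^{-k-1}$, while \eqref{eqn:ug4} yields $\{u_{n_k-1}x\}\in[\sigma_{n_k,k}(x),1)$. Consequently, $\|u_{n_k-1}x\|\leq 2^{-k-1}\to 0$, and $B:=\{n_k:k\in\N\}\subseteq A$ then witnesses $(\mathrm{U}_A)$, contradicting the standing assumption.

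For the base of the induction I set $A_0:=A\cap S_b$: since $\axuno$ applied to $A$ (which lies in $\B_\uu\setminus\I$ with $A\subseteq S$) yields $A\subseteq^\I S_b$, Lemma~\ref{New:Lemma}(b) gives $A_0\subseteq_\I A$, and $A_0\subseteq S_b$ is obviously $b$-bounded. For the inductive step, assume $A_k$ has been constructed. I first apply $\axuno$ to $A_k+k$ -- which lies in $\B_\uu\setminus\I$ by translation invariance and is contained in $S_b\subseteq S$ -- to extract $F\subseteq_\I A_k+k$ with $F\subseteq S_b$ and $\lim_{m\in F}(c_{m+1}+1)/b_{m+1}=1$, and also the side information $A_k+k+1\subseteq^\I S$. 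Since $\neg(\mathrm{U}_A)$ forces $\neg(\mathrm{U}_{A_k})$ and $S_k(A_k)\subseteq S_b$ by the inductive invariant, Claim~\ref{DD} now delivers $F+1\in\B_\uu$. Moreover $F+1\in\B_\uu\setminus\I$ and $F+1\subseteq_\I A_k+k+1\subseteq^\I S$ by Lemma~\ref{New:Lemma}(c),(d), so a \emph{second} application of $\axuno$ -- this time to $F+1$ -- yields $F+1\subseteq^\I S_b$. I then set $G:=(F+1)\cap S_b\subseteq_\I F+1$ and $A_{k+1}:=G-(k+1)\subseteq A_k$, and verify $A_{k+1}+(k+1)=G\subseteq S_b$ is $b$-bounded, $A_{k+1}+j\subseteq A_k+j\subseteq S_b$ for $j\leq k$, $A_{k+1}\subseteq_\I A_k\subseteq_\I A$, and $A_{k+1}\notin\I$ (all of this by chaining Lemma~\ref{New:Lemma} with the translation invariance of $\I$).

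The main obstacle is executing this inductive step, and its subtlety lies precisely in the need to invoke $\axuno$ \emph{twice}. The first application to $A_k+k$ is what arranges the limit condition on $F$ required to fire Claim~\ref{DD}, and Claim~\ref{DD} in turn upgrades $F+1$ from being merely $\subseteq^\I S$ to being $b$-bounded. This alone, however, is not enough to maintain the invariant $A_{k+1}+(k+1)\subseteq S_b$, so one is forced to apply $\axuno$ a second time to $F+1$ in order to push the containment all the way down to $S_b$. Translation invariance and the $P$-ideal property of $\I$ keep the many $\subseteq_\I$ and $\subseteq^\I$ relations intact through the induction and, in particular, secure the non-memberships $A_k+k\notin\I$ and $F+1\notin\I$ that make $\axuno$ applicable at each turn.
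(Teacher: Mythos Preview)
Your proof is correct and follows essentially the same route as the paper: argue by contradiction, build a decreasing chain $A_0\supseteq A_1\supseteq\cdots$ with $A_k\subseteq_\I A$ and $A_k+j\subseteq S_b$ for $j\leq k$ via two applications of $\axuno$ per step (the first producing the set $F$ with the limit condition needed for Claim~\ref{DD}, the second pushing $F+1$ into $S_b$), and then pick a diagonal sequence $(n_k)$ to witness $(\mathrm U_A)$.

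Two small remarks. First, the opening reference to Lemma~\ref{UAholds} is out of place: the statement asks you to prove $(\mathrm U_A)$ for a single fixed $A$, not $\bar x\in t_\uu^\I(\T)$, so there is nothing to reduce. Second, your closing sentence invokes ``the $P$-ideal property of $\I$'', but the hypothesis of Lemma~\ref{S3} is only that $\I$ be a translation invariant free ideal, and indeed nowhere in your construction is a pseudounion or pseudointersection taken; only translation invariance and the elementary manipulations of Lemma~\ref{New:Lemma} are used. Drop that clause.
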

\begin{proof}  Let $A\in\B_\uu\setminus\I$ with $A \subseteq S$. Arguing by contradiction, we assume that $(\mathrm{U}_A)$ fails, and prove that  $\neg (\mathrm{U}_A)\Rightarrow (\mathrm{S}_A)\Rightarrow (\mathrm{U}_A)$, a contradiction.

\smallskip
To prove that $\neg (\mathrm{U}_A)\Rightarrow (\mathrm{S}_A)$ we build inductively a strictly decreasing sequence $\alpha=(A_k)$ as in $(\mathrm{S}_A)$. Let $A_0:=A\cap S_b\subseteq_\I A$ by $\axuno$. Now fix $k\in\N$ and assume that $A_j$ with the properties in $(\mathrm{S}_A)$ are built for all $j\in\{0,\ldots,k\}$.
The fact that $A_k\subseteq_\I A$ and $A\not\in\I$ implies that $A_k\not\in\I$, so $A_k+k\not\in\I$ as well.
Since  $B:=A_k+k\subseteq S_b$ and $B\in\B_\uu$ by our inductive hypothesis, by $\axuno$ there exists $B'\subseteq_\I B$ with $\lim\limits_{m\in B'}\frac{c_{m+1}+1}{b_{m+1}}=1$ and $B' \subseteq S_b$. Since $B- k \subseteq A_k \subseteq A$, our hypothesis $\neg (\mathrm{U}_A)$ implies $\neg (\mathrm{U}_{B- k})$. Hence, by Claim~\ref{DD} applied to $E= B-k=A_k$ and $F= B'$, we deduce that $B'+1\in\B_\uu$. Since also $B'$ is $b$-bounded and $B'\subseteq S_b$, we have by $\axuno$ that $B'+1\subseteq^\I S_b$. Replacing $B'$ by a suitable smaller set we may assume that $B'+1\subseteq S_b$. With $A_{k+1}:=B'-k$ we have $A_{k+1}+(k+1)=B'+1\subseteq S_b$. Moreover, from $A_{k+1}+k=B'\subseteq_\I B=A_k+k$ it follows that $A_{k+1}\subseteq_\I A_k$. This concludes the inductive construction of $\alpha=(A_k)$ as in $(\mathrm{S}_A)$. 

\smallskip

Now we prove that $(\mathrm{S}_A) \Rightarrow (\mathrm{U}_A)$. Let $\alpha=(A_k)$ be a sequence according to $(\mathrm{S}_A)$ and let $n_k\in A_k$ be such that $(n_k)$ is strictly increasing. We prove now that $\Vert u_{n_k-1}x\Vert\rightarrow 0$, so $(\mathrm{U}_A)$ holds. Fix an arbitrary $k\in \N$ and for the sake of simplicity write $n:=n_k$. Then for every $j\in\{0,\ldots,k\}$, one has $n+j\in A_k+j\subseteq A_j+j\subseteq C_k(\alpha)\subseteq S_b$, by \eqref{Ck}. Therefore,
$$u_{n-1}x\equiv_\Z u_{n-1}\sum_{i=n}^{n+k}\frac{b_i-1}{u_i}+u_{n-1}\sum_{i>n+k}\frac{c_i}{u_i} \geq u_{n-1}\left(\frac{1}{u_{n-1}}-\frac{1}{u_{n+k}}\right)=1-\frac{u_{n-1}}{u_{n+k}}\geq 1-2^{-k-1}.$$ This proves $\Vert u_{n_k-1}x\Vert\rightarrow 0$.
\end{proof}

\subsubsection{Chasing for $(\mathrm{U}_A)$ when $A \in \B_\uu\setminus\I$ and $A \cap \supp(x)=\emptyset$}

\begin{claim}\label{DDT}\label{DDU}
Let $\uu\in\A$ and $x\in[0,1)$ with $S=\supp(x)$, let $k\in \N$ and $L$ be an infinite set such that $S_k(L)\cap S=\emptyset$. 
\begin{itemize}
   \item[(a)] If $(\mathrm{U}_L)$ does not hold, then $A' \cap (S-1) \in\I$ for every $A'\subseteq_\I L+k$ such that $\lim\limits_{m\in A'}\frac{c_{m+1}}{b_{m+1}}=0$.
   \item[(b)] If $L+k\in\D_\uu$, then $(\mathrm{U}_L)$ holds.
\end{itemize}
\end{claim}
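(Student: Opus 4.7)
The plan is to prove both items by using formula \eqref{eqn:ug8} of Lemma \ref{ug10}(a). For any $n\in L$, set $m:=n+k$. By the hypothesis $S_k(L)\cap S=\emptyset$, we have $n,n+1,\dots,m\notin S$, so $c_n=c_{n+1}=\cdots=c_m=0$, and Lemma \ref{ug10}(c) gives $\sigma_{n,k}(x)=0$. Plugging this into \eqref{eqn:ug8} yields
$$\{u_{n-1}x\}\;=\;\frac{c_{m+1}+\{u_{m+1}x\}}{b_n b_{n+1}\cdots b_{m+1}}\;\leq\;\frac{c_{m+1}+1}{b_{m+1}}\cdot\frac{1}{b_n b_{n+1}\cdots b_m}.$$
This single estimate drives both parts of the claim.

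For (b), since $L+k\in\D_\uu$, we have $b_m\to\infty$ along $m\in L+k$, equivalently along $n\in L$. Bounding the numerator crudely by $c_{m+1}+1\leq b_{m+1}$, the displayed inequality gives $\{u_{n-1}x\}\leq 1/(b_n\cdots b_m)\leq 1/b_m\to 0$. Thus $\|u_{n-1}x\|\to 0$ along $n\in L$, so $(\mathrm{U}_L)$ holds with $B:=L$.

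For (a), I would argue by contradiction. Suppose the hypothesis holds but $A'':=A'\cap(S-1)\notin\I$. Since $A'\setminus(L+k)\in\I$, the set $A''':=A''\cap(L+k)$ satisfies $A'''\subseteq_\I A''$ by Lemma \ref{New:Lemma}, so $A'''\notin\I$ and $B:=A'''-k$ is an infinite subset of $L$. For $n\in B$, setting $m=n+k\in A'''$, the condition $m\in S-1$ gives $c_{m+1}\geq 1$; combined with $c_{m+1}/b_{m+1}\to 0$ (valid along $A'\supseteq A'''$), this forces $b_{m+1}\to\infty$ via $1/b_{m+1}\leq c_{m+1}/b_{m+1}$. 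Hence $(c_{m+1}+1)/b_{m+1}\to 0$, and since $1/(b_n\cdots b_m)\leq 1$, the displayed estimate yields $\{u_{n-1}x\}\to 0$ and thus $\|u_{n-1}x\|\to 0$ along $B\subseteq L$. This establishes $(\mathrm{U}_L)$, contradicting the hypothesis.

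The only subtle point is the implication in (a) from $c_{m+1}/b_{m+1}\to 0$ together with $c_{m+1}\geq 1$ to $b_{m+1}\to\infty$; without this, the factor $(c_{m+1}+1)/b_{m+1}$ would not tend to $0$ and the argument would stall. The remaining work is bookkeeping with $\subseteq_\I$ to pass between $A'$, $A''$, $A'''$, and $B$, which uses only that $\I$ is a (translation invariant) free ideal.
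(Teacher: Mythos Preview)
Your proof is correct and follows essentially the same route as the paper: both parts reduce to the estimate $\{u_{n-1}x\}\leq (c_{m+1}+1)/(b_n\cdots b_{m+1})$ for $n\in L$, $m=n+k$, and both prove (a) by contradiction, passing to an infinite $B\subseteq L$ with $c_{n+k+1}\geq 1$. The only minor differences are stylistic: the paper bounds $(c_{m+1}+1)/b_{m+1}\leq 2\,c_{m+1}/b_{m+1}$ directly (using $c_{m+1}\geq 1$) rather than first deducing $b_{m+1}\to\infty$; and your intermediate set $A''':=A''\cap(L+k)$ is redundant, since $A'\subseteq_\I L+k$ already means $A'\subseteq L+k$ (not merely $A'\setminus(L+k)\in\I$), so $A'''=A''$.
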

\begin{proof} 
(a) Let $A'\subseteq_\I L+k$ such that $\lim\limits_{m\in A'}\frac{c_{m+1}}{b_{m+1}}=0$, assume that $A'' := A' \cap (S-1) \not\in\I$ and let $L'':= A'' -k\subseteq L$. Then $L''\not\in\I$, as well.
Moreover, $L'' + k + 1 = A'' + 1\subseteq S$, while for every $j\in\{0,\ldots,k\}$,
$L'' + j \subseteq S_k(L)$ and $S_k(L)\cap S=\emptyset$. Hence, for every $n\in L''$, $c_{n+k+1}\neq 0$ and $c_{n+j}=0$ for every $j\in\{0,\ldots,k\}$. Therefore,
\begin{align*}
u_{n-1}x&\equiv_\Z u_{n-1}\left(\frac{c_{n+k+1}}{u_{n+k+1}}+\sum_{i>n+k+1}\frac{c_i}{u_i}\right)\leq u_{n-1}\left(\frac{c_{n+k+1}}{u_{n+k+1}}+\frac{1}{u_{n+k+1}}\right)\leq u_{n+k}\left(\frac{c_{n+k+1}}{u_{n+k+1}}+\frac{1}{u_{n+k+1}}\right)\leq 2\frac{c_{n+k+1}}{b_{n+k+1}}.
\end{align*}
Since $n+k\in A'$ for $n\in L''$, this implies that $\lim\limits_{n\in L''} \Vert u_{n-1}x\Vert=0$. As $L''\subseteq L$, we deduce that $(\mathrm{U}_L)$ holds.

(b) For $n\in L$, since $c_n=\ldots=c_{n+k}=0$ in view of $S_k(L)\cap S=\emptyset$, 
$$u_{n-1}x\equiv_\Z u_{n-1}\sum_{i>n+k}\frac{c_i}{u_i}\leq \frac{u_{n-1}}{u_{n+k}}\leq \frac{1}{b_{n+k}} .$$
Since $\lim\limits_{n\in L}\frac{1}{b_{n+k}}=0$ by hypothesis, we obtain $\lim\limits_{n\in L}\Vert u_{n-1}x\Vert=0$. Namely, $(\mathrm{U}_L)$ holds. 
\end{proof}

\begin{lemma}\label{S4} Let $\uu\in\A$, let $\I$ be a translation invariant free ideal of $\N$ and let $x\in[0,1)$ with $S=\supp(x)$. Then $\axdue \Rightarrow (\mathrm{U}_A)$  for every $A\in\B_\uu\setminus\I$ with $A \cap S=\emptyset$. 
\end{lemma}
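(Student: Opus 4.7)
The plan is to mirror the strategy of Lemma~\ref{S3}, but with the roles of ``inside $\supp(x)$'' and ``outside $\supp(x)$'' swapped: I would use $\axdue$ and Claim~\ref{DDU} in place of $\axuno$ and Claim~\ref{DD}. Arguing by contradiction, I assume $\neg(\mathrm{U}_A)$ and attempt to construct inductively a decreasing sequence $\alpha=(A_k)_{k\in\N}$ of subsets of $A$ satisfying
\begin{itemize}
    \item[$(\alpha_1)$] $A_k \subseteq_\I A$;
    \item[$(\alpha_2)$] $(A_k + j) \cap S = \emptyset$ for every $0 \le j \le k$;
    \item[$(\alpha_3)$] $A_k + k \in \B_\uu$.
\end{itemize}

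The base case $A_0:=A$ is immediate from the hypotheses on $A$. For the inductive step, put $B:=A_k+k$; by $(\alpha_2)$, $(\alpha_3)$, and translation invariance of $\I$ (Lemma~\ref{ti1}, combined with $A_k\not\in\I$), we have $B\in\B_\uu\setminus\I$ and $B\cap S=\emptyset\in\I$. Applying $\axdue$ to $B$ yields $B'\subseteq_\I B$ with $\lim_{m\in B'}\frac{c_{m+1}}{b_{m+1}}=0$. Since $\neg(\mathrm{U}_A)$ implies $\neg(\mathrm{U}_{A_k})$, and $S_k(A_k)\cap S=\emptyset$ follows from $(\alpha_2)$ together with $A_k\subseteq A_j$ for $j\le k$, Claim~\ref{DDU}(a) applies with $L=A_k$ to give $B'\cap(S-1)\in\I$. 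Putting $B'':=B'\setminus(S-1)$ and $A_{k+1}:=B''-k$, one checks $B''\subseteq_\I B$ and, via translation invariance of $\I$, $A_{k+1}\subseteq_\I A_k$; the fact that $m+1\notin S$ for $m\in B''$ gives $(A_{k+1}+(k+1))\cap S=\emptyset$, which together with $A_{k+1}\subseteq A_k$ secures $(\alpha_1)$ and $(\alpha_2)$ at level $k+1$.

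The hard part will be to secure $(\alpha_3)$ at level $k+1$, i.e.\ to ensure $A_{k+1}+(k+1)\in\B_\uu$. The construction only delivers $B''\subseteq B$ $b$-bounded, which gives no control on $b_{n+1}$ for $n\in B''$, so $A_{k+1}+(k+1)=B''+1$ may genuinely fail to be $b$-bounded (in contrast to the situation of Claim~\ref{DD} used in Lemma~\ref{S3}). I resolve this by a dichotomy: if $A_{k+1}+(k+1)\in\B_\uu$, the induction simply continues; otherwise $(b_n)_{n\in A_{k+1}+(k+1)}$ is unbounded, and extracting a strictly increasing sequence in $A_{k+1}+(k+1)$ along which $b_n\to\infty$, then translating back by $-(k+1)$, produces an infinite $L\subseteq A_{k+1}$ with $L+(k+1)\in\D_\uu$. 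Since $A_{k+1}\subseteq A_j$ for every $j\le k$, condition $(\alpha_2)$ propagates to $S_{k+1}(L)\cap S=\emptyset$, so Claim~\ref{DDU}(b) (with $k$ replaced by $k+1$) yields $(\mathrm{U}_L)$, whence $(\mathrm{U}_A)$ via $L\subseteq A$, contradicting $\neg(\mathrm{U}_A)$.

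If the termination branch of the dichotomy never occurs we obtain $\alpha=(A_k)$ satisfying $(\alpha_1)$--$(\alpha_3)$ for all $k\in\N$. Selecting $n_k\in A_k$ so that $(n_k)$ is strictly increasing (possible since each $A_k\not\in\I$ is infinite), $(\alpha_2)$ forces $c_{n_k+j}=0$ for $0\le j\le k$, hence
$$ u_{n_k-1}x \equiv_\Z u_{n_k-1}\!\!\sum_{i>n_k+k}\frac{c_i}{u_i} \le \frac{u_{n_k-1}}{u_{n_k+k}} \le \frac{1}{2^{k+1}}, $$
whence $\Vert u_{n_k-1}x\Vert\to 0$. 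This verifies $(\mathrm{U}_A)$ with the infinite set $\{n_k:k\in\N\}\subseteq A$, contradicting $\neg(\mathrm{U}_A)$ once more and closing the argument.
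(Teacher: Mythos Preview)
Your proof is correct and follows the same overall architecture as the paper's: assume $\neg(\mathrm{U}_A)$, build inductively a decreasing chain $(A_k)$ of subsets of $A$ (each not in $\I$, with $S_k(A_k)\cap S=\emptyset$ and $A_k+k\in\B_\uu$), and either terminate at some stage with a contradiction via Claim~\ref{DDU}(b), or extract a diagonal sequence $(n_k)$ witnessing $(\mathrm{U}_A)$. The one substantive difference is how you secure $(\alpha_3)$ at level $k+1$. After constructing the candidate set (called $A_{k+1}'$ in the paper, identical to your $A_{k+1}=B''-k$), the paper searches for \emph{any} subset $C\subseteq A_{k+1}'$ with $C\not\in\I$ and $C+(k+1)\in\B_\uu$; when none exists it invokes Lemma~\ref{Lemma2.8*} to produce $B\not\in\I$, $B\subseteq A_{k+1}'$, with $B+(k+1)\in\D_\uu$. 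You instead simply test whether $A_{k+1}+(k+1)$ itself is $b$-bounded; if not, you extract a merely \emph{infinite} $L\subseteq A_{k+1}$ with $L+(k+1)\in\D_\uu$, which suffices because Claim~\ref{DDU}(b) requires only that $L$ be infinite. Your route is a shade more elementary and, notably, sidesteps Lemma~\ref{Lemma2.8*}, whose hypothesis that $\I$ be a $P$-ideal is not among the assumptions of Lemma~\ref{S4}.
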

\begin{proof} 
For a subset $A$ of $\N$, consider the property: 
\begin{itemize}
   \item[$(\mathrm{T}_A)$] there exists a decreasing sequence $(A_k)$ in $\P(\N)\setminus\I$ such that $A_k\subseteq A$, $A_k+k \in \B_\uu $ and $(A_k+k)\cap S=\emptyset$ for every $k\in\N$.
\end{itemize}
 Arguing by contradiction, we assume that $(\mathrm{U}_A)$ fails, and prove that  $\neg (\mathrm{U}_A)\Rightarrow (\mathrm{T}_A)\Rightarrow (\mathrm{U}_A)$, a contradiction.

First we verify that $\neg (\mathrm{U}_A)\Rightarrow(\mathrm{T}_A)$. Let $A_0:=A$. Fix $k\in\N$ and assume that all $A_j$ are built with the properties in $(\mathrm{T}_A)$, for every $j\in\{0,\ldots,k\}$. For $n\in A_k$ and $j\in\{0,\ldots,k\}$, we have $n+j\in A_j+j$ and $(A_j+j)\cap S=\emptyset$, thus $c_{n+j}=0$. By $\axdue$ there exists $A'\subseteq_\I A_k+k$ with $\lim\limits_{m\in A'}\frac{c_{m+1}}{b_{m+1}}=0$. As $A_k+k\not \in \I$, this yields $A' \not \in \I$. Then $A'' := A' \cap (S-1) \in\I$, by Claim~\ref{DDT}(a). As $A'\not \in \I$, this entails $A' \setminus A''\not \in \I$ and $A'' + 1 = (A'+1) \cap S\in \I$. Since obviously $(A' \setminus A'')+1= (A'+1) \setminus (A''+1) \subseteq S^*$, we obtain
\begin{equation}\label{July21}
A_{k+1}' := (A'\setminus A'') -k \subseteq A_k, \ A_{k+1}'+k = A'\setminus A''\ 
\mbox{ and }A_{k+1}'+k + 1 = (A'' \setminus A')+1 \subseteq S^*.
\end{equation}
Next we find  $A_{k+1}\subseteq A_{k+1}'$ such that $A_{k+1}\not\in\I$ and $A_{k+1}+(k+1)\in \B_\uu$. In fact, if  this is not true, then by Lemma~\ref{Lemma2.8*} there exists $B\in\P(\N)\setminus\I$, $B\subseteq A_{k+1}'$ with $B+(k+1)\in \D_\uu$. Since for every $j\in\{0,\ldots,k\}$, $B\subseteq A_{k+1}'\subseteq A_k\subseteq A_j$, $B+j\subseteq S^*$, in view of \eqref{July21} and our inductive hypothesis; moreover, $B+k+1\subseteq S^*$ by \eqref{July21}, and hence $S_{k+1}(B)\cap S=\emptyset$.  
As $B$ is an infinite set with $S_{k+1}(B)\cap S=\emptyset$ and $B+k+1\in \D_\uu$, Claim~\ref{DDU}(b) (applied with $B=L$ and $k+1$ in place of $k$) implies that $(\mathrm{U}_B)$ holds, and hence $(\mathrm{U}_A)$ holds as well, a contradiction. Therefore, there exists a set $A_{k+1}$ as desired, i.e., the inductive definition of $(A_k)$ is finished, and $(\mathrm{T}_A)$ holds.

Now we prove that $(\mathrm{T}_A) \Rightarrow (\mathrm{U}_A)$.  Let $(A_k)$  be a sequence according to $\mathrm{(T_A)}$. Choose a strictly increasing sequence $(n_k)$ with $n_k\in A_k$; then $n_k\in S_k(A_k)$ and $S_k(A_k)\cap S=\emptyset$. Hence, for $n:=n_k$ we have
$$
u_{n-1}x\equiv_\Z u_{n-1}\sum_{i>n+k}\frac{c_i}{u_i}\leq u_{n-1}\frac{1}{u_{n+k}}\leq2^{-k-1}.
$$
It follows $\lim\limits_{k\to\infty}\Vert u_{n_k-1}x\Vert=0$, so $(\mathrm{U}_A)$ holds.
\end{proof}

\subsubsection{Final stage of the sufficiency in Theorem~\ref{conjecture}}\label{final stage}

\begin{proposition}[Sufficiency in Theorem~\ref{conjecture}]\label{sufficiency} Let $\uu\in\A$, let $\I$ be a translation invariant free ideal of $\N$ and let $x\in[0,1)$ with $S=\supp (x)$. If $\ax\&\bx$ holds, then $\bar x\in t_\uu^\I(\T)$.
\end{proposition}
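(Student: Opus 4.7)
The plan is to invoke Lemma~\ref{UAholds}, which reduces the conclusion $\bar x\in t_\uu^\I(\T)$ to the verification of $(\mathrm{U}_A)$ for every $A\in\P(\N)\setminus\I$. I would fix such an $A$ and split into cases according to whether $A$ admits a $b$-bounded subset outside $\I$.

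First, suppose there is some $A'\in\B_\uu\cap\P(A)$ with $A'\notin\I$. Writing $A'=(A'\cap S)\sqcup(A'\setminus S)$, at least one piece lies outside $\I$. If $A'\cap S\notin\I$, set $A'':=A'\cap S$: this is a $b$-bounded subset of $A$, outside $\I$, and contained in $S$, so Lemma~\ref{S3} (fed by $\axuno$) gives $(\mathrm{U}_{A''})$, and the monotonicity $(\mathrm{U}_{A''})\Rightarrow(\mathrm{U}_A)$ closes this subcase. If instead $A'\cap S\in\I$, then $A'':=A'\setminus S$ is a $b$-bounded subset of $A$, outside $\I$, and disjoint from $S$; Lemma~\ref{S4} (fed by $\axdue$) then yields $(\mathrm{U}_{A''})$, and hence $(\mathrm{U}_A)$.

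Otherwise $\B_\uu\cap\P(A)\subseteq\I$, and Lemma~\ref{Lemma2.8*} produces $B\subseteq_\I A$ with $B\in\D_\uu$; since $A\notin\I$ while $A\setminus B\in\I$, necessarily $B\notin\I$. Lemma~\ref{L2.3} then consumes $\bx$ to deliver an infinite $B'\subseteq_\I B$ with $\lim\limits_{n\in B'}\varphi(u_{n-1}x)=0$, which witnesses $(\mathrm{U}_B)$ and hence $(\mathrm{U}_A)$.

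Virtually all the technical work has already been absorbed into Lemmas~\ref{L2.3}, \ref{S3}, and~\ref{S4}, so the proposition itself is essentially the exhaustive case split sketched in Remark~\ref{case3}, with Lemma~\ref{Lemma2.8*} neutralizing the awkward possibility that $A$ is neither $b$-bounded nor $b$-divergent by replacing it modulo $\I$ with a $b$-divergent subset. The genuinely hard step in the whole sufficiency argument is the inductive construction behind Lemma~\ref{S4} (producing a decreasing chain $(A_k)$ with $A_k+k\in\B_\uu$ and $(A_k+k)\cap S=\emptyset$), but that was already discharged before we reach the present proposition.
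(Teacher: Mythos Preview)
Your proof is correct and follows essentially the same approach as the paper: reduce to $(\mathrm{U}_A)$ via Lemma~\ref{UAholds}, use Lemma~\ref{Lemma2.8*} to split into the $b$-bounded and $b$-divergent cases, and in the $b$-bounded case invoke Remark~\ref{case3} together with Lemmas~\ref{S3} and~\ref{S4}. The only cosmetic difference is that the paper first records the two pure cases $A\in\D_\uu$ and $A\in\B_\uu$ before passing to the general reduction, whereas you go directly to the dichotomy ``does $A$ have a $b$-bounded subset outside $\I$?''; both arrangements arrive at the same three lemmas.
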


\begin{proof}  In view of Lemma~\ref{UAholds}, to prove that $\bar x\in t_\uu^\I(\T)$ it is enough to check that  $(\mathrm{U}_A)$ holds, for any arbitrarily chosen $A\in\P(\N)\setminus\I$. If $A\in\D_\uu$, apply Lemma~\ref{L2.3}. Assume now that $A\in\B_\uu$. Since $A\not\in\I$, we may assume that either $A\subseteq S$ or $A\cap S=\emptyset$, in view of Remark \ref{case3}. If $A\subseteq S$, apply Lemma~\ref{S3}, and if $A\cap S=\emptyset$, apply Lemma~\ref{S4}. 

In the general case, in view of Lemma~\ref{Lemma2.8*}, either there exists $A'\subseteq A$ with $A'\in\B_\uu$ and $A'\not\in\I$, or there exists $A'\in\D_\uu$ with $A'\subseteq_\I A$. In both cases we conclude that always $(\mathrm{U}_A)$ holds, as $(\mathrm{U}_{A'})$ holds in view of the previous two cases considered above. 
\end{proof}

What is behind this proof is the fact that it suffices to check $(\mathrm{U}_A)$ not for all $A\in\P(\N)\setminus\I$, but replacing $A\in\P(\N)\setminus\I$
by some smaller $A'\in\P(\N)\setminus\I$ with more convenient properties in two directions: with respect to $S$
($A'\subseteq S$ or $A'\cap S=\emptyset$), or with respect to $\uu$ ($A'\in\B_\uu\cup\D_\uu$). 
The first direction is covered by Remark \ref{case3}, the second by Lemma~\ref{Lemma2.8*}. 

\begin{proof}[\bf Proof of Theorem~\ref{conjecture}]
If $\bar x\in t_\uu^\I(\T)$, then $\ax\&\bx$ holds in view of  Proposition~\ref{necessity}. 
Vice versa, if 
$\ax\&\bx$ holds, then $\bar x\in t_\uu^\I(\T)$ by Proposition~\ref{sufficiency}.
\end{proof}

\begin{proof}[{\bf Proof of Corollary~\ref{IcB}}] If $\bar x\in t_\uu^\I(\T)$, then $\bx$ holds by Proposition~\ref{necessity}. Vice versa, assume that $\bx$ holds and let $A\in\P(\N)\setminus\I$. By our hypothesis $\B_\uu \subseteq \I$,  Lemma \ref{Lemma2.8*} gives that $A\in \D_\uu + \I$, namely, $A$ is $b$-divergent mod $\I$, that is, there exists  $A'\subseteq_\I A$ such that $A' \in \D_\uu $. As $A\not\in\I$, we have also $A'\not\in\I$.
Then Lemma~\ref{L2.3} entails that $(\mathrm U_{A'})$ holds, so $(\mathrm U_{A})$ holds as well.
\end{proof}

\section{The $\I$-splitting property}\label{Splitsec}

Beyond the ideals related to a sequence $\uu\in \A$ from Definition~\ref{IdealiDuBu}, one can define also the ideal $\SS_\uu:= B_\uu + D_\uu$, namely, the ideal of all unions $S = B \cup D$, with $B \in B_\uu$ and $D\in \D_\uu$. 
Clearly, $\B_\uu \cap \D_\uu = \F in$. Therefore, $B \cap D$ is finite, and so after replacing $D$ by $D \setminus B$ one can have actually a partition  $S = B \sqcup D$. Therefore, the  members of $\SS_\uu$ are the infinite $\uu$-splitting sets (see Definition~\ref{splittingdef}).

 The next two diagrams show the inclusions relating the ideals $\B_\uu$, $\D_\uu$ and $\mathcal S_\uu$.

$$\xymatrix@R-1pc{
& & \mathcal S_\uu+\I \ar@{-}[dl]\ar@{-}[dr] & &\\
& \B_\uu+\I \ar@{-}[d]\ar@{-}[ddr]\ar@{-}[ddl]& & \D_\uu+\I \ar@{-}[d] \ar@{-}[ddl]\ar@{-}[ddr]& \\
& (\B_\uu+\I)\setminus\D_\uu^*\ar@{-}[dd] & &(\D_\uu+\I)\setminus\B_\uu^*\ar@{-}[dd] &\\
\B_\uu \ar@{-}[ddr]& & \I \ar@{-}[dr]\ar@{-}[dl]& & \D_\uu \ar@{-}[ddl] \\
& \I\setminus \D_\uu^*\ar@{-}[dr] \ar@{-}[d]& & \I\setminus \B_\uu^*\ar@{-}[dl] \ar@{-}[d] & \\
& \B_\uu\cap \I\ar@{-}[dr] & (\I\setminus \D_\uu^*)\cap (\I\setminus \B_\uu^*)\ar@{-}[d] & \D_\uu\cap\I \ar@{-}[dl]&\\
& & \F in=\B_\uu\cap \D_\uu & &
}$$

When $\I=\P(\N)$, the above diagram  simplifies as follows.

$$\xymatrix@R-1pc{
 & & \P(\N) \ar@{-}[dr]\ar@{-}[dl]& & \\
& \P(\N)\setminus \D_\uu^*\ar@{-}[dr]\ar@{-}[dl] & & \P(\N)\setminus \B_\uu^* \ar@{-}[dr]\ar@{-}[dl]& \\
\B_\uu\ar@{-}[drr] & & \P(\N)\setminus (\B_\uu^*\cup \D_\uu^*)\ar@{-}[d] & & \D_\uu\ar@{-}[dll]\\
& & \F in=\B_\uu\cap \D_\uu & &
}$$

 A set $A\in\mathcal S_\uu+\I$ has the form $A=B\cup D\cup X$, with $B\in\B_\uu$, $D\in \D_\uu$ and $X\in \I$. Equivalently, $A$ has the form $A=B\cup D$, with $B$ $b$-bounded mod $\I$ and $D$ $b$-bounded mod $\I$.  Then the sets $A\in\mathcal S_\uu+\I$ are exactly those with the $\uu_\I$-splitting property.

\smallskip
As in \cite{BCsurvey}, fixed a free ideal $\I$ of $\N$, call an ideal $\mathcal J$ of $\N$ \emph{principal mod $\I$} if there exists $B\in\P(\N)$ such that $\mathcal J=\P(B)+\I$.

\begin{proposition}
For $\uu\in\A$ and $\I$ a free ideal of $\N$, the following conditions are equivalent: 
\begin{itemize}
  \item[(a)]  $\I^{bd}=\B_\uu+\I$ is principal modulo $\I$; 
  \item[(b)]  $\I^{div}=\D_\uu+\I$ is principal modulo $\I$; 
  \item[(c)] $\mathbf u$ has the $\I$-splitting property.
\end{itemize}
\end{proposition}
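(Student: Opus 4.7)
The plan is to establish the three-way equivalence by first proving the two implications out of (c) directly, and then closing the loop with (a) $\Rightarrow$ (c) via Lemma~\ref{Lemma2.8*} and (b) $\Rightarrow$ (c) by a mirror argument. The two reverse cycles are exact duals under the swap of $\B_\uu$ and $\D_\uu$; the disjointness $\B_\uu \cap \D_\uu = \F in$ is the main combinatorial engine behind all four implications.

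For (c) $\Rightarrow$ (a), take a witnessing partition $\N = B \sqcup D$ and claim $\I^{bd} = \P(B) + \I$. The inclusion $\P(B) + \I \subseteq \I^{bd}$ is immediate since $B \in \I^{bd}$ (either $B = \emptyset$ or $B$ is $b$-bounded mod $\I$) and $\I \subseteq \I^{bd}$. For the reverse, any $Y \in \I^{bd}$ admits a $b$-bounded witness $Y' \subseteq_\I Y$, and the task reduces to $Y \cap D \in \I$. Decomposing $Y \cap D = (Y' \cap D) \cup ((Y \setminus Y') \cap D)$ and picking $D' \subseteq_\I D$ with $D' \in \D_\uu$ (using that $D$ is either empty or $b$-divergent mod $\I$), one gets $Y' \cap D' \in \B_\uu \cap \D_\uu = \F in \subseteq \I$, while the other two pieces sit inside $\I$ automatically. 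The symmetric implication (c) $\Rightarrow$ (b) is proved by the mirror-image argument.

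For (a) $\Rightarrow$ (c), assume $\I^{bd} = \P(B_0) + \I$ and set $D_0 := \N \setminus B_0$. Since $B_0 \in \P(B_0) + \I = \I^{bd}$, a quick disjointness computation yields the key inclusion $\B_\uu \cap \P(D_0) \subseteq \I$: any $b$-bounded $Y \subseteq D_0$ lies in $\P(B_0) + \I$, so $Y \subseteq B_0 \cup E$ with $E \in \I$, and $Y \cap B_0 = \emptyset$ forces $Y \subseteq E \in \I$. Lemma~\ref{Lemma2.8*} applied to $D_0$ then gives either $D_0 \in \I$ or $D_0 \in \I^{div}$. Together with the fact that $B_0$ is $b$-bounded mod $\I$, this yields the desired partition $\N = B_0 \sqcup D_0$ as an $\I$-splitting, after absorbing either part into the other in the edge cases where one of $B_0, D_0$ is in $\I$ (using the remark after Definition~\ref{Isp}).

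The proof of (b) $\Rightarrow$ (c) proceeds along the same template with the roles of $\B_\uu$ and $\D_\uu$ exchanged: starting from $\I^{div} = \P(D_0) + \I$ and $B_0 := \N \setminus D_0$, the analogous disjointness argument gives $\D_\uu \cap \P(B_0) \subseteq \I$. The main obstacle is that the conclusion $B_0 \in \I^{bd}$ (or $B_0 \in \I$) requires the \emph{dual} of Lemma~\ref{Lemma2.8*}, namely that $\D_\uu \cap \P(A) \subseteq \I$ and $A \not \in \I$ imply $A \in \I^{bd}$. One would prove this dual by a parallel $P$-ideal pseudo-union argument applied to the $b$-bounded level sets $\{n \in A : b_n \leq k\}$: if every complement $\{n \in A : b_n > k\}$ is $\I$-positive, then either all level sets themselves lie in $\I$ (in which case the $P$-ideal pseudo-union $C$ produces a $b$-divergent $A \setminus C \subseteq_\I A$ outside $\I$, contradicting the hypothesis), or some level set directly witnesses $A \in \I^{bd}$. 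Making this case analysis rigorous, in particular handling the mixed case by iterating onto $\{n \in A : b_n > k\}$, is the delicate point.
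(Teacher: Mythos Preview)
Your arguments for (c)$\Rightarrow$(a), (c)$\Rightarrow$(b), and (a)$\Rightarrow$(c) are essentially the paper's: the paper also writes $\N_+ = B \sqcup D \sqcup X$ with $B\in\B_\uu$, $D\in\D_\uu$, $X\in\I$ and uses $\B_\uu\cap\D_\uu=\F in$ to get $\B_\uu\subseteq\P(B)+\I$; and for (a)$\Rightarrow$(c) it replaces $B$ by a genuinely $b$-bounded set, shows $\B_\uu\cap\P(D)\subseteq\I$, and invokes Lemma~\ref{Lemma2.8*}. Your version is just slightly more verbose.

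Your caution about (b)$\Rightarrow$(c) is well placed, and in fact your sketch of the ``dual Lemma~\ref{Lemma2.8*}'' does not go through. The specific gap is your Case 2: if some level set $A_k=\{n\in A:b_n\le k\}$ is \emph{not} in $\I$, this does \emph{not} ``directly witness $A\in\I^{bd}$'', because $b$-bounded mod $\I$ requires $A\setminus A_k\in\I$, not merely $A_k\notin\I$. Iterating onto $A\setminus A_k$ only produces an infinite tower of disjoint $\I$-positive $b$-bounded pieces, which yields nothing; what would be needed is a pseudo\emph{intersection} of the decreasing family $\{n\in A:b_n>k\}$ that stays outside $\I$, and this is \emph{not} a consequence of the $P$-ideal property.

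Worse, the dual statement is actually false, even for $\I=\I_d$. Take $E_k=\{n\in\N_+:\nu_2(n)=k-1\}$ (so $d(E_k)=2^{-k}$) and set $b_n=k+1$ for $n\in E_k$. Then any $b$-divergent $E$ satisfies $E\cap E_j$ finite for all $j$, hence $\overline d(E)\le\overline d(\bigcup_{j\ge k}E_j)=2^{-(k-1)}$ for every $k$, so $\D_\uu\subseteq\I_d$ and $\I^{div}=\I_d=\P(\emptyset)+\I_d$ is principal mod $\I_d$; yet no $b$-bounded set $B$ has $\overline d(\N\setminus B)=0$ (since $\N\setminus B\supseteq\bigcup_{j\ge k}E_j$), so $\N\notin\I^{bd}$ and the $\I_d$-splitting property fails. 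Thus (b)$\Rightarrow$(c) fails in this example, and the paper's laconic ``similar'' for this implication appears to hide a genuine asymmetry between $\B_\uu$ and $\D_\uu$ with respect to Lemma~\ref{Lemma2.8*}.
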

\begin{proof}
(c)$\Rightarrow$(a)\&(b) Assume that $\N_+=B\sqcup D\sqcup X$ where $B\in\B_\uu$, $D\in\D_\uu$ and $X\in\I$. Then $\B_\uu\subseteq\P(B)+\I$
and $\D_\uu\subseteq \P(D)+\I$, that is, $\B_\uu+\I=\P(B)+\I$ and $\D_\uu+\I=\P(D)+\I$.

(a)$\Rightarrow$(c) Suppose that $\B_\uu+\I=\P(B)+\I$ for some $B\in\B_\uu$. If $\P(B)+\I=\P(\N)$, take $D=\emptyset$.
Otherwise, letting $D:= \N\setminus B$, it is enough to check that $D$ is $b$-divergent mod $\I$. To this end we have to check, according to Lemma \ref{Lemma2.8*}, that $\B_\uu \cap \P(D) \subseteq \I$. Indeed, if $D' \subseteq D$ and $D' \in \B_\uu \subseteq \P(B)+\I$, then $D' = B' \cup I$, with $B' \subseteq B$ and $I \in \I$. 
Since $D' \cap B' =D \cap B =\emptyset$, we deduce that $D' = I \in \I$. Therefore, $\B_\uu \cap \P(D) \subseteq \I$. 

The proof of (b)$\Rightarrow$(c) is similar. 
\end{proof}

In the following immediate corollary we find the known splitting property from \cite{DiD} in terms of $\B_\uu$ and $\D_\uu$.
 
\begin{corollary}\label{Lemma:superato}
For $\uu\in\A$, the following conditions are equivalent: 
\begin{itemize}
\item[(a)] $\B_\uu$ is principal modulo $\F in$; 
\item[(b)] $\D_\uu$ is principal modulo $\F in$; 
\item[(c)] $\mathbf u$ has the splitting property.
\end{itemize}
\end{corollary}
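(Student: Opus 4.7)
The plan is simply to specialize the preceding proposition to the case $\I = \F in$; the corollary is labelled an ``immediate corollary'' precisely because the proposition, once proved, already does all the work, and what remains is pure unpacking of definitions. The main step is therefore to perform the two identifications needed to see that the specialization produces exactly (a), (b), (c) of the corollary.

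The first identification is $\F in^{bd} = \B_\uu$ and $\F in^{div} = \D_\uu$. Since $\B_\uu = \B_\uu^* \cup \F in$ and $\D_\uu = \D_\uu^* \cup \F in$ by Definition~\ref{IdealiDuBu}, both of these ideals contain $\F in$; hence $\F in^{bd} = \B_\uu + \F in = \B_\uu$ and $\F in^{div} = \D_\uu + \F in = \D_\uu$. With these identifications, conditions (a) and (b) of the preceding proposition, specialized to $\I = \F in$, become verbatim conditions (a) and (b) of Corollary~\ref{Lemma:superato}.

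The second identification matches the two notions of splitting property, namely that $\uu$ has the $\F in$-splitting property (Definition~\ref{Isp}) if and only if $\uu$ has the splitting property (Definition~\ref{splittingdef}). The key observation is that for any $X \subseteq \N$, being $b$-bounded mod $\F in$ is equivalent to being $b$-bounded: by definition, ``$b$-bounded mod $\F in$'' means the existence of a $b$-bounded $A \subseteq_{\F in} X$, i.e., $A \subseteq X$ with $X \setminus A$ finite, and a finite modification of a subset of $\N$ neither creates nor destroys the boundedness of $(b_n)$ indexed along it. The analogous statement holds for $b$-divergence. Combined with the tautology ``$B, D \not\in \F in$'' $\Longleftrightarrow$ ``$B, D$ infinite'', clauses (a)--(c) of Definition~\ref{Isp} reduce, for $\I = \F in$, to clauses (a)--(c) of Definition~\ref{splittingdef}. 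There is no real obstacle: the entire content of the corollary is already contained in the preceding proposition.
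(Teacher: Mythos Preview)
Your proposal is correct and matches the paper's intended approach: the corollary is stated without proof as an immediate specialization of the preceding proposition to $\I=\F in$, and you have accurately unpacked the two identifications ($\B_\uu+\F in=\B_\uu$, $\D_\uu+\F in=\D_\uu$, and the coincidence of the $\F in$-splitting property with the splitting property) that make this specialization work. The paper itself already records the key fact that ``$b$-bounded mod $\F in$'' coincides with ``$b$-bounded'' right before Definition~\ref{Isp}, so your second identification is even explicitly prepared in the text.
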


\begin{remark}\label{remark13}
Assume that $\I$ is a free ideal of $\N$ and that the sequence $\uu\in\A$ has the $\I$-splitting property witnessed by the partition $\N=B\sqcup D$.
If $D=\emptyset$, namely $B=\N$, this means that there exists $B'\subseteq_\I \N$ such that $B'$ is $b$-bounded, i.e., 
$\uu$ is $b$-bounded mod $\I$. 
Analogously when $B=\emptyset$, this means that $\uu$ is $b$-divergent mod $\I$. 
\end{remark}

Here,  under the assumption that $\uu\in\A$ has the $\I$-splitting property with respect to a translation invariant free $P$-ideal $\I$ of $\N$, first we prove the equivalence $\axuno\&\axdue\Leftrightarrow  \unox\&\duex\&\trex$ in Proposition \ref{unoduetre}, where for one implication we follow the proof of \cite[Theorem~3.8]{DG1} (which treats the case of the ideal $\I_\alpha$).
Then we give a proof of Corollary~\ref{ThGh:May29} as a direct consequence also of Theorem~\ref{conjecture}.

\begin{proposition}\label{unoduetre}
Let $\uu\in\mathcal A$, let $\I$ be a translation invariant free $P$-ideal of $\N$ and $x \in[0,1)$ with $S=\supp(x)$ and $S_b=\supp_b(x)$. If $\mathbf u$ has the $\I$-splitting property witnessed by the partition $\N=B\sqcup D$, then $\axuno\Leftrightarrow\unox$, $\axdue\Leftrightarrow\duex$ and $\bx\Leftrightarrow\trex$.
\end{proposition}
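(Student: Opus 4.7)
The plan is to exploit the partition $\N=B\sqcup D$ together with the $b$-bounded $B_0\subseteq_\I B$ and the $b$-divergent $D_0\subseteq_\I D$ supplied by the $\I$-splitting property. The central observation, used in every direction, is that any $A\in\B_\uu\setminus\I$ must satisfy $A\cap D\in\I$: indeed $A\cap D_0$ is both $b$-bounded and $b$-divergent, hence finite, and $D\setminus D_0\in\I$, so $A\cap D\subseteq(A\cap D_0)\cup(D\setminus D_0)\in\I$. By symmetry every $A\in\D_\uu\setminus\I$ has $A\cap B\in\I$. Thus every $\B_\uu$-set outside $\I$ lies in $B$ modulo $\I$, and every $\D_\uu$-set outside $\I$ lies in $D$ modulo $\I$. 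This is the bridge between the ``for all $A$'' formulation of $\ax$--$\bx$ and the fixed pair $(B,D)$ appearing in $\unox$--$\duex$--$\trex$.

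For the equivalence $\axuno\Leftrightarrow\unox$, the forward direction is trivial when $B\cap S\in\I$ (use translation invariance to get $(B\cap S)+1\in\I$, which makes both inclusions in $\unox$ hold vacuously). Otherwise apply $\axuno$ to $A=B_0\cap S$, which is $b$-bounded, contained in $S$, and not in $\I$ because $B_0\cap S\subseteq_\I B\cap S\notin\I$. The three conclusions on $B_0\cap S$ then transfer to $B\cap S$ via $B_0\cap S\subseteq_\I B\cap S$ and Lemma~\ref{New:Lemma}(c),(d), absorbing the difference set $(B\cap S)\setminus(B_0\cap S)\in\I$ by translation invariance. For the reverse direction, start from an arbitrary $A\in\B_\uu\setminus\I$ with $A\subseteq^\I S$, invoke the key observation to get $A\cap B\subseteq_\I A$, combine with $A\subseteq^\I S$ to get $A\cap B\cap S\subseteq_\I A$, and then pull $\unox$ back from $B\cap S$ to $A$: the inclusions $A\subseteq^\I S_b$ and $A+1\subseteq^\I S$ follow by chaining through $A\cap B\cap S\subseteq B\cap S$ and Lemma~\ref{New:Lemma}(c),(d). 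To produce the witness $A'$ required by $\axuno$, take the $C$ supplied by $\unox$, shrink it to $C\cap S_b$ (legitimate because $B\cap S\subseteq^\I S_b$), then set $A':=A\cap C\cap S_b$; chaining $A'\subseteq_\I A\cap B\cap S\subseteq_\I A$ delivers $A'\subseteq_\I A$, while $A'\subseteq C$ preserves the limit equal to $1$.

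The remaining two equivalences follow the same recipe. For $\axdue\Leftrightarrow\duex$: forward, apply $\axdue$ to $B_0\setminus S$ (whose intersection with $S$ is empty, and which is not in $\I$ iff $B\setminus S\notin\I$); reverse, for $A\in\B_\uu\setminus\I$ with $A\cap S\in\I$, deduce $A\cap B\setminus S\subseteq_\I A$ and apply $\duex$, intersecting the resulting witness with $A$ as in the previous step. For $\bx\Leftrightarrow\trex$: forward, apply $\bx$ to $D_0\cap S$ (using $D_0\cap S\subseteq_\I D\cap S$); reverse, for $A\in\D_\uu\setminus\I$ use $A\cap D\subseteq_\I A$ and split $A\cap D=(A\cap D\cap S)\sqcup(A\cap D\setminus S)$. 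On the first piece apply $\trex$ when $D\cap S\notin\I$; on the second piece $c_n=0$, so $\varphi(c_n/b_n)=0$ trivially; the union of the two witnesses is the desired $B\subseteq_\I A$, since the limit of $\varphi(c_n/b_n)$ along a union of two sets on each of which the limit is $0$ is still $0$. I expect the main bookkeeping effort in the reverse direction of $\unox\Rightarrow\axuno$: assembling a single $A'$ that is simultaneously $\subseteq_\I A$, contained in $S_b$, and supports the limit $1$ requires chaining several $\subseteq_\I$ and $\subseteq^\I$ relations through successive intersections with $C$, $A$, $B\cap S$, and $S_b$, with each step justified by Lemma~\ref{New:Lemma}.
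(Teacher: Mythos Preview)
Your proposal is correct and follows essentially the same route as the paper: both hinge on the observation that any $A\in\B_\uu\setminus\I$ satisfies $A\subseteq^\I B$ (and dually $A\in\D_\uu\setminus\I$ gives $A\subseteq^\I D$), then specialise $\ax$/$\bx$ to $B_0\cap S$, $B_0\setminus S$, $D_0\cap S$ for the forward directions and pull $\unox$--$\trex$ back to an arbitrary $A$ via that observation for the reverse directions. Your bookkeeping is in fact slightly more careful than the paper's in one spot: in $\unox\Rightarrow\axuno$ you intersect the witness with $S_b$ to secure $A'\subseteq S_b$, which the paper's choice $A'=C\cap A$ does not explicitly address.
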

\begin{proof} At least one between $B$ and $D$ is not in $\I$ and either $B=\emptyset$ or $B\not\in\I$ and either $D=\emptyset$ or $D\not\in\I$. In case $B$ (resp., $D$) is not empty, let $B' \subseteq_\I B$ be $b$-bounded (resp., $D'\subseteq_\I D$ be $b$-divergent).

\smallskip

$\axuno\Rightarrow\unox$  
By Remark \ref{Rem:June14}, $\unox$ is vacuously satisfied when $B\cap S \in \I$. This is why we are left with the case $B\cap S\not\in\I$. Then $B'\cap S \subseteq_\I B\cap S$ is $b$-bounded as well and $B'\cap S\not \in \I$.
Now $\axuno$, applied to $B'\cap S$, entails that $(B'\cap S)+1 \subseteq^{\I} S$, $B'\cap S \subseteq^{\I} S_b$ and the existence of a 
$C \subseteq_{\I} B'\cap S$ such that 
$\lim\limits_{n \in C}\frac{c_{n+1}+1}{b_{n+1}}=1$. 
Since $B'\cap S \subseteq_\I B\cap S$, so in particular $B\cap S\subseteq^\I B'\cap S$, from 
Lemma~\ref{New:Lemma}(b)-(d) it follows that $(B\cap S)+1 \subseteq^{\I} S$, $B\cap S \subseteq^{\I} S_b$ and $C \subseteq_{\I} B\cap S$. Therefore, $\unox$ holds.

\smallskip
$\unox\Rightarrow\axuno$  
Let $A\in\B_\uu\setminus \I$, so $A\subseteq^\I B$. Assume that $A\subseteq^\I S$, that is $A \setminus S\in\I$. Then $A\cap S\not\in\I$,  
and so $B\cap S\not\in\I$ since $A\cap S\subseteq^\I B\cap S$. 
By $\unox$, $(B\cap S)+1 \subseteq^{\I} S$, $B\cap S \subseteq^{\I} S_b$ and there exists $C \subseteq_\I B\cap S$ such that 
$\lim\limits_{n \in C}\frac{c_{n+1}+1}{b_{n+1}}=1$.  Since $A\setminus (B\cap S)=(A\setminus B)\cup (A\setminus S)\in\I$, we have that $A\subseteq^\I B\cap S,$ thus $A+1\subseteq^\I (B\cap S)+1.$ Now, applying twice  Lemma~\ref{New:Lemma}(d), since $A+1\subseteq^\I (B\cap S)+1 \subseteq^{\I} S$ 
and $A\subseteq^\I B\cap S\subseteq^{\I} S_b$, we get $A+1\subseteq^\I S$ and  $A\subseteq^\I S_b$. Moreover, letting $A'=C\cap A$, we have also $A'\subseteq_\I A$ (as $A\setminus A' = A\setminus C \subseteq (A \setminus (B\cap S)) \cup ((B\cap S) \setminus C)\in\I$) 
and $\lim\limits_{n\in A'}\frac{c_{n+1}+1}{b_{n+1}}=1$.

\smallskip
$\axdue\Rightarrow\duex$
Assume that $B\setminus S\not\in\I$, so that $B'\setminus S\not\in\I$ as well. Since $(B'\setminus S)\cap S=\emptyset$ and $B'\setminus S$ is $b$-bounded, by $\axdue$ applied to $B'\setminus S$ there exists $C\subseteq_\I B'\setminus S$ (so, $C=C\setminus S \subseteq_\I B\setminus S$ as well) such that $\lim\limits_{n\in C}\frac{c_{n+1}}{b_{n+1}}=0$, namely, $\duex$ holds.

\smallskip

$\duex\Rightarrow\axdue$ 
 Let $A\in\B_\uu\setminus \I$, so $A\subseteq^\I B$, and assume that $A\cap S\in\I$. Then $A\setminus S\not\in\I$ (as otherwise $A=(A\setminus S) \cup (A\cap S) \in \I$), $A\setminus S\subseteq^\I B\setminus S$ (since $(A\setminus S) \setminus (B\setminus S) \subseteq A\setminus B \in \I)$, hence  $B\setminus S\not\in\I$. By $\duex$, there exists $C\subseteq_{\I} B\setminus S$ such that $\lim\limits_{n \in C}\frac{c_{n+1}}{b_{n+1}}=0$. Let $A'=A\cap C$. Then $A'\subseteq_\I A$ as $A\cap S\in\I$ and $A\cap C\subseteq_\I A\cap (B\setminus S)\subseteq_\I A\setminus S$, since $A\setminus S\subseteq^\I B\setminus S$. Moreover, $A'\cap S=\emptyset$ since $A'\subseteq B\setminus S$, and $\lim\limits_{n\in A'}\frac{c_{n+1}}{b_{n+1}}=0$ since $A'\subseteq C$.

\smallskip 
$\bx\Rightarrow\trex$ If $D\cap S\not\in\I$, then $D'\cap S\not\in\I$ and $D'\cap S$ is $b$-divergent. By $\bx$ applied to $D'\cap S$, there exists $E\subseteq_\I D'\cap S$, such that $\lim\limits_{n\in E}\varphi\left(\frac{c_n}{b_n}\right)=0$, that is, $\trex$ holds since $D'\cap S\subseteq_\I D\cap S$ and so $E\subseteq_\I D\cap S$ as well.

$\trex\Rightarrow\bx$ 
If $A\in\D_\uu\setminus\I$, then $A\subseteq^\I D$.  
So, if $A\cap S\in\I$, then $A\setminus S \subseteq_\I A$   and 
$\bx$ trivially holds, since $\lim\limits_{n \in A\setminus S} \varphi\left(\frac{c_n}{b_n}\right)=0$, as $c_n=0$ for every $n\in A\setminus S$.

If $A\cap S\not\in\I$, from $A\cap S\subseteq^\I D\cap S$ we conclude that $D\cap S\not\in\I$.  So, in view of $\trex$ there exists $E\subseteq_\I D\cap S$ with $\lim\limits_{n \in E} \varphi\left(\frac{c_n}{b_n}\right)=0$. 
 Then $A \cap E \subseteq_\I A \cap D\cap S$ (as $E\subseteq_\I D\cap S$) and $A \cap D\cap S \subseteq_\I A\cap S$ (as $A\cap S \subseteq^{\I} D\cap S$), hence $A \cap E \subseteq_\I A\cap S$ by Lemma~\ref{New:Lemma}(d). 
Therefore  for $A':= (A\cap E)\sqcup (A\setminus S)$ we have $A' \subseteq_\I A = (A\cap S) \sqcup (A\setminus S)$. Moreover, $\lim\limits_{n \in A'} \varphi\left(\frac{c_n}{b_n}\right)=0$ since $A'\subseteq (E\sqcup A)\setminus S$.
\end{proof}

\begin{proof}[\bf Proof of Corollary~\ref{ThGh:May29}]
For  $\uu\in\mathcal A$, a translation invariant free $P$-ideal $\I$ of $\N$ and $x \in[0,1)$
we have to prove that if $\mathbf u$ has the $\I$-splitting property, then $\bar x\in t_\uu^\I(\T) \Leftrightarrow \unox\&\duex\&\trex$ holds.
This equivalence follows from Theorem~\ref{conjecture} and Proposition~\ref{unoduetre}.
\end{proof}

\section{Sufficient conditions for $\bar x \notin t^{\I}_\uu(\T)$}\label{notinTsec}

Following \cite[\S 5]{DDB}, in this section we address the natural problem of finding specific elements not in $t_\uu^\I(\T)$.

\subsection{When $\uu$ is $b$-bounded}\label{notb}

First we deal with the case when $\uu\in\mathcal A$ is $b$-bounded. 
In the next result we give a correct and generalized version of \cite[Proposition~5.1]{DDB}. In view of Lemma~\ref{ginfty}, in (a) $c(\supp(x))<\infty$ implies that $\supp(x)^*\not\in\I$ and in (b) $g(\supp(x))<\infty$ implies that $\supp(x)\not\in\I$.

\begin{theorem}\label{Scg}
Let $\uu\in\A$ be $b$-bounded, let $\I$ be a translation invariant free ideal of $\N$ and let $x\in[0,1)$. Then $\bar x\not\in t_\uu^\I(\T)$ whenever at least one of the following conditions hold:
\begin{itemize}
   \item[(a)] $\supp(x)\not\in\I$ and $c(\supp(x))<\infty$;
   \item[(b)] $\supp(x)^*\not\in\I$ and $g(\supp(x))<\infty$.
\end{itemize}
\end{theorem}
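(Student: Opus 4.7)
The plan is to use Theorem \ref{conjecture} and show that the appropriate sub-clause of $\ax$ is violated for a well-chosen $A\in\P(\N)\setminus\I$; condition $\bx$ is vacuously satisfied, since $\uu$ being $b$-bounded forces $\D_\uu=\F in\subseteq\I$, so no $A\notin\I$ can belong to $\D_\uu$. The hypotheses of (a) and (b), together with Lemma \ref{ginfty} and the translation invariance of $\I$, guarantee in both cases that both $S:=\supp(x)$ and $S^*$ lie outside $\I$ (as noted in the remark preceding the theorem); writing $S=\bigcup_n[a_n,b_n]$ for the decomposition into maximal intervals, $S$ is then infinite, non-cofinite, and has infinitely many bounded maximal intervals.

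For case (b), where $g:=g(S)<\infty$, I take $A=S$, so $A\subseteq^\I S$ is automatic; $\axuno$ would then force $F:=(S+1)\setminus S\in\I$. Since $F=\{b_n+1:n\in\N\}$ is (up to the finite set $[0,a_0)$) the collection of starting points of the maximal intervals of $S^*$, each of which has length at most $g$, one obtains
$$
S^*\subseteq[0,a_0)\cup\bigcup_{i=0}^{g-1}(F+i).
$$
If $F\in\I$, Lemma \ref{ti1} gives $F+i\in\I$ for every $i$, hence $S^*\in\I$, contradicting $S^*\notin\I$; therefore $\axuno$ fails.

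For case (a), where $c:=c(S)<\infty$, I take $A=S^*$, so $A\cap S=\emptyset\in\I$; $\axdue$ would then produce $B'\subseteq_\I S^*$ with $\lim_{n\in B'}c_{n+1}/b_{n+1}=0$. The key reduction exploits $b$-boundedness of $\uu$: since $b_n$ is bounded above and $c_{n+1}\in\N$, the ratio tending to $0$ forces $c_{n+1}=0$ for almost every $n\in B'$, equivalently $(B'+1)\cap S\in\F in$. Combined with $(S^*\setminus B')+1\in\I$ (translation invariance of $\I$ applied to $S^*\setminus B'\in\I$), this yields $(S^*+1)\cap S\in\I$. However, a direct inspection shows that $(S^*+1)\cap S$ coincides with the set $E:=\{a_n:n\in\N,\,a_n\geq 1\}$ of positive starting points of the maximal intervals of $S$, and the bound $c$ on interval lengths gives $S\subseteq\{0,\dots,c-1\}\cup\bigcup_{i=0}^{c-1}(E+i)$. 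If $E\in\I$, translation invariance would force $S\in\I$, contradicting $S\notin\I$; so $E\notin\I$, contradicting $(S^*+1)\cap S\in\I$, and $\axdue$ fails.

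The main delicacy lies in the reduction, in case (a), of the analytic condition ``$c_{n+1}/b_{n+1}\to 0$ along $B'$'' to the purely combinatorial one ``$c_{n+1}=0$ eventually along $B'$''; the rest of the argument rests, in both cases, on the single observation that a set outside the translation-invariant ideal $\I$ cannot be covered (up to a finite set) by finitely many translates of a set in $\I$.
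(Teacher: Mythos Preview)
Your argument is correct in spirit but has a genuine hypothesis mismatch: you invoke Theorem~\ref{conjecture} (more precisely, the necessity direction, Proposition~\ref{necessity}), which requires $\I$ to be a $P$-ideal, whereas Theorem~\ref{Scg} assumes only that $\I$ is a translation invariant free ideal. The $P$-ideal hypothesis is used in an essential way in the proof of Proposition~\ref{necessity} (via Lemma~\ref{I^*1}) to produce a single $N\in\I^*$ along which $u_nx\to 0$, and without it neither $\axuno$ nor $\axdue$ is available. So as written, your proof establishes the theorem only under a strictly stronger hypothesis on $\I$.

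Under the additional $P$-ideal assumption your argument is correct and rather elegant; the reduction in case~(a) from $c_{n+1}/b_{n+1}\to 0$ to $c_{n+1}=0$ eventually, via the bound $c_{n+1}/b_{n+1}\geq 1/M$ whenever $c_{n+1}\neq 0$, is exactly the right use of $b$-boundedness. The paper, by contrast, avoids Theorem~\ref{conjecture} entirely and gives a direct elementary argument: in case~(a) it shows that $\{u_nx\}\in[1/M,\,1-1/M^{\ell+1}]$ for every $n\in\supp(x)-1$, and in case~(b) that $\{u_nx\}\in[1/M^{\ell+1},\,1/2]$ for every $n$ with $n+1$ in a gap of $\supp(x)$; in each case the relevant index set is not in $\I$, so $u_nx$ cannot $\I$-converge to $0$. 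This direct route needs no $P$-ideal hypothesis and is self-contained, at the cost of explicit computations with the representation~\eqref{ex-4}; your route is structurally cleaner but, to match the stated generality, you would need to re-derive the specific consequences $(S+1)\setminus S\in\I$ and $(S^*+1)\cap S\in\I$ directly from $\bar x\in t_\uu^\I(\T)$ without passing through the full strength of Proposition~\ref{necessity}.
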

\begin{proof}
Since $\uu$ is $b$-bounded, let $M\in\N_+$ such that $b_n\leq M$ for every $n\in\N_+$.
From the hypothesis $\supp(x)\not\in\I$ (respectively, $\supp(x)^*\not\in\I$) we deduce that $\I$ is a proper ideal, and that $\supp(x)$ is infinite and not cofinite. Hence, we can write $\supp(x)=\bigcup_{n\in\N}[k_n,l_n]$ with $k_n\leq l_n<k_{n+1}-1$ for every $n\in\N$.
Furthermore, for the sake of convenience and since $\I$ is free, we assume that $0 \not \in \supp(x)$. 

\smallskip
(a) Assume that $\supp(x)\not\in\I$ and $c(\supp(x))<\infty$.  Since $c(\supp(x))$ is finite, there exists $\ell\in\N_+$ such that $c(\supp(x))=\ell$, which means that $l_n-k_n+1\leq\ell$ for every $n\in\N$. 

Let $E = \left\{n\in \N: \{u_nx\}\in\left[\frac{1}{M},1-\frac{1}{M^{\ell+1}}\right]\right\}$. To prove that $\bar x\not\in t_\uu^\I(\T)$ it is enough to check that $E \not \in \I$. To this end we show that $\supp(x) - 1 \subseteq E$, since $\supp(x)-1\not\in\I$, being $\I$ translation invariant. 
Indeed, if $n\in\supp(x) - 1$, then there exists $m\in\N$ such that $n+1\in[k_m,l_m]$.
Hence, $$\{u_nx\}\geq\frac{u_n}{u_{n+1}}=\frac{1}{b_{n+1}}\geq\frac{1}{M}.$$
On the other hand, $$\{u_nx\}\leq u_n\left(\sum_{\supp(x) \ni i>n}
\frac{b_i-1}{u_i}-\frac{1}{u_{l_m+1}}\right)\leq 1-\frac{u_n}{u_{{l_m}+1}}\quad\text{and}\quad \frac{u_n}{u_{{l_m}+1}}=\frac{1}{b_{n+1}\ldots b_{l_m+1}}\geq\frac{1}{M^{\ell+1}},$$ since $l_m+1-(n+1)+1=l_m-n+1\leq l_m-k_m+2\leq\ell+1$.
Then $\{u_nx\}\leq 1-\frac{1}{M^{\ell+1}}$, so $n \in E$. 
This finishes the proof of the inclusion $\supp(x) - 1 \subseteq E$, which ensures that $\bar x\not\in t_\uu^\I(\T)$.


\smallskip
(b) Assume that $\supp(x)^*\not\in\I$ and $g(\supp(x))<\infty$. Write $\supp(x)=\bigcup_{n\in\N}[k_n,l_n]$ with $k_n\leq l_n<k_{n+1}-1$ for every $n\in\N_+$. Since $g(\supp(x))$ is finite, there exists $\ell\in\N_+$ such that $g(\supp(x))=\ell$, which means that $k_{n+1}-l_n-1\leq\ell$ for every $n\in\N$.

Let $F= \left\{n\in \N: \{u_nx\}\in\left[\frac{1}{M^{\ell+1}},\frac{1}{2}\right]\right\}$. To prove that $\bar x\not\in t_\uu^\I(\T)$ it is enough to check that $F\not\in \I$. To this end we show that $S'-1\subseteq F$, where $S'=\bigcup_{n\in\N}(l_n,k_{n+1})$;  
since $\supp(x)^*\not\in\I$, so also $S'=\bigcup_{n\in\N}(l_n,k_{n+1})\not\in\I$, and then $S'-1\not\in\I$ as $\I$ is translation invariant.
So, let $n\in\N$ with $n+1\in\supp(x)^*$. Then there exists $m\in\N$ such that $n+1\in(l_m,k_{m+1})$. Since 
$$
\{u_nx\}=u_n \sum_{\stackrel{i\in\supp(x)}{i\geq k_{m+1}}}\frac{c_i}{u_i}\geq \frac{u_n}{u_{k_{m+1}}}=\frac{1}{b_{n+1}\ldots b_{k_{m+1}}},
$$ we get that $\{u_nx\}\geq\frac{1}{M^{\ell+1}}$, since $k_{m+1}-(n+1) + 1 \leq k_{m+1}-l_m\leq\ell+1$.
On the other hand, since $n+1\not\in\supp(x)$, $$\{u_nx\}\leq u_n\sum_{\stackrel{i\in\supp(x)}{i\geq n+2}}\frac{b_i-1}{u_i}=\frac{u_n}{u_{n+1}}=\frac{1}{b_{n+1}}\leq\frac{1}{2}.$$
Hence, $n\in F$.
This finishes the proof of the inclusion $S'-1 \subseteq F$, which ensures that $\bar x\not\in t_\uu^\I(\T)$.
\end{proof}

The above mentioned~\cite[Proposition~5.1]{DDB} deals with the case $\I=\I_d$, but the hypothesis $\supp(x)\not\in\I_d$ is missed in its statement, and the necessity of such an hypothesis is witnessed by Lemma~\ref{Lemma2.2}. Moreover, it states that under the assumption that there exists $\ell\in\N$ such that $c_n(\supp(x))-1\leq \ell$ and $g_n(\supp(x))+1\leq \ell$ for every $n\in\N$, then $\bar x\not\in t_\uu^s(\T)$. The following example shows that at least $g(\supp(x))\geq1$ has to be imposed.

\begin{example}
For every $n\in\N$,  let $l_n= 2n$ and $m_n = 2n+1$, so $\N=\{0=l_1<m_1<l_2<m_2< \ldots\}$. Then, for any $x\in [0,1)$ with $\supp(x) = \bigcup_{n=1}^\infty [l_n,m_n]$, the hypothesis that there exists $\ell\in\N$ such that $c_n(\supp(x))-1=m_n-l_n\leq \ell$ and $g_n(\supp(x))+1=l_{n+1}-m_n\leq \ell$ of~\cite[Proposition 5.1]{DDB} is satisfied with $\ell=1$.

Since for such $x$, $\supp(x)=\N$, the conditions $\ix$ and  $\axdue$ 
are satisfied, so $\bar x \in t_\uu^\I(\T)$ whenever $\supp_b(x)\subseteq_\I \supp(x)$ by Theorem~\ref{Nuovo:Th}, a contradiction with the conclusion of~\cite[Proposition 5.1]{DDB}. 

(For a specific example, take $\uu$ with $u_n = 3^n$ for every $n\in\N$, and $x$ defined by $c_n = 1$ when $n=k^2$ is a perfect square, and $c_n = 2$ otherwise.) 
\end{example}

The following are direct consequences of Theorem~\ref{Scg}.

\begin{corollary}
Let $\uu\in\A$ be $b$-bounded, let $\I$ be a translation invariant free ideal of $\N$ and let $x\in[0,1)$ with $\supp(x)\not\in\I$ and $\supp(x)^*\not\in\I$. If $\bar x\in t_\uu^\I(\T)$, then $c(\supp(x))=g(\supp(x))=\infty$.
\end{corollary}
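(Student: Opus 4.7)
The plan is to derive this as the contrapositive of Theorem~\ref{Scg}. Suppose, for contradiction, that $\bar x\in t_\uu^\I(\T)$ but one of $c(\supp(x))$ or $g(\supp(x))$ is finite. I will show that in either case one of the sufficient conditions (a) or (b) of Theorem~\ref{Scg} is satisfied, forcing $\bar x\notin t_\uu^\I(\T)$ and producing the desired contradiction.

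First, assume $c(\supp(x))<\infty$. Combined with the standing hypothesis $\supp(x)\notin\I$, this is exactly condition (a) of Theorem~\ref{Scg}, so that theorem yields $\bar x\notin t_\uu^\I(\T)$, contradicting our assumption. Analogously, if $g(\supp(x))<\infty$, then together with $\supp(x)^*\notin\I$ we obtain condition (b) of Theorem~\ref{Scg}, again giving $\bar x\notin t_\uu^\I(\T)$, a contradiction. Therefore both $c(\supp(x))=\infty$ and $g(\supp(x))=\infty$ must hold.

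There is essentially no obstacle here, as the two hypotheses $\supp(x)\notin\I$ and $\supp(x)^*\notin\I$ are precisely the ones needed to activate the two alternative hypotheses of Theorem~\ref{Scg}; the corollary is just the logical contrapositive packaged for the case when both halves of the support are ``large'' in the sense of $\I$. The only thing worth noting explicitly is that the hypothesis $\uu$ is $b$-bounded and $\I$ is a translation invariant free ideal transfers verbatim from the corollary to the theorem, so no additional verification is required.
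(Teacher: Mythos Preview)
Your proof is correct and takes essentially the same approach as the paper, which simply records this corollary as a direct consequence of Theorem~\ref{Scg} via the contrapositive.
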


\begin{corollary}
Let $\uu\in\A$ be $b$-bounded, let $\I$ be a translation invariant free ideal of $\N$ and let $x\in[0,1)$.
\begin{itemize}
\item[(a)]  If $c(\supp(x))<\infty$, then $\bar x\in t_\uu^\I(\T)$ if and only if $\supp(x)\in\I$.
\item[(b)] If $g(\supp(x))<\infty$ and $\bar x\in t_\uu^\I(\T)$, then $\supp(x)^*\in\I$. 
\end{itemize}
\end{corollary}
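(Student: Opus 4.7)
The plan is to deduce the corollary directly from Theorem~\ref{Scg} combined with Lemma~\ref{Lemma2.2}, after a trivial reduction. If $\I=\P(\N)$, then $t_\uu^\I(\T)=\T$ and every subset of $\N$ lies in $\I$, so both statements become vacuous; hence I may assume that $\I$ is a proper translation invariant free ideal of $\N$ from now on.

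For item (a), the forward implication is the contrapositive of Theorem~\ref{Scg}(a): if $c(\supp(x))<\infty$ and $\bar x\in t_\uu^\I(\T)$, then $\supp(x)\in\I$, for otherwise Theorem~\ref{Scg}(a) would yield $\bar x\notin t_\uu^\I(\T)$. For the converse, assume $\supp(x)\in\I$. Since $\I$ is translation invariant, every element of $\I$ is automatically $\I$-translation invariant, so in particular $\supp(x)$ is. Lemma~\ref{Lemma2.2} applied to the proper free ideal $\I$ then directly gives $\bar x\in t_\uu^\I(\T)$. For item (b), I argue again by contraposition: assuming $g(\supp(x))<\infty$ and $\supp(x)^*\notin\I$, Theorem~\ref{Scg}(b) yields $\bar x\notin t_\uu^\I(\T)$, contradicting the hypothesis.

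There is no serious obstacle, as the corollary is a clean packaging of two previously proved facts. The only point worth flagging is the asymmetry between (a) and (b): the implication in (b) is only one-directional, because when $g(\supp(x))<\infty$ the mere condition $\supp(x)^*\in\I$ places no constraint on the digits $c_n$ along the large set $\supp(x)$, and so no Lemma~\ref{Lemma2.2}-type converse can be expected. In (a), by contrast, $\supp(x)\in\I$ controls all nonzero digits at once, which is precisely what Lemma~\ref{Lemma2.2} requires.
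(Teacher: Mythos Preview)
Your proof is correct and follows essentially the same approach as the paper: both directions of (a) come from Theorem~\ref{Scg}(a) and Lemma~\ref{Lemma2.2}, and (b) is the contrapositive of Theorem~\ref{Scg}(b). Your explicit reduction to the case of a proper ideal is a small extra bit of care (Lemma~\ref{Lemma2.2} is stated for proper $\I$), which the paper leaves implicit.
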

\begin{proof}
(a) If $\bar x\in t_\uu^\I(\T)$ and $c(\supp(x))<\infty$, then $\supp(x)\in\I$ by Theorem~\ref{Scg}. If $\supp(x)\in\I$, then $\bar x\in t_\uu^\I(\T)$ by Lemma~\ref{Lemma2.2}.

(b) If $\bar x\in t_\uu^\I(\T)$ and $g(\supp(x))<\infty$, then $\supp(x)^*\in\I$ by Theorem~\ref{Scg}. 
\end{proof}

An equivalence in item (b) of the above corollary is not available as the next example shows: 

\begin{example}\label{supp*no} To see that $\supp(x)^*\in\I$ does not imply $\bar x\in t_\uu^\I(\T)$ even under the assumption $g(\supp(x))<\infty$, pick a proper free ideal $\I$  of $\N$, $\uu=(3^n)$ and $x=\sum_{i=1}^\infty \frac{1}{3^n}$. 
Then $u_nx\equiv_\Z \frac{1}{2}$ for every $n\in\N$, hence $\bar x\not\in t_\uu^\I(\T)$. On the other hand, $\supp(x)=\N_+$, so $g(\supp(x))=0<\infty$ and $\supp(x)=\N_+\in\I^*$.
\end{example}


\subsection{General case}\label{notgc}

The next result extends both \cite[Proposition 2.13]{Ghosh} and  \cite[Proposition 5.2]{DDB}. With respect to \cite[Proposition 5.2]{DDB} we relax the condition ``$b$-divergent" on the sequence $\uu\in\A$, 
with a more careful, yet, simpler, proof. 

\begin{proposition}\label{Ghosh:Prop}
Let $\uu\in\mathcal A$ and let $\I$ be a translation invariant free ideal of $\N$. Let $x\in[0,1)$ with $\supp(x)\not\in\I$. If there exist $m_1, m_2 \in \R$ with $0<m_1\leq m_2 < \frac{1}{2}$ and for all $n \in\supp(x)$, $\frac{c_n}{b_n} \in [m_1,m_2]$, then $\bar x \notin t^{\I}_\uu(\T)$.
\end{proposition}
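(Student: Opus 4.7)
The plan is to exhibit a uniform $\delta>0$ such that $\|u_{n-1}x\|\ge \delta$ for every $n\in\supp(x)$, and then use translation invariance of $\I$ together with the assumption $\supp(x)\notin\I$ to conclude that $\{k\in\N:\|u_kx\|\ge\delta\}\notin\I$, which is exactly the negation of $\bar x\in t_\uu^\I(\T)$.

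The key computational input is the identity \eqref{eqn:ug6} from Lemma~\ref{ug10}(a), which states
\[
\{u_{n-1}x\}=\frac{c_n}{b_n}+\frac{\{u_nx\}}{b_n}\qquad\text{for every }n\in\N_+.
\]
For $n\in\supp(x)$ our hypothesis gives $\frac{c_n}{b_n}\in[m_1,m_2]$, while $\{u_nx\}\in[0,1)$ and $b_n\ge 2$. Hence
\[
m_1\le \{u_{n-1}x\}\le \frac{c_n}{b_n}+\frac{1}{b_n}\le m_2+\frac12.
\]
Since $0<m_1$ and $m_2+\tfrac12<1$ (because $m_2<\tfrac12$), the fractional part $\{u_{n-1}x\}$ is uniformly bounded away from both $0$ and $1$, so setting $\delta:=\min\!\bigl(m_1,\,\tfrac12-m_2\bigr)>0$ we obtain $\|u_{n-1}x\|\ge\delta$ for every $n\in\supp(x)$.

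The set $E:=\{k\in\N:\|u_kx\|\ge\delta\}$ therefore contains $\supp(x)-1$. Since $\supp(x)\subseteq\N_+$, we have $\supp(x)-1\subseteq\N$, and by Lemma~\ref{ti1} (translation invariance of $\I$ applied in the contrapositive direction) $\supp(x)\notin\I$ forces $\supp(x)-1\notin\I$. Consequently $E\notin\I$, so $u_nx$ does not $\I$-converge to $0$, i.e.\ $\bar x\notin t_\uu^\I(\T)$.

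There is no real obstacle here: both the estimate and the translation-invariance step are short. The one spot worth stating carefully is the uniform two-sided bound on $\{u_{n-1}x\}$ (the bound from above uses both $b_n\ge 2$ and $m_2<\tfrac12$); without the strict inequality $m_2<\tfrac12$ one loses the gap from~$1$ and the argument collapses.
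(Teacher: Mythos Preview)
Your proof is correct and follows the same strategy as the paper: both show that $\{u_{n-1}x\}$ is uniformly bounded away from $0$ and $1$ for every $n\in\supp(x)$, and then invoke translation invariance of $\I$ to conclude that $\supp(x)-1\notin\I$. The only difference is cosmetic: for the upper bound the paper sums the tail of the series directly to obtain $\{u_{n-1}x\}\le 2m_2$, whereas you use the recursion \eqref{eqn:ug6} to get the (slightly weaker but equally sufficient) bound $\{u_{n-1}x\}<m_2+\tfrac12$.
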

\begin{proof} 
Let $x \in [0,1)$. Without loss of generality we assume that $0\notin \supp(x)$, and let $A_1=\supp(x)-1$.
Then $A_1 \notin \I$, as $\I$ is translation invariant. Moreover, for all $n\in A_1$, as $c_k\geq m_1  b_k$ when $k \in \supp(x)$,
$$u_nx \equiv_\Z u_n \cdot\sum\limits_{\stackrel{k\in \supp(x)}{k>n}}\frac{c_k}{u_k}\geq u_n\cdot \sum\limits_{\stackrel{k\in \supp(x)}{k>n}}\frac{m_1 b_k}{u_k}= u_n\cdot \sum\limits_{\stackrel{k\in \supp(x)}{k>n}}\frac{m_1}{u_{k-1}} \geq u_n \cdot \frac{m_1}{u_n}=m_1;$$
therefore, $\{u_n x\}\geq m_1$.

As $u_k = u_n b_{n+1} \ldots b_k$ for $k > n$, and $b_n \geq 2$ for every $n \in \N_+$, we deduce that $\frac{u_n}{u_k} \leq \frac{1}{2^{k-n}}$ for $n\leq k$ in $\N$. Therefore, for $n \in A_1$,  as $c_k \leq m_2  b_k$ for every $k\in\N_+$, 
$$\{u_nx\}=u_n\cdot \sum\limits_{\stackrel{k\in \supp(x)}{k>n}}\frac{c_k}{u_k} \leq  u_n \cdot \sum\limits_{\stackrel{k\in \supp(x)}{k>n}}\frac{m_2 b_k}{u_k}=
m_2\cdot\sum\limits_{\stackrel{k\in \supp(x)}{k>n}}\frac{u_n}{u_{k-1}} \leq m_2 \cdot \sum_{k>n}\frac{1}{2^{k-1-n}}=2m_2<1;$$
hence, $\{u_nx\}\leq 2m_2$.

We have obtained that for every  $n \in A_1$, $\{u_nx\} \in [m_1, 2m_2]$. As $A_1 \notin \I$, this yields $\bar x \notin t^{\I}_\uu(\T)$. 
\end{proof}

This proposition was given in \cite[Proposition 2.13]{Ghosh}, asking $\I$ to be in addition an analytic $P$-ideal, but without any proof; only a reference to  \cite[Proposition 5.2]{DDB} is given saying ``The proof follows from similar line of arguments as in \cite[Proposition 5.2]{DDB} and so is omitted." A look at  \cite[Proposition 5.2]{DDB} shows that the sequence $\uu$ is assumed to be $b$-divergent (and this is used essentially in the proof), whereas no such restraint is imposed in \cite[Proposition 2.13]{Ghosh}. 

\smallskip
The following consequence of Proposition~\ref{Ghosh:Prop} is \cite[Corollary 2.14]{Ghosh} in which we relax the hypotheses on the ideal. It generalizes and reinforces \cite[Corollary~5.3]{DDB}.

\begin{corollary}\label{Corollary2.14}
Let $\uu\in\mathcal A$, let $\I$ be a translation invariant free ideal of $\N$ and $B \subseteq \N$. Then there exists $x \in [0,1)$ with $\supp(x) \subseteq B$ such that $\bar x \notin t^{\I}_\uu(\T)$ if and only if $B \not\in\I$.
\end{corollary}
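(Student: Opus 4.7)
The forward implication is immediate: if $B\in\I$ and $\supp(x)\subseteq B$, then $\supp(x)\in\I$, and the translation invariance of $\I$ makes $\supp(x)$ automatically $\I$-translation invariant; Lemma~\ref{Lemma2.2} then forces $\bar x\in t^{\I}_\uu(\T)$, contradicting the hypothesis (the case $\I=\mathcal P(\N)$ is vacuous, since $B\not\in\I$ is then impossible).

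For the converse, assume $B\not\in\I$ and consider the set $B_3:=\{n\in B:b_n\geq 3\}$. If $B_3\not\in\I$, set $\supp(x):=B_3$ with $c_n:=\lfloor b_n/3\rfloor$ for $n\in B_3$ and $c_n:=0$ otherwise. Since $1\leq c_n\leq b_n/3\leq b_n-2$ for every $n\in B_3$, the representation~\eqref{ex-4} is legitimate, and an elementary check (minimum attained at $b_n=5$) gives $c_n/b_n\in[1/5,1/3]$ for every $n\in\supp(x)$. Proposition~\ref{Ghosh:Prop} applied with $m_1=1/5$, $m_2=1/3$ then yields $\bar x\notin t^{\I}_\uu(\T)$ at once.

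If instead $B_3\in\I$, then $B_2:=B\setminus B_3=\{n\in B:b_n=2\}\not\in\I$, and this is the main obstacle: Proposition~\ref{Ghosh:Prop} is not applicable, because the only admissible nonzero choice $c_n=1$ on $B_2$ forces the boundary value $c_n/b_n=1/2$. I bypass this by first thinning $B_2$ by parity, writing $B_2=(B_2\cap 2\N)\cup(B_2\cap(2\N+1))$; at least one of the two pieces lies outside $\I$, and by construction contains no two consecutive integers. Call this piece $A$, set $\supp(x):=A$ and $c_n:=1$ for $n\in A$ (the representation is legitimate because $\N\setminus A$ is infinite). For any $m\in\N$ with $m+1\in A$ one has $b_{m+1}=2$, and the absence of consecutive integers in $A$ forces the next element of $A$ above $m+1$ to lie at $\geq m+3$, which controls the geometric tail:
\[
\{u_mx\}=\tfrac{1}{2}+\sum_{k\in A,\ k\geq m+3}\frac{1}{b_{m+1}\cdots b_k}\leq\tfrac{1}{2}+\sum_{j\geq 3}\tfrac{1}{2^j}=\tfrac{3}{4}.
\]
Hence $\{u_mx\}\in[1/2,3/4]$ and $\|u_mx\|=1-\{u_mx\}\geq 1/4$. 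Since $\I$ is translation invariant and free, $A\not\in\I$ implies $(A-1)\cap\N\not\in\I$ (otherwise Lemma~\ref{ti1} would give $A\setminus\{0\}\in\I$ and hence $A\in\I$); therefore $\{m\in\N:\|u_mx\|\geq 1/4\}\not\in\I$, witnessing $\bar x\notin t^{\I}_\uu(\T)$. This direct Liouville-type tail estimate is what substitutes for the missing appeal to Proposition~\ref{Ghosh:Prop} in this second case.
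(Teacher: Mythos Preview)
Your proof is correct and takes a genuinely different route from the paper's. The paper argues by first finding either a $b$-bounded subset $A\subseteq B$ with $A\notin\I$ (then enumerating $A=\{n_1<n_2<\dots\}$, taking every other element to obtain $S$ with $(S+1)\cap S=\emptyset$, and invoking Theorem~\ref{Nuovo:Th} to conclude that the necessary condition $\ix$ fails) or, when no such subset exists, applying Lemma~\ref{Lemma2.8*} to extract a $b$-divergent $A\subseteq_\I B$ and then using Proposition~\ref{Ghosh:Prop} with $c_n=\lfloor b_n/3\rfloor$. Your decomposition is instead by the value of $b_n$: you apply Proposition~\ref{Ghosh:Prop} directly on $B_3=\{n\in B:b_n\geq 3\}$ when $B_3\notin\I$, and when $B_2=\{n\in B:b_n=2\}\notin\I$ you thin by parity and carry out a direct tail estimate showing $\|u_m x\|\geq 1/4$ on a translate of $A$.

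What this buys you is that your argument is entirely self-contained within the present paper: it avoids the appeal to Theorem~\ref{Nuovo:Th} (imported from~\cite{Nested}) and, more notably, avoids Lemma~\ref{Lemma2.8*}, whose hypotheses include that $\I$ be a $P$-ideal --- a hypothesis not assumed in the statement of Corollary~\ref{Corollary2.14}. Your route therefore works under exactly the stated assumption that $\I$ is merely a translation invariant free ideal. The paper's route, on the other hand, has the conceptual advantage of exhibiting the failure of $\bar x\in t^\I_\uu(\T)$ in the $b$-bounded case as a failure of the structural condition~$\ix$, tying the example back to the characterization machinery.
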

\begin{proof}
If $x \in [0,1)$ has $\supp(x) \subseteq B$ and $\bar x \notin t^{\I}_\uu(\T)$, then $\supp(x) \notin \I$ by Lemma \ref{Lemma2.2}, hence also  $B \notin \I$.

Now assume $B\not\in\I$ and write $B=\{n_1< n_2< \dots < n_k< \dots  \}$. If $B$ is $b$-bounded, let  $S=\{n_1<n_3< \dots < n_{2k-1}< \dots  \}$ and $T=\{n_0<n_2< \dots < n_{2k} < \dots \}$. It follows that $S \sqcup T=B$.  As $B \notin \I$, at least one of these two sets does not belong to $\I$. If $S \notin \I$, take any $x\in[0,1)$ with $\supp(x)=S$. Since $\I$ is translation invariant, $S+1 \notin \I$. As $(S+1) \cap S=\emptyset$, we have that $(S+1) \setminus S=S+1\not\in\I$, namely, $\ix$ does not hold. 
Hence, $\bar x \notin t^{\I}_\uu(\T)$ by Theorem~\ref{Nuovo:Th}.

If $B$ is not $b$-bounded and there exists $A \subseteq B$ such that $A\in\B_\uu\setminus\I$, we can argue as above replacing $B$ with $A$. If there does not exist such an $A$, by  Lemma \ref{Lemma2.8*}, there exists $A \subseteq_{\I} B$ (so $A\not\in\I$) with $A\in\D_\uu$.  Since $A$ is $b$-divergent, there exist $k \in \N$ such that $b_n>2$ for every $n>k,$ hence $\left\lfloor \frac{b_n}{3} \right\rfloor \neq 0$. Let $x\in[0,1)$ with $\supp(x)=A \cap \{n \in \N: n>k \}$ and $c_n=\left\lfloor \frac{b_n}{3} \right\rfloor$. 
Then there exist $m_1,m_2\in\R$ with $0<m_1 \leq m_2<1/2,$ such that $m_1 \leq \frac{c_n}{b_n} \leq m_2$ for every $n \in \supp(x)$ (for example $m_1=1/6$ and $m_2=5/12$), hence $\bar x \notin t^{\I}_\uu(\T)$ by Proposition \ref{Ghosh:Prop}.
\end{proof}

Moreover, the previous corollary has the following important consequence covering~\cite[Corollary 2.15]{Ghosh}.

\begin{corollary}
Let $\uu\in\A$. For any pair of translation invariant free ideals $\I_1, \I_2$ of $\N$, if $\I_2 \not \subseteq \I_1$, then $t^{\I_2}_\uu(\T) \not  \subseteq  t^{\I_1}_\uu(\T)$.
\end{corollary}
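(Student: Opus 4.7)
The plan is to reduce the statement directly to the preceding Corollary~\ref{Corollary2.14} together with Lemma~\ref{Lemma2.2}: these two facts together will produce a single witness $\bar x \in t_\uu^{\I_2}(\T) \setminus t_\uu^{\I_1}(\T)$.

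First I would fix an arbitrary $B \in \I_2 \setminus \I_1$, which exists by the hypothesis $\I_2 \not\subseteq \I_1$. Since $B \notin \I_1$ and $\I_1$ is a translation invariant free ideal of $\N$, Corollary~\ref{Corollary2.14} (applied with the ideal $\I_1$) yields an element $x \in [0,1)$ with $\supp(x) \subseteq B$ and $\bar x \notin t_\uu^{\I_1}(\T)$.

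It then remains to verify that $\bar x \in t_\uu^{\I_2}(\T)$. Since $\supp(x) \subseteq B$ and $B \in \I_2$, the ideal property gives $\supp(x) \in \I_2$. As $\I_2$ is translation invariant, every member of $\I_2$ (and in particular $\supp(x)$) is $\I_2$-translation invariant, so the hypotheses of Lemma~\ref{Lemma2.2} are met for the pair $(\uu,\I_2)$, yielding $\bar x \in t_\uu^{\I_2}(\T)$. Consequently $\bar x \in t_\uu^{\I_2}(\T) \setminus t_\uu^{\I_1}(\T)$, which is precisely the required non-inclusion. No real obstacle arises, since the two cited results do all the work; the only point that needs explicit checking is the applicability of Lemma~\ref{Lemma2.2}, which is immediate from the translation invariance of $\I_2$.
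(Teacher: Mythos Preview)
Your proof is correct and follows exactly the same route as the paper's own proof: pick $B \in \I_2 \setminus \I_1$, use Corollary~\ref{Corollary2.14} (with the ideal $\I_1$) to produce $x$ with $\supp(x)\subseteq B$ and $\bar x\notin t_\uu^{\I_1}(\T)$, and then invoke Lemma~\ref{Lemma2.2} (using $\supp(x)\in\I_2$) to conclude $\bar x\in t_\uu^{\I_2}(\T)$. Your explicit remark that the translation invariance of $\I_2$ ensures the applicability of Lemma~\ref{Lemma2.2} is a small clarification the paper leaves implicit.
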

\begin{proof}
Let $B \in \I_2 \setminus \I_1$. As $B \notin \I_1,$ by Corollary \ref{Corollary2.14} there exists $x \in [0,1)$  with $\supp(x) \subseteq B$ such that $\bar x \notin t^{\I_1}_\uu(\T).$ But $\I_2$ is an ideal, hence $\supp(x) \in \I_2.$ Therefore, by Lemma \ref{Lemma2.2}, $\bar x \in t^{\I_2}_\uu(\T)$ and then $t^{\I_2}_\uu(\T) \not  \subseteq  t^{\I_1}_\uu(\T)$.
\end{proof}

\section{Appendix - a proof extending the idea from \cite{DI}}\label{appendix}

In this appendix, we see that one can make the proof from \cite{Ghosh} work, but under a strong condition on the ideal, that rules out even the ideal $\I_d$ of statistical convergence. On the other hand, the proof in this section covers the one in \cite{DI}, as $\F in$ satisfies the stronger condition.

\subsection{The Claim}

One fundamental step in the proof of the sufficiency of Theorem~\ref{DiD} in~\cite{DiD,DI} is the counterpart of the following claim. In \cite{Ghosh} the author follows the same strategy as in \cite{DI} for the proof of his~\cite[Theorem~2.9]{Ghosh} (i.e., Theorem~\ref{Theorem2.9}), and~\cite[Claim~2.10]{Ghosh} is there the counterpart of the claim in~\cite{DI}. Here we rewrite \cite[Claim~2.10]{Ghosh} in our terms in Claim~\ref{Claim2.10}, and we apply it in the subsequent subsection in order to give a proof of the sufficiency in Theorem~\ref{conjecture} following the strategy from \cite{DiD,DI,Ghosh} under a suitable condition on the ideal that makes it work; then Theorem~\ref{Theorem2.9} follows from Theorem~\ref{conjecture} as explained in Remark~\ref{implies}.

For a more convenient use in the proof of the sufficiency, in the claim the necessary conditions $\ax$ and $\bx$ from Theorem~\ref{conjecture} are reformulated in a stronger iterated version. 

\begin{definition} Let $\uu\in\A$. For $A\subseteq \N$ call $\beta(A):= \sup\{k\in \N_+: \mbox{$S_k(A)\in\B_\uu$}\}$ {\em degree of $b$-boundedness of} $A$. If $\beta(A)=  \infty$ (i.e., when $S_k(A)\in\B_\uu$ for all $k\in \N$) we say that  $A$ is \emph{infinitely $b$-bounded}.
\end{definition}

\begin{claim}\label{Claim2.10} Let $\uu\in\mathcal A$, let $\I$ be a translation invariant $P$-ideal of $\N$ and $x \in[0,1)$ with $S=\supp(x)$ and $S_b=\supp_b(x)$. 
Let $A\in\P(\N)\setminus \I$ and let $x \in[0,1)$ such that $\ax$ and $\bx$ hold. If there exists $k\in\N$ such that $S_k(A)\in\B_\uu$, then the following holds.
\begin{itemize}
\item[(i)] If $A \subseteq^{\I} S$, then $S_k(A) \subseteq^{\I} S_b$ and there exists  $A' \subseteq_{\I} A$ such that $S_k(A') \subseteq S_b$, $A'+k+1\subseteq S$ and $\lim\limits_{n \in A'+k+1}\frac{c_n+1}{b_n}=1$. Moreover, if $\beta(A) =k$, and so $A+k+1\in\D_\uu$, then 
$$
\lim\limits_{n\in A'+k+1}\frac{c_n}{b_n}=\lim\limits_{n\in A'}\frac{c_{n+k+1}}{b_{n+k+1}}=1\quad \text{and}\quad\lim\limits_{n\in A}\frac{\{u_{n+k+1}x\}}{b_n\cdots b_{n+k+1}}=0.$$
\item[(ii)] If $A \cap S\in \I$, then $S_k(A) \cap S \in \I$ and there exists $B' \subseteq_{\I} A$ such that $S_k(B') \cap S = \emptyset$ and
$\lim\limits_{n \in B'}\frac{c_{n+k+1}}{b_{n+k+1}}=0$. Moreover, if $A+k+1\in\D_\uu$, then $\lim\limits_{n\in A}\frac{\{u_{n+k+1}x\}}{b_n\cdots b_{n+k+1}}=0.$
\end{itemize}
\end{claim}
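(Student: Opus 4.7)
The plan is to carry out an induction on the shift $j \in \{0, \ldots, k\}$, applying $\axuno$ (in part (i)) or $\axdue$ (in part (ii)) to the translated sets $A+j$ at each step. The crucial inputs are that $A+j \in \B_\uu$ for every $j \leq k$ (since $S_k(A) \in \B_\uu$ and $A+j \subseteq S_k(A)$), and that $A+j \not\in \I$ for every $j \leq k+1$ by translation invariance of $\I$.

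For part (i), assume $A \subseteq^\I S$. Starting from $A \subseteq^\I S$ and applying $\axuno$ to $A+j$ at step $j$, we obtain inductively $A+j \subseteq^\I S_b$ and $A+j+1 \subseteq^\I S$; after $k+1$ steps this yields $A+j \subseteq^\I S_b$ for $j=0,\ldots,k$ and $A+k+1 \subseteq^\I S$, and a finite union (Lemma~\ref{New:Lemma}(a)) gives $S_k(A) \subseteq^\I S_b$. To construct $A'$, I apply $\axuno$ once more to $A+k$ to obtain $F \subseteq_\I A+k$ with $F \subseteq S_b$ and $\lim\limits_{n \in F} \frac{c_{n+1}+1}{b_{n+1}} = 1$, and set
\[
A' := A \cap (F-k) \cap (S-k-1) \cap \bigcap_{j=0}^{k-1}(S_b - j).
\]
Lemma~\ref{New:Lemma} and translation invariance of $\I$ ensure each intersected set is $\I$-close to $A$, so $A' \subseteq_\I A$; the construction directly forces $A'+j \subseteq S_b$ for $j=0,\ldots,k$, $A'+k+1 \subseteq S$, and $A'+k \subseteq F$, from which the limit on $A'+k+1$ follows by reindexing $n \mapsto n-1$.

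For part (ii), assume $A \cap S \in \I$. The analogous iteration uses $\axdue$ but requires the auxiliary observation (cf.~Remark~\ref{implies}(i)): for every $B \in \B_\uu \setminus \I$ with $B \cap S \in \I$ and $B+1 \in \B_\uu$, the set $(B+1) \cap S$ lies in $\I$; indeed, $\axdue$ produces $B' \subseteq_\I B$ with $\lim\limits_{n \in B'} c_{n+1}/b_{n+1}=0$, and $b$-boundedness of $B'+1$ together with $c_{n+1} \in \{0, 1, \ldots, b_{n+1}-1\}$ forces $c_{n+1}=0$ for almost all $n \in B'$, hence $(B'+1) \cap S$ is finite and the translation invariance of $\I$ finishes the job. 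Iterating this yields $(A+j) \cap S \in \I$ for $j \leq k$, hence $S_k(A) \cap S \in \I$. To build $B'$, I apply $\axdue$ to $A+k$ to obtain $G \subseteq_\I A+k$ with $\lim\limits_{n \in G} c_{n+1}/b_{n+1}=0$, and set
\[
B' := (G-k) \setminus \bigcup_{j=0}^{k}(S-j).
\]
Translation invariance of $\I$ together with $(A+j)\cap S \in \I$ for $j\leq k$ yields $B' \subseteq_\I A$; $S_k(B') \cap S = \emptyset$ is immediate, and $B'+k \subseteq G$ gives the required limit via $n \mapsto n+k$.

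The ``moreover'' clauses in both parts reduce to the single observation that $A+k+1 \in \D_\uu$ implies $b_{n+k+1} \to \infty$ as $n \to \infty$ in $A$. For (i), combining this with $A'+k+1 \subseteq S$ (so $c_n \geq 1$ on $A'+k+1$) upgrades $\lim \frac{c_n+1}{b_n}=1$ to $\lim \frac{c_n}{b_n}=1$ on $A'+k+1$; for both (i) and (ii), the bound $\{u_{n+k+1}x\} < 1$ combined with $b_{n+k+1} \to \infty$ yields $\lim\limits_{n \in A} \frac{\{u_{n+k+1}x\}}{b_n \cdots b_{n+k+1}} = 0$. The main obstacle throughout is the bookkeeping of ``$\subseteq_\I$''-chains under shifts and finite intersections: no individual step is hard, but the various $\I$-small remainders must be tracked carefully using Lemma~\ref{New:Lemma}(a),(c),(d), and the auxiliary observation in the inductive step of (ii) (turning a limit statement into a membership statement via the integrality of $c_{n+1}$) is the subtlest point.
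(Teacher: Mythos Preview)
Your proposal is correct and follows essentially the same route as the paper's proof: an induction on the shift $j\in\{0,\ldots,k\}$ applying $\axuno$ (resp.\ $\axdue$) to $A+j$, then building $A'$ (resp.\ $B'$) as a finite intersection of $A$ with suitable translates of $S_b$, $S$ (resp.\ complements of translates of $S$) together with the set coming from the final application of $\axuno$ (resp.\ $\axdue$) to $A+k$, and handling the ``moreover'' parts via $b$-divergence of $A+k+1$. The only cosmetic difference is that the paper splits your single formula for $A'$ into two stages ($A''$ and $A^{(k+1)}$, then $A'=A''\cap A^{(k+1)}$), and in your ``moreover'' for (i) the remark that $c_n\geq 1$ on $A'+k+1$ is not actually needed---the upgrade from $\lim\frac{c_n+1}{b_n}=1$ to $\lim\frac{c_n}{b_n}=1$ uses only $\frac{1}{b_n}\to 0$.
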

\begin{proof}  (i) Assume that $A \subseteq^{\I} S$. Since $S_k(A)\in\B_\uu$, every $A+i\in\B_\uu$ for $i\in\{0,\ldots,k\}$. Since $A\in\B_\uu$, $\axuno$ implies that $A\subseteq^\I S_b$ and $A+1\subseteq^\I S$; being $A+1\in\B_\uu$ too, again $\axuno$ gives that $A+1\subseteq^{\I} S_b$ and $A+2\subseteq^\I S$.

Proceeding in this way until the step from $k$ to $k+1$ and using also Remark~\ref{moreover} in the last step, we find that:
\begin{itemize}
  \item[(1)] $A+i\subseteq^\I S_b$ for every $i\in\{0,\ldots,k\}$, and so $S_k(A)\subseteq^\I S_b$ by Lemma~\ref{New:Lemma}(a), and $A+k+1\subseteq^\I S$;
  \item[(2)] there exists $A^{(k+1)}\subseteq_\I A$, such that  $A^{(k+1)}+k\subseteq S_b$, $A^{(k+1)}+k+1\subseteq S$  and $\lim\limits_{n\in A^{(k+1)}+k+1}\frac{c_{n}+1}{b_n}=1$.
\end{itemize}

Next we ensure the existence of $A''\subseteq_\I A$ with $S_k(A'')\subseteq S_b$.   For every fixed $i\in\{0,\dots, k\}$ we have $A+ i \subseteq^\I S_b$ by (1), which means $A \subseteq^\I  S_b -i$ by Lemma~\ref{New:Lemma}(c). So Lemma~\ref{New:Lemma}(b) gives $A \cap (S_b-i) \subseteq_\I A$ for every $i\in\{0,\ldots, k\}$. Then Lemma~\ref{New:Lemma}(a) yields  $$ A'':= A \cap \bigcap_{i=0}^k (S_b - i) = \bigcap_{i=0}^k (A \cap (S_b - i))  \subseteq_\I A. $$
In particular, $A'' \subseteq  S_b-i$, i.e., $A'' + i \subseteq S_b$ for every $i\in\{0,\ldots, k\}$. Therefore, $S_k(A'') = \bigcup_{i=1}^k (A'' + i) \subseteq  S_b$.
Taking $A':= A'' \cap A^{(k+1)}$, we have $A' \subseteq_\I A$ by Lemma~\ref{New:Lemma}(a), $S_k(A')\subseteq  S_b$ as $A'\subseteq A''$, $A'+k+1\subseteq S$, and $\lim\limits_{n \in A'+k+1}\frac{c_n+1}{b_n}=1$ by (2) since $A''\subseteq A^{(k+1)}$, as desired. 
  
Now assume that $\beta(A)=k$. We have proved that $\lim\limits_{n \in A'+k+1}\frac{c_n+1}{b_n}=1$. Moreover,  as $A' \subseteq A$, we have that $A'+k+1 \subseteq A+k+1.$ Therefore, 
$$
1=\lim\limits_{n \in A'+k+1}\frac{c_n+1}{b_n}=\lim\limits_{n \in A'+k+1}\left(\frac{c_n}{b_n}+\frac{1}{b_n}\right)=\lim\limits_{n \in A'+k+1}\frac{c_n}{b_n}=\lim\limits_{n \in A'}\frac{c_{n+k+1}}{b_{n+k+1}},
$$ 
as $A'+k+1$ is $b$-divergent and so $1/b_n$ tends to 0. Moreover,
$$
\lim\limits_{n\in A}\frac{\{u_{n+k+1}x\}}{b_n\cdots b_{n+k+1}}\leq \lim\limits_{n\in A}\frac{\{u_{n+k+1}x\}}{ b_{n+k+1}}=0
$$
as $A'+k+1$ is $b$-divergent and $b_n \geq 2$ for every $n\in \N.$

(ii) Assume that $A \cap S \in \I$. By the second (``Moreover'') part of (a2) in Theorem~\ref{Theorem2.9} (which follows from $\axdue$ as explained in Remark~\ref{implies}), since $A+i\in\B_\uu$ for every $i\in\{0,\ldots,k\}$, one can deduce by induction, that $(A+i)\cap S\in\I$; then, $S_k(A)\cap S=\bigcup_{i=0}^k((A+i)\cap S\in\I$ as well, by definition of ideal. Now $\axdue$ applied to $A+k$ gives $C\subseteq A$ such that $C+k\subseteq_\I A+k$ (so $C\subseteq_\I A$ by Lemma~\ref{New:Lemma}(c)) and $\lim\limits_{n\in C}\frac{c_{n+k+1}}{b_{n+k+1}}=0$. 

For every $i \in \{0,\ldots,k\}$, let $B_i:= A \setminus(S-i)\subseteq_\I A$. Then Lemma~\ref{New:Lemma}(a) yields  $B'':=\bigcap_{i=0}^k B_i \subseteq_\I A$.
For every $j\in\{0,\ldots,k\}$, as $B_j\cap (S-j)=\emptyset$, it follows that $(B_j+j)\cap S=\emptyset$. Then, for every $j\in\{0,\ldots,k\}$, as $B''+j=\bigcap_{i=0}^k (B_i +j) \subseteq B_j+j$, so $(B''+j) \cap S=\emptyset$, and therefore $S_k(B'') \cap S=\emptyset$ as well. Let now $B':=C\cap B''$, so that $S_k(B') \subseteq S_k(B'')$, which ensures that $S_k(B') \cap S=\emptyset$ has the required properties. 

If $A+k+1\in\D_\uu$, then 
\[\lim\limits_{n\in A}\frac{\{u_{n+k+1}x\}}{b_n\cdots b_{n+k+1}}\leq \lim\limits_{n\in A}\frac{\{u_{n+k+1}x\}}{ b_{n+k+1}}=\lim\limits_{n\in A+k+1}\frac{\{u_{n}x\}}{ b_{n}}=0.\qedhere\]
\end{proof}

We apply the above claim a first time to get the following useful result.

\begin{corollary}\label{L2.2} 
Let $\uu\in\mathcal A$, let $\I$ be a translation invariant free $P$-ideal of $\N$ and $x \in [0,1)$ with $S=\supp(x)$, $S_b=\supp_b(x)$ and satisfying $\ax$. If $A\subseteq \N$ is infinitely $b$-bounded and $A \not \in \I$, then there exists an infinite $B\subseteq A$ such that $\lim\limits_{n \in B}\varphi(u_{n-1}x)=0$. 
\end{corollary}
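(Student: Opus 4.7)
The plan is to combine Remark~\ref{case3} with an iterated application of Claim~\ref{Claim2.10}, and then conclude by the telescoping estimates in Lemma~\ref{ug10}. Since $A\notin\I$ and $A=(A\cap S)\cup(A\setminus S)$, at least one of $A\cap S$, $A\setminus S$ is not in $\I$; replacing $A$ by whichever one it is, we may assume without loss of generality that $A\subseteq S$ or $A\cap S=\emptyset$. Since $S_k(A')\subseteq S_k(A)\in\B_\uu$ for every $A'\subseteq A$, this replacement preserves the property of being infinitely $b$-bounded, so for every $k\in\N$ the hypothesis $S_k(A)\in\B_\uu$ of Claim~\ref{Claim2.10} is satisfied. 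A look at its proof shows that only $\ax$ is actually used (not $\bx$), so our weaker assumption on $x$ suffices.

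For each $k\in\N$, Claim~\ref{Claim2.10} produces a set $A_k'\subseteq_\I A$ enjoying $S_k(A_k')\subseteq S_b$ in the case $A\subseteq S$ (by part (i)), or $S_k(A_k')\cap S=\emptyset$ in the case $A\cap S=\emptyset$ (by part (ii)). Set $A_k:=\bigcap_{j=0}^k A_j'$, which is again $\subseteq_\I A$ by Lemma~\ref{New:Lemma}(a), hence not in $\I$ (for if $A_k\in\I$ then $A=A_k\cup(A\setminus A_k)\in\I$, contradicting $A\not\in\I$). In particular each $A_k$ is infinite, the chain $(A_k)$ is decreasing, and $A_k\subseteq A_k'$ inherits the relevant inclusion $S_k(A_k)\subseteq S_b$ or $S_k(A_k)\cap S=\emptyset$. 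Choose a strictly increasing sequence $(n_k)$ with $n_k\in A_k$ and let $B:=\{n_k:k\in\N\}\subseteq A$; it is the desired infinite subset.

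It remains to verify $\|u_{n_k-1}x\|\to 0$. In the first case, $n_k+j\in S_b$ for every $j\in\{0,\ldots,k\}$, so Lemma~\ref{ug10}(b) gives $\sigma_{n_k,k}(x)\geq 1-2^{-(k+1)}$; combining with \eqref{eqn:ug4} and $0\leq\{u_{n_k+k}x\}<1$, one obtains $1-2^{-(k+1)}\leq\{u_{n_k-1}x\}<1$, whence $\|u_{n_k-1}x\|\leq 2^{-(k+1)}$. In the second case, $c_{n_k+j}=0$ for every $j\in\{0,\ldots,k\}$, so Lemma~\ref{ug10}(c) gives $\sigma_{n_k,k}(x)=0$; then \eqref{eqn:ug8}, together with $c_{n_k+k+1}<b_{n_k+k+1}$ and $\{u_{n_k+k+1}x\}<1$, yields $\{u_{n_k-1}x\}<2^{-(k+1)}+2^{-(k+2)}<2^{-k}$, hence $\|u_{n_k-1}x\|<2^{-k}$ for $k\geq 1$. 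In either case $\lim_{n\in B}\varphi(u_{n-1}x)=0$, as required. The conceptual work is already buried in Claim~\ref{Claim2.10}; the only new ingredient here is the observation that its $k$-th outputs can be intersected to produce a single decreasing chain $(A_k)$ that stays out of $\I$ and along which the uniform telescoping estimate applies, so no genuine obstacle arises.
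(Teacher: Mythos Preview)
Your argument is correct and runs on the same rails as the paper's: reduce via Remark~\ref{case3} to $A\subseteq S$ or $A\cap S=\emptyset$, invoke Claim~\ref{Claim2.10} (whose proof indeed uses only $\ax$), and finish with the telescoping estimates of Lemma~\ref{ug10}. The one substantive difference is in how the infinite set $B$ is actually produced. The paper fixes a single $\eps>0$, chooses $k$ with $2^{-(k+1)}<\eps$, applies Claim~\ref{Claim2.10} once to obtain $A'\subseteq_\I A$ (respectively $B'\subseteq_\I A$) along which $\|u_{n-1}x\|<\eps$ (respectively $<2\eps$), and then writes ``Therefore, $\lim_{n\in A'}\varphi(u_{n-1}x)=0$''. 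Strictly speaking this last step is not justified, since $A'$ depends on $k$ and hence on $\eps$; a single uniform bound $<\eps$ is not a limit.

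Your construction repairs exactly this point: you apply Claim~\ref{Claim2.10} for every $k$, intersect the outputs down to a decreasing chain $(A_k)$ that stays $\subseteq_\I A$ (hence out of $\I$, hence infinite), and then pick a diagonal sequence $n_k\in A_k$ along which the error is $\le 2^{-(k+1)}$ (case $A\subseteq S$) or $<2^{-k}$ (case $A\cap S=\emptyset$). This yields an honest infinite $B\subseteq A$ with $\lim_{n\in B}\varphi(u_{n-1}x)=0$. So the approaches coincide in their use of Claim~\ref{Claim2.10} and Lemma~\ref{ug10}, but your version supplies the missing diagonalization that the paper's proof leaves implicit.
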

\begin{proof} 
Let $\eps>0$ and $k \in \N$ such that $\frac{1}{2^{k+1}}<\eps$. 
By Remark~\ref{case3} we can consider only the following two cases. 

\smallskip   
(1) If $A \subseteq^{\I} S$, then by Claim~\ref{Claim2.10}(i),  there exists $A'\subseteq_\I A$ with $S_k(A')\subseteq S_b$. In view of Lemma~\ref{ug10}(b), for every $n \in A'$, $\sigma_{n,k}(x)\geq 1-\frac{1}{2^{k+1}}>1-\eps$.
This inequality and~\eqref{eqn:ug9} give $1-\eps<\sigma_{n,k}(x) \leq \{u_{n-1}x  \}<1$  for every $n \in A'$. 
Therefore, $\lim\limits_{n \in A'}\varphi(u_{n-1}x)=0$. 

\smallskip
(2) If $A \cap S \in \I$, then by Claim~\ref{Claim2.10}(ii), $S_k(A) \cap S \in \I$ and there exists $B' \subseteq_{\I} A$ such that $S_k(B') \cap S = \emptyset$ and 
$\lim\limits_{n \in B'}\frac{c_{n+k+1}}{b_{n+k+1}}=0$.
So,  $\sigma_{n,k}(x) =0$ for all $n \in B'$, in view of Lemma~\ref{ug10}(3), and $\frac{c_{n+k+1}}{b_{n+k+1}}< \eps$ for almost all $n \in B'$. Hence,~\eqref{eqn:ug9} implies that, for every $n \in B'$, 
$$\{u_{n-1}x  \} < \sigma_{n,k}(x)+\frac{c_{n+k+1}}{b_n b_{n+1} \cdots b_{n+k+1}}+\frac{1}{2^{k+2}} = \frac{c_{n+k+1}}{b_n b_{n+1} \cdots b_{n+k+1}}+\frac{1}{2^{k+2}} <\eps+\eps=2\eps.$$
Therefore, $\lim\limits_{n \in B'}\varphi(u_{n-1}x)=0$. 
\end{proof}

\subsection{A proof of the sufficiency with the strategy from \cite{DiD,DI,Ghosh}}
 
Let $\uu\in\mathcal A$ and let $\I$ be a free $P$-ideal of $\N$. For $A \in\P(\N)\setminus \I$, if $A\setminus A'\not\in\I$ for every $A'\in\B_\uu\cap\P(A)$, then there exists $B\in\D_\uu\cap\P(A)$; indeed, since $\emptyset = A\setminus A\in\I$, $A$ cannot be $b$-bounded, therefore it has a $b$-divergent subset $B$. Unfortunately, one cannot achieve the conclusion $B\not\in\I$, which is needed in the proof that we give in this appendix of the sufficiency of the conditions $\ax$ and $\bx$ in the main theorem. This is why we introduce the following class of ideals: 
 
\begin{definition} 
Fixed $\uu\in\A$, a proper free ideal $\I$ of $\N$, satisfies:
\begin{itemize}
   \item[$\DLu$]  for every $A\in\P(\N)\setminus\I$, if $A\setminus A'\notin\I$ for every $A'\in\B_\uu\cap\P(A)$, then there exists $B\in(\D_\uu\cap\P(A))\setminus\I$.
\end{itemize}
Moreover, we say that $\I$ satisfies $\DL$ if $\I$ satisfies $\DLu$ for every $\uu\in\A$. 
\end{definition}

\begin{remark}\label{pseudo}
Let $\I$ be a free $P$-ideal of $\N$. Assume that $A\in\P(\N)\setminus\I$ and 
$$A=A^{(0)}{}_\I\supseteq A^{(1)}{}_\I\supseteq A^{(2)}{}_\I\supseteq\ldots{}_\I\supseteq A^{(n)}{}_\I\supseteq\ldots.$$
Let $D\in\I$ be a pseudounion of $\{D_n:n\in\N\}$, where $D_n=A\setminus A^{(n)}\in\I$, that is, $D_n\subseteq^*D$ for every $n\in\N$. Let $B:=A\setminus D\subseteq_\I A$, as $A\setminus B\subseteq D\in\I$. Moreover, $B\subseteq^*A\setminus D_n=A^{(n)}$ for every $n\in\N$, namely, 
$B\subseteq_\I A$ is a  \emph{pseudointersection of} $\{A^{(n)}:n\in\N\}$.  Note that $B \not \in \I$, in particular, $B$ is infinite.
\end{remark}

\begin{proposition}\label{Lemma:June2}  Let $\uu\in\A$ and let $\I$ be a translation invariant free $P$-ideal of $\N$. Then $\I$ satisfies $\DLu$ if and only if for every $A\in\P(\N)\setminus\I$, if every $A'\subseteq_\I A$ has $\beta(A')<\infty$, then there exist $k\in\N$ and $B\subseteq A$ such that $B+k$ is $b$-divergent and $B\not\in\I$.
\end{proposition}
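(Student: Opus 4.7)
The plan is to prove the two implications of the biconditional separately; both rely essentially on the $P$-ideal property of $\I$ via Remark~\ref{pseudo}.

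For $\DLu\Rightarrow$(P), let $A\notin\I$ satisfy the condition that every $A'\subseteq_\I A$ has $\beta(A')<\infty$. If no $A'\subseteq_\I A$ is in $\B_\uu$, then $\DLu$ applies directly to $A$ and gives $B\in\D_\uu\cap\P(A)\setminus\I$, so the required conclusion holds with $k=0$. Otherwise, pick $A_0\subseteq_\I A$ with $A_0\in\B_\uu$ and set $k_0:=\beta(A_0)<\infty$; then $A_0+j\in\B_\uu$ for $j=0,\ldots,k_0$ but $A_0+k_0+1\notin\B_\uu$, and the latter is not in $\I$ by translation invariance. The key claim is that the hypothesis of $\DLu$ holds for $A_0+k_0+1$: otherwise some $C\subseteq_\I A_0+k_0+1$ lies in $\B_\uu$, and setting $D:=C-(k_0+1)\subseteq_\I A_0\subseteq_\I A$ one checks that $D+j\subseteq A_0+j\in\B_\uu$ for $j=0,\ldots,k_0$ and $D+k_0+1=C\in\B_\uu$, so $S_{k_0+1}(D)\in\B_\uu$ and $\beta(D)\geq k_0+1$. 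Iterating this construction produces a $\subseteq_\I$-descending sequence $A_0\supseteq_\I A_1\supseteq_\I\cdots$ with $\beta(A_n)\to\infty$, and Remark~\ref{pseudo} gives a pseudointersection $A_\infty\subseteq_\I A$ with $A_\infty\subseteq^* A_n$ for every $n$; since $S_k(A_\infty)\subseteq^* S_k(A_n)\in\B_\uu$ for $k\leq\beta(A_n)$, we obtain $\beta(A_\infty)=\infty$, contradicting the standing hypothesis. Hence the iteration terminates at some stage $n$, $\DLu$ applies to $A_n+\beta(A_n)+1$ and yields $C^*\in\D_\uu\cap\P(A_n+\beta(A_n)+1)\setminus\I$; setting $B:=C^*-(\beta(A_n)+1)\subseteq A_n\subseteq A$ gives $B\notin\I$ and $B+(\beta(A_n)+1)=C^*\in\D_\uu$, as required with $k:=\beta(A_n)+1$.

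For (P)$\Rightarrow\DLu$, let $A\notin\I$ satisfy the hypothesis of $\DLu$: no $A'\subseteq_\I A$ lies in $\B_\uu$. This implies the hypothesis of (P), since $\beta(A')=\infty$ would force $A'\subseteq S_1(A')\in\B_\uu$, hence $A'\in\B_\uu$. Applying (P) to $A$ yields $k_0\in\N$ and $B_0\subseteq A$ with $B_0+k_0\in\D_\uu$ and $B_0\notin\I$; if $k_0=0$ we are done. Otherwise, consider the $b$-bounded sets $B_{0,j}:=\{n\in B_0: b_n\leq j\}\in\B_\uu$ for $j\in\N_+$. If every $B_{0,j}\in\I$, the $P$-ideal property (as in the proof of Lemma~\ref{Lemma2.8*}) yields $C\in\I$ with $B_{0,j}\subseteq^* C$ for every $j$; then $B:=B_0\setminus C\subseteq A$ satisfies $B\notin\I$, and $b_n\to\infty$ along $B$ since each $B_{0,j}\cap B$ is finite, so $B\in\D_\uu\cap\P(A)\setminus\I$ as required. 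If instead some $B_{0,j_0}\notin\I$, the hypothesis of $\DLu$ on $A$ forces $A\setminus B_{0,j_0}\notin\I$, and one checks that $A\setminus B_{0,j_0}$ itself satisfies the same hypothesis: any $b$-bounded $A'\subseteq_\I A\setminus B_{0,j_0}$ would make $A'\cup B_{0,j_0}\subseteq_\I A$ lie in $\B_\uu$, a contradiction. A recursive application of (P) to $A\setminus B_{0,j_0}$, glued together via the $P$-ideal property at the limiting step, either falls into the first subcase (producing the required $B$) or contradicts the hypothesis of $\DLu$ at the pseudointersection.

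The main obstacle is the $(\Leftarrow)$ direction when the $k$ provided by (P) is positive: the $b$-divergence information lies on a shift $A+k$ rather than on $A$, and transferring it to $A$ requires a careful recursive elimination of $b$-bounded non-$\I$ subsets, exploiting the stability of the hypothesis of $\DLu$ under such removals together with the $P$-ideal property to control the limit. The $(\Rightarrow)$ direction, by contrast, is cleaner: each step of failed $\DLu$-applicability produces a subset of $A$ with strictly larger $\beta$, and the standing finiteness of $\beta$ on subsets mod $\I$ forces termination via Remark~\ref{pseudo}.
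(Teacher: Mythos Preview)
Your argument for $\DLu\Rightarrow$(P) is correct and shares the key idea with the paper's---obtain, via the $P$-ideal property, a set $A_\infty\subseteq_\I A$ with $\beta(A_\infty)=\infty$, contradicting the standing hypothesis---though the paper organizes it more economically through the contrapositive: assuming the conclusion of (P) fails for $A$ (i.e., for every $k$, every $B\subseteq A$ with $B+k$ $b$-divergent lies in $\I$), one applies the contrapositive of $\DLu$ directly to each shift $A+k$ to obtain $A^{(k)}\subseteq_\I A$ with $A^{(k)}+k\in\B_\uu$, and then takes a single pseudointersection via Remark~\ref{pseudo}. Your iterative ``$\beta$-increasing'' scheme reaches the same endpoint with more bookkeeping.

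There is, however, a genuine gap in your (P)$\Rightarrow\DLu$ direction. After one step you remove a $b$-bounded set $B_{0,j_0}\notin\I$ and propose to recurse on $A\setminus B_{0,j_0}$, invoking the $P$-ideal property ``at the limiting step''. But the sets you are removing are \emph{not} in $\I$, so Remark~\ref{pseudo} does not apply: the resulting chain $A=A^{(0)}\supseteq A^{(1)}\supseteq\cdots$ is not $\subseteq_\I$-descending, there is no reason any pseudointersection should lie outside $\I$ (indeed $\bigcap_n A^{(n)}$ may well be empty), and you give no mechanism forcing the recursion to terminate. Nor can a contradiction with the hypothesis of $\DLu$ arise at a limit stage, since---as you yourself correctly observe---that hypothesis is preserved under each finite removal. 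The paper, for its part, simply declares this direction ``clear'' and gives no argument; whatever one makes of that, your recursive scheme as written does not close.
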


\begin{proof} Clearly, the new condition implies $\DLu$. So assume $\DLu$, that in counter-positive form means that for every $A\in\P(\N)\setminus\I$, if $\D_\uu\cap\P(A)\subseteq\I$, then there exists $A'\in\B_\uu$ with $A' \subseteq_\I  A$. Fix arbitrarily $A\in\P(\N)\setminus\I$. To conclude, we aim to prove that  for every $k \in \N$ each $B \subseteq A$ such that $B+k$ is $b$-divergent satisfies $B \in \I$, then there exists $A^{(\infty)}\subseteq_\I A$ with $\beta(A^{(\infty)}) = \infty$. 

Fix also $k\in \N$ and assume that every $B\subseteq A$ such that $B+k$ is $b$-divergent satisfies also $B\in\I$. Put $A'= A+k\not\in\I$. Then for $A'$ consider a subset $B'\subseteq A'$ such that $B'$ is $b$-divergent. Clearly, for $B:=B' - k \subseteq A = A'- k$ the set $B+k$ is $b$-divergent, so our hypothesis gives $B \in \I$. As $\I$ is translation invariant, $B' = B+k \in\I$. By $\DLu$ applied to $A'$, there exists $A'' \subseteq_\I  A'$ such that $A''$ is $b$-bounded. Since $\I$ is translation invariant, 
$A^{(k)} = A'' - k \subseteq_\I A$ and $A^{(k)} + k$ is $b$-bounded. By Remark~\ref{pseudo}, there exists a pseudointersection $A^{(\infty)}\subseteq_\I A$  of $\{A^{(s)}:s\in\N\}$, i.e., $A^{(\infty)}\subseteq^* A^{(s)}$ (so, $A^{(\infty)}+s\subseteq^* A^{(s)}+s$) for every $s\in\N$. Since, $A^{(s)}+s$ is $b$-bounded for every $s\in\N$, this yields that $A^{(\infty)}+s$ is $b$-bounded for every $s\in\N$, i.e., $\beta(A^{(\infty)}) = \infty$. 
\end{proof}

\begin{theorem}[Sufficiency under $\DLu$]\label{SuffiDL} Let $\uu\in\mathcal A$, let $\I$ be a translation invariant $P$-ideal of $\N$ and $x \in[0,1)$ with $S=\supp(x)$ and $S_b=\supp_b(x)$. If $\I$ satisfies $\DLu$ and $\ax\&\bx$ holds, then $\bar x\in t_\uu^\I(\T)$.
\end{theorem}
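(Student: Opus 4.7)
The overall plan is to verify, via Lemma~\ref{Lemma2.6}, that for every $A\in\P(\N)\setminus\I$ there is an infinite $B\subseteq A$ with $\lim\limits_{n\in B}\varphi(u_{n-1}x)=0$; combined with translation invariance of $\I$ this yields $\bar x\in t_\uu^\I(\T)$. Fix such an $A$. The pivotal observation is that $\DLu$, read as a contrapositive, says: \emph{if $\D_\uu\cap\P(C)\subseteq\I$ then $C\in\I^{bd}$} -- the dual of Lemma~\ref{Lemma2.8*}. In the easy case $A\notin\I^{bd}$, $\DLu$ directly yields $B\in\D_\uu\cap\P(A)\setminus\I$, and Lemma~\ref{L2.3} supplies the required infinite $B'\subseteq B\subseteq A$. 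Assume henceforth $A\in\I^{bd}$, and fix $A^{(0)}\subseteq_\I A$ with $A^{(0)}\in\B_\uu$.

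The core of the argument is an inductive process. Given $A^{(0)}\supseteq_\I\dots\supseteq_\I A^{(k)}$, all outside $\I$, with $S_k(A^{(k)})\in\B_\uu$, test whether $\D_\uu\cap\P(A^{(k)}+k+1)\subseteq\I$. In the \emph{terminating} subcase some $C\in\D_\uu\cap\P(A^{(k)}+k+1)\setminus\I$ exists: set $B_0:=C-(k+1)$; translation invariance of $\I$ gives $B_0\subseteq_\I A^{(k)}$, $B_0\notin\I$, $S_k(B_0)\in\B_\uu$, $B_0+k+1\in\D_\uu$ and $\beta(B_0)=k$, and the process halts. In the \emph{continuing} subcase, the contrapositive of $\DLu$ applied to $A^{(k)}+k+1$ furnishes $A'\subseteq_\I A^{(k)}+k+1$ with $A'\in\B_\uu$; setting $A^{(k+1)}:=A'-(k+1)$ gives $A^{(k+1)}\subseteq_\I A^{(k)}$, $A^{(k+1)}\notin\I$, $A^{(k+1)}+k+1\in\B_\uu$ and $S_{k+1}(A^{(k+1)})\in\B_\uu$ (a finite union of $b$-bounded sets), so the induction continues.

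If the process never terminates, the $P$-ideal property (Remark~\ref{pseudo}) produces a pseudointersection $A^{(\infty)}\subseteq_\I A$, $A^{(\infty)}\notin\I$, with $A^{(\infty)}\subseteq^*A^{(k)}$ for every $k$; the containments $A^{(\infty)}+j\subseteq^*A^{(k)}+j\in\B_\uu$ for $j\le k$ force $\beta(A^{(\infty)})=\infty$, and Corollary~\ref{L2.2} delivers the required $B$. If the process halts at stage $k$ with $B_0$, apply Remark~\ref{case3} to replace $B_0$ by $B_0\cap S$ or $B_0\setminus S$ (whichever lies outside $\I$); the properties $S_k(B_0)\in\B_\uu$ and $B_0+k+1\in\D_\uu$ are inherited by infinite subsets, so one may assume $B_0\subseteq^\I S$ or $B_0\cap S=\emptyset$. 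Claim~\ref{Claim2.10}, whose ``Moreover'' clause applies because $\beta(B_0)=k$, then produces either $A'\subseteq_\I B_0$ with $S_k(A')\subseteq S_b$ and $\lim\limits_{n\in A'}\frac{c_{n+k+1}}{b_{n+k+1}}=1$, or $B'\subseteq_\I B_0$ with $S_k(B')\cap S=\emptyset$ and $\lim\limits_{n\in B'}\frac{c_{n+k+1}}{b_{n+k+1}}=0$, together with $\lim\limits_{n\in B_0}\frac{\{u_{n+k+1}x\}}{b_n\cdots b_{n+k+1}}=0$.

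To conclude, I combine these limits with~\eqref{eqn:ug8} and Lemma~\ref{ug10}(b) or (c). In the first case $\sigma_{n,k}(x)=1-\frac{1}{b_n\cdots b_{n+k}}$ and one computes
$$\{u_{n-1}x\}=1-\frac{1}{b_n\cdots b_{n+k}}\left(1-\frac{c_{n+k+1}+\{u_{n+k+1}x\}}{b_{n+k+1}}\right)\longrightarrow 1$$
along $A'$, since $b_n\cdots b_{n+k}$ is uniformly bounded on $A'$ (as $S_k(A')\in\B_\uu$) while the bracketed factor tends to $0$; hence $\varphi(u_{n-1}x)\to 0$. In the second case $\sigma_{n,k}(x)=0$ and $\{u_{n-1}x\}=\frac{c_{n+k+1}+\{u_{n+k+1}x\}}{b_n\cdots b_{n+k+1}}\to 0$ along $B'$, again giving $\varphi(u_{n-1}x)\to 0$. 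The principal obstacle is producing $B_0$ with \emph{both} $S_k(B_0)\in\B_\uu$ \emph{and} $B_0+k+1\in\D_\uu$ in the terminating case; this simultaneous control at every lower shift is precisely what the iterated use of $\DLu$ -- as the dual of Lemma~\ref{Lemma2.8*} -- secures, plugging the gap identified in Remark~\ref{implies*}.
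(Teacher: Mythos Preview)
Your proof is correct and follows essentially the same route as the paper's: both reduce via Lemma~\ref{Lemma2.6} to finding, inside each $A\notin\I$, an infinite subset along which $\varphi(u_{n-1}x)\to 0$, then handle the $b$-divergent case via Lemma~\ref{L2.3}, the infinitely $b$-bounded case via Corollary~\ref{L2.2}, and the intermediate case via Claim~\ref{Claim2.10} combined with~\eqref{eqn:ug8}. The only substantive difference is organizational: the paper packages the dichotomy ``either some $A'\subseteq_\I A$ has $\beta(A')=\infty$, or some $B\subseteq A$, $B\notin\I$, has $B+k+1\in\D_\uu$'' into Proposition~\ref{Lemma:June2} and then cites it, whereas you carry out that inductive construction inline (your chain $A^{(0)}\supseteq_\I A^{(1)}\supseteq_\I\cdots$ and its pseudointersection are exactly the proof of Proposition~\ref{Lemma:June2}). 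A minor imprecision: in the terminating case you write ``$B_0\subseteq_\I A^{(k)}$'', but you only have (and only need) $B_0\subseteq A^{(k)}$; the $\notin\I$ conclusion for $B_0$ comes from translation invariance applied to $C\notin\I$.
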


\begin{proof}  To prove that $\bar x \in t_\uu^{\I}(\T)$, by Lemma~\ref{Lemma2.6} it is sufficient to show that for every $A \in\P(\N)\setminus \I$ there exists an infinite $B' \subseteq A$ such that $\lim\limits_{n \in B'}\varphi(u_{n-1}x)=0$, and this is what we verify in each of the cases below.  Fix $A \in\P(\N)\setminus \I$.

{\bf (i)} If there exists $C\in\B_\uu\cap\P(A)\setminus\I$, then we can assume without loss of generality that $A\in\B_\uu$. 

{\bf (i1)} If there exists $A'\subseteq_\I A$ with $\beta(A') = \infty$, then  $\lim\limits_{n \in B}\varphi(u_{n-1}x)=0$ for some infinite $B\subseteq A'$, by Corollary~\ref{L2.2}. 

{\bf (i2)} Assume that (i1) fails, i.e.,  $\beta(A') <\infty$ for every $A'\subseteq_\I A$. By Proposition~\ref{Lemma:June2}, there exist $k\in \N$ and  $B \subseteq A$ such that $B+k+1\in\D_\uu$ and $B \not \in \I$. Then,
\begin{equation} \label{eqn:ug12}
\lim\limits_{n \in B}\frac{\{u_{n+k+1}x\}}{b_nb_{n+1} \cdots b_{n+k+1}} \leq \lim\limits_{n \in B}\frac{1}{b_n b_{n+1} \cdots b_{n+k+1}}\leq \lim\limits_{n \in B}\frac{1}{ b_{n+k+1}}=0.
\end{equation}
From~\eqref{eqn:ug8} and~\eqref{eqn:ug12} it follows that
\begin{equation}\label{eqn:ug13}
\lim\limits_{n \in B}\{u_{n-1}x\}=\lim\limits_{n \in B}\sigma_{n,k}(x)+\lim\limits_{n \in B}\frac{c_{n+k+1}}{b_n b_{n+1} \cdots b_{n+k+1}}.
\end{equation}

In view of Remark~\ref{case3} it suffices to consider only the following two subcases.

{\bf (i2.1)}  Assume that $A \subseteq^{\I} S$, so $B \subseteq^{\I} S$ as well. In view of Claim~\ref{Claim2.10}(i) applied to the $b$-bounded set $B \not \in \I$ for which $B+k+1$ is $b$-divergent, there exists $B' \subseteq_{\I} B$ with $S_k(B')\subseteq S_b$ and $\lim\limits_{n \in B'+k+1}\frac{c_n}{b_n}=1$, so~\eqref{eqn:ug13} and~\eqref{eqn:ug10} give  $\lim\limits_{n \in B'}\varphi(u_{n-1}x)=0$, as
\begin{equation*}
\lim\limits_{n \in B'}\{u_{n-1}x\}=\lim\limits_{n \in B'}\left(\sigma_{n,k}(x)+\frac{c_{n+k+1}}{b_nb_{n+1} \cdots b_{n+k+1}}\right)
=\lim\limits_{n \in B'}\left(1+ \frac{1}{b_n b_{n+1} \cdots b_{n+k}}  \left(\frac{c_{n+k+1}}{ b_{n+k+1}}-1  \right) \right)=1.
\end{equation*}

{\bf (i2.2)} If $A \cap S \in \I$, then by  Claim~\ref{Claim2.10}(ii) there exists $B'\subseteq A$ such that $\sigma_{n,k}(x)=0$ for every $n \in B'$ and $\lim\limits_{n \in B'}\frac{c_{n+k+1}}{b_{n+k+1}}=0$. Therefore, Claim~\ref{Claim2.10}(ii) and \eqref{eqn:ug13} imply that $\lim\limits_{n \in B'}\varphi(u_{n-1}x)=0$, since 
$$\lim\limits_{n \in B'}\{u_{n-1}x\}=\lim\limits_{n \in B'}\left( \sigma_{n,k}(x)+\frac{c_{n+k+1}}{b_n b_{n+1} \cdots b_{n+k+1}} \right) = 
\lim\limits_{n \in B'}  \frac{c_{n+k+1}}{b_n b_{n+1} \cdots b_{n+k+1}}  \leq \lim\limits_{n \in B'}\frac{c_{n+k+1}}{ b_{n+k+1}}=0.$$

{\bf (ii)} If $\B_\uu\cap\P(A)\subseteq\I$, by Lemma~\ref{Lemma2.8*}, there exists $B\in\D_\uu$ with $B\subseteq_\I A$. Then Lemma~\ref{L2.3} gives
 $B' \subseteq_\I B$ such that $\lim\limits_{n \in B'}\varphi(u_{n-1}x)=0$. 

\smallskip
Resuming, we proved in (i) and (ii) that for any $A\in\P(\N)\setminus\I$, there exists an infinite set $B' \subseteq A$ such that $\lim\limits_{n \in B'}\varphi(u_{n-1}x)=0$. This implies $\bar x \in t_\uu^{\I}(\T)$ by Lemma~\ref{Lemma2.6}. 
\end{proof}

 In the following lemma  (used in Example \ref{Laaaaast:Example}) we find a condition  equivalent to $\DL$ that does not involve any sequence. 
  
\begin{lemma}\label{lemma2}
A proper free ideal $\I$ of $\N$ satisfies $\DL$ if and only if for any increasing sequence $(A_n)$ in $\P(\N)$ with $A_n\uparrow A$ such that $A\setminus A_n\notin\I$ for every $n\in\N$, there exists a pseudounion $U$ of $(A_n)$, such that $U^*\subseteq A$ and $U^* \not \in \I$ (equivalently, there exists $B\subseteq A$ such that $B\notin \I$ and $B\cap A_n$ is finite for all $n\in\N$).
\end{lemma}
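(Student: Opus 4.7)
The plan is to set up a correspondence between increasing filtrations $A_n\uparrow A$ in $\P(\N)$ and arithmetic sequences $\uu\in\A$ so that the hypothesis of $\DLu$ is precisely the hypothesis in the stated equivalent formulation, while $b$-divergence translates into having finite intersection with every $A_n$. Once such a correspondence is available, both implications read off from it, and the pseudounion-vs-$B$ equivalence is just complementation: writing $B := U^*$, the condition $A_n\subseteq^* U$ becomes ``$B\cap A_n$ finite'', $U^*\subseteq A$ becomes $B\subseteq A$, and $U^*\notin\I$ becomes $B\notin\I$.

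For $(\Rightarrow)$, given an increasing $(A_n)$ with $A_n\uparrow A$ and $A\setminus A_n\notin\I$ for every $n$ (so $A\notin\I$ by downward closure), I would assume without loss of generality that $0\notin A$ (since $\I$ is free) and build $\uu\in\A$ by setting $n(m):=\min\{n\in\N : m\in A_n\}$ for $m\in A$ and
\[
b_m := \begin{cases} n(m)+2, & m\in A,\\ 2, & m\notin A,\end{cases}
\qquad u_0:=1,\quad u_m:=b_1\cdots b_m.
\]
The shift by $2$ ensures $b_m\geq 2$, so $\uu\in\A$. The key identity I would verify is
\[
\B_\uu \cap \P(A) \;=\; \bigcup_{N\in\N} \P(A_N),
\]
because $(b_m)$ is bounded on $A'\subseteq A$ iff $n(\cdot)$ is bounded on $A'$ iff $A'\subseteq A_N$ for some $N$. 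Combined with downward closure of $\I$, this collapses ``$A\setminus A'\notin\I$ for every $A'\in\B_\uu\cap\P(A)$'' to the given hypothesis ``$A\setminus A_N\notin\I$ for every $N$''. Applying $\DLu$ then delivers $B\in(\D_\uu\cap\P(A))\setminus\I$; $b$-divergence of $B$ says $n(m)\to\infty$ on $B$, which is exactly the desired finiteness of $B\cap A_n$ for every $n$.

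For $(\Leftarrow)$, I would fix $\uu\in\A$ and $A\in\P(\N)\setminus\I$ satisfying the hypothesis of $\DLu$, and take the natural filtration $A_n:=\{m\in A : b_m\leq n\}$. It is increasing with $\bigcup_n A_n=A$, and every $A_n$ lies in $\B_\uu\cap\P(A)$, forcing $A\setminus A_n\notin\I$ for all $n$. The stated condition then yields $B\subseteq A$ with $B\notin\I$ and $B\cap A_n$ finite for every $n$; this is precisely $b_m\to\infty$ on $B$, so $B\in(\D_\uu\cap\P(A))\setminus\I$, proving $\DLu$.

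The only delicate point will be pinning down the constants so that the constructed $\uu$ is genuinely arithmetic (hence the shift by $2$, and the separate treatment of $m\notin A$ and of the element $m=0$); no substantive obstacle is expected beyond this routine bookkeeping.
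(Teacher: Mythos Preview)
Your proof is correct and follows essentially the same approach as the paper's: in both directions the translation between filtrations and arithmetic sequences is the same (levels of the filtration become the ratios $b_m$, and conversely $A_n=\{m\in A:b_m\le n\}$), and $b$-divergence corresponds to finite intersection with every $A_n$. The only differences are cosmetic bookkeeping (you use $b_m=n(m)+2$, the paper uses $b_k=n$ for $k\in A_n\setminus A_{n-1}$ with ad hoc handling of $n\le 1$), and your normalization $0\notin A$ versus the paper's $A_0=\emptyset$ with $(A_n)$ strictly increasing.
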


\begin{proof} Assume that $\I$ satisfies $\DL$, and let $(A_n)$ in $\P(\N)$ be an increasing sequence with $A_n\uparrow A$ such that $A\setminus A_n\notin\I$ for every $n\in\N$. We can assume without loss of generality that $A_0=\emptyset$ and the sequence $(A_n)$ is strictly increasing.  Define $b_k=n$ for $k\in A_n\setminus A_{n-1}$ and for $n\geq 2$, and $b_k=2$ for $k\in\N\setminus A$ and $k\in A_1$; moreover, let $u_n=\prod_{k\leq n}b_k$ for every $n\in\N$ and $\uu=(u_n)$. If $A'\subseteq A$ is $b$-bounded, then $A'\subseteq A_m$ for some $m\in\N$. Therefore, $A\setminus A'\supseteq A\setminus A_m$, moreover $A\setminus A_m\notin\I$; thus $A\setminus A'\notin \I$. By $\DLu$, there exists a $b$-divergent set $B\subseteq A$ with $B\notin\I$. Then $B\cap A_n$ is finite for all $n\in\N$.
Hence, $A_n\subseteq ^*B^*$ for all $n\in\N$ implies that $U=B^*$ is a pseudounion of $(A_n)$.

Vice versa, let $\uu\in\mathcal A$ and $A\in\P(\N)\setminus\I$ such that $A\setminus A'\notin\I$ for every $b$-bounded $A'\subseteq A$. Moreover, let $A_n=\{k\in A:b_k\leq n\}$ for every $n\in\N$.  By hypothesis, there exists $B\subseteq A$ such that $B\notin \I$ and $B\cap A_n$ is finite for all $n\in\N$. This $B$ is $b$-divergent and $B\notin \I$ as required in $\DLu$.
\end{proof}

\begin{example}\label{Laaaaast:Example}
(a) $\F in$ satisfies $\DL$. Indeed, let $(A_n)$ be an increasing sequence in $\N$ and $A_n\uparrow A$ such that $A\setminus A_n\notin\I$ for every $n\in\N$.
We can assume without loss of generality $(A_n)$ is strictly increasing. For every $n\in\N$, let $b_n\in A_{n+1}\setminus A_{n}$. Then $B=\{b_n:n\in\N\}$ is infinite with $B\cap A_n$ finite for all $n\in\N$. So, we apply Lemma~\ref{lemma2}.

\smallskip
(b) The density ideal $\I_d$ does not satisfy $\DL$. According to  Lemma \ref{lemma2}, it suffices to verify that there exists an increasing sequence $A_n\subseteq\N$ with  $A_n\uparrow A$ and $A\setminus A_n\notin\I_d$ for every $n\in\N$. For $n\in \N$ let $B_n=2^n\N$ and $A_n=B_n^*$. 
Then $A_n\uparrow\N$ and $d(A_n^*) = d(B_n) >0$ (so $A_n^*\not \in \I_d$) for every $n\in \N$. Let $A=\N$ and let $U$ be a pseudounion of $(A_n)$. 
Then $A_n\subseteq^* U$, therefore $U^*\subseteq^* A_n^*=B_n$, and so $d(U^*)\leq d(B_n)\rightarrow 0$. Hence, $d(U^*)=0$, i.e.,  $U^*\in\I_d$.
\end{example}

\end{document}